\documentclass[11pt]{amsart}

\topmargin 0in
\oddsidemargin 0.2in
\evensidemargin 0.2in
\textwidth 6.3in
\textheight 8.27in

\usepackage[english]{babel}
\usepackage{booktabs}
\usepackage{graphicx}
\usepackage{sidecap}
\usepackage[hypertexnames=false]{hyperref}
\hypersetup{
	colorlinks,
	linkcolor={red!50!black},
	citecolor={blue!50!black},
	urlcolor={blue!80!black}
}
\usepackage{amssymb,amsmath,amsthm}
\usepackage{mathtools}
\usepackage{accents}
\usepackage{dsfont}
\usepackage{diagbox,multirow}
\usepackage{hhline}
\usepackage[dvipsnames,x11names]{xcolor}
\usepackage{array}
\usepackage{colortbl}
\usepackage{subfig}
\usepackage{enumerate}
\usepackage[shortlabels]{enumitem}
\usepackage{makecell}
\usepackage{float}
\usepackage{framed}
\usepackage{varwidth}
\usepackage[export]{adjustbox}
\usepackage{xparse}
\usepackage {tikz}
\usetikzlibrary{cd}
\usetikzlibrary{arrows,decorations.pathmorphing,backgrounds,positioning,fit,matrix}
\usetikzlibrary{decorations.markings}

\newlength\mylen
\tikzset{
	bicolor/.style 2 args={
		dashed,dash pattern=on 20pt off 20pt,-,#1,
		postaction={draw,dashed,dash pattern=on 20pt off 20pt,-,#2,dash phase=20pt}
	},
}

\usepackage{etoolbox}
\apptocmd{\sloppy}{\hbadness 10000\relax}{}{}
\usepackage{amsbsy}

\usepackage[normalem]{ulem}
\newcommand\redsout{\bgroup\markoverwith{\textcolor{red}{\rule[0.5ex]{2pt}{0.4pt}}}\ULon}

%%%%%%%%%%%%%% Notes %%%%%%%%%%%%%%%%%%%%%%%%%%%%%%%
\usepackage{todonotes}
%%%%%%%%%%%%%%%%%%%%%%%%%%%%%%%%%%%%%%%%%%%%%%%%%%%%

\newtheorem{theorem}{Theorem}
\newtheorem{lemma}{Lemma}[section]
\newtheorem{thm}[lemma]{Theorem}
\newtheorem{proposition}[lemma]{Proposition}
\newtheorem{corollary}[lemma]{Corollary}
%\newtheorem{proposition}[theorem]{Proposition}
%\newtheorem{lemma}[theorem]{Lemma}
%\newtheorem{remark}[theorem]{Remark}
%\newtheorem{corollary}[theorem]{Corollary}
%\newtheorem{example}[theorem]{Example}
%\newtheorem*{question}{Question}
%\numberwithin{equation}{section}
%\numberwithin{theorem}{section}

\theoremstyle{definition}
\newtheorem{definition}[lemma]{Definition}
\newtheorem*{example}{Example}

\theoremstyle{remark}
\newtheorem{remark}{Remark}

\numberwithin{equation}{section}
%\numberwithin{theorem}{section}
%\numberwithin{definition}{section}
%\numberwithin{proposition}{section}
%\numberwithin{corollary}{section}
%\numberwithin{lemma}{section}

%\newtheoremstyle{named}{}{}{\itshape}{}{\bfseries}{.}{.5em}{\thmnote{#3 }#1}
%\theoremstyle{named}
%\newtheorem*{namedtheorem}{Theorem}

%\newtheorem{problem}[theorem]{Problem}

%\newtheorem{assumptions}[theorem]{Assumptions}
%\newtheorem{condition}[theorem]{Condition}

%\numberwithin{equation}{section}
%\numberwithin{theorem}{section}

\newcommand{\Hm}[1]{\leavevmode{\marginpar{\tiny%
			$\hbox to 0mm{\hspace*{-0.5mm}$\leftarrow$\hss}%
			\vcenter{\vrule depth 0.1mm height 0.1mm width \the\marginparwidth}%
			\hbox to 0mm{\hss$\rightarrow$\hspace*{-0.5mm}}$\\\relax\raggedright
			#1}}}

\newcommand{\N}{\mathbb{N}}
\newcommand{\R}{\mathbb{R}}

\colorlet{red2}{red!65!black}
\definecolor{azure}{rgb}{0.0, 0.5, 1.0}
\definecolor{darkpastelgreen}{rgb}{0.01, 0.75, 0.24}
\definecolor{lightgreen}{rgb}{0.56, 0.93, 0.56}
\definecolor{lightgray}{rgb}{0.83, 0.83, 0.83}
\definecolor{gray}{rgb}{0.5, 0.5, 0.5}
\definecolor{darkspringgreen}{rgb}{0.09, 0.45, 0.27}

\DeclareMathSymbol{\shortminus}{\mathbin}{AMSa}{"39}

\newcommand{\vertiii}[1]{{\left\vert\kern-0.25ex\left\vert\kern-0.25ex\left\vert #1
		\right\vert\kern-0.25ex\right\vert\kern-0.25ex\right\vert}}

\newcommand{\dom}{\textnormal{dom}}

\newcommand{\Deg}{\operatorname{Deg}}

\newcommand{\Lmin}{\mathcal{L}_{n}}

\newcommand{\Deltadir}{\Delta_{\textnormal{dir}}}
\newcommand{\Deltadirn}{\Delta_{\textnormal{dir},n}}

\makeatletter
\newcommand{\sbullet}{%
	\hbox{\fontfamily{lmr}\fontsize{.6\dimexpr(\f@size pt)}{0}\selectfont\textbullet}}
\DeclareRobustCommand{\mathbullet}{\accentset{\sbullet}}
\makeatother

%%%%%%%%%%%%%%%%% Bibliography %%%%%%%%%%%%%%%%%%%%%%%%%%%%%%%%%%%%%%%%%%%%%
\usepackage[style=numeric,backend=bibtex,giveninits=true,maxbibnames=99,sortcites=true]{biblatex}
\addbibresource{PME-FDEgraphs.bib}
%%%%%%%%%%%%%%%%%%%%%%%%%%%%%%%%%%%%%%%%%%%%%%%%%%%%%%%%%%%%%%%%%%%%%%%%%%%%

%%%%%%%%%%%%%%%%%%%%%%%%%%%%%%%%%%%%%%%%%%%%%%%%%%%%%%%%%%%%%%%%%%%%%%%%%%%%

\title[The generalized porous medium equation on graphs]{The generalized porous medium equation on graphs: existence and uniqueness of solutions with $\ell^1$ data}
%\date{}
\author {Davide Bianchi}
  \address{
School of Science\\
Harbin Institute of Technology (Shenzhen)\\
Shenzhen (China)}
\email{bianchi@hit.edu.cn}

\author{Alberto G. Setti}
\address{Dipartimento di Scienze e Alta Tecnologia\\
	Universit\`a dell'Insubria\\
	Como (Italy)}
\email{alberto.setti@uninsubria.it}

\author{Rados{\l}aw K. Wojciechowski}
\address{Department of Mathematics and Computer Science
	\\
	York College -- CUNY \\ Jamaica (USA)}
\address{Department of Mathematics
	\\
	Graduate Center -- CUNY \\ New York (USA)\\
	}
	
\email{rwojciechowski@gc.cuny.edu}
\setcounter{tocdepth}{2}

\keywords{Generalized porous medium equation; graphs; complex networks; filtration equation; nonlinear diffusion equation;  existence and uniqueness of solutions.}
\subjclass[2020]{35K55; 35A01; 35A02; 76S05; 05C22; 05C63; 47H06.}

%%%%%%
% Data sharing not applicable to this article as no datasets were generated or analysed during the current study.

\begin{document}	
%%%%%%%%%%%%%%%%%%%%%%%%%%%%%%%%%%%%%%%%%%%%%%%%%%%%%%%%%%%
\begin{abstract}
We study solutions of the generalized porous medium equation on infinite graphs. For nonnegative or nonpositive integrable data, we prove the existence and uniqueness of mild solutions on any graph. For changing sign integrable data, we show existence and uniqueness under extra assumptions such as local finiteness or a uniform lower bound on the node measure.
\end{abstract}
%%%%%%%%%%%%%%%%%%%%%%%%%%%%%%%%%%%%%%%%%%%%%%%%%%%%%%%%%%%
\maketitle %

%%%%%%%%%%%%%%%%%%%%%%%%%%%%%%%%%%%
\section{Introduction}
%%%%%%%%%%%%%%%%%%%%%%%%%%%%%%%%%%
Our model formal equation is the following:
	\begin{equation}\tag*{GPME}\label{model_problem}
	\partial_t u(t,x) + \Delta\Phi u(t,x) = f(t,x) \quad  \mbox{for every } (t,x) \in (a,b)\times X.
	\end{equation}
This equation is called the \emph{generalized porous medium equation} (GPME) or \emph{filtration equation} whenever $\Delta$ is the Laplace operator and $\Phi$ is the canonical extension to a function space of a map $\phi \colon  \R \to \R$ such that $\phi$ is  strictly monotone increasing, $\phi(\R)=\R$ and $\phi(0)=0$. If $\phi(s)= s^m \coloneqq s|s|^{m-1}$, then the above equation is known as  the \emph{porous medium equation} (PME) when $m>1$ and the \emph{fast diffusion equation} (FDE) when $0<m<1$. Clearly, when $m=1$ and $f\equiv0$ we recover the classic heat equation.

The \ref{model_problem} has a long story and we invite the interested reader to look at the seminal book by J. L. V\'{a}zquez \cite{vazquez2007porous} for a detailed and exhaustive account. In recent years, research interest about properties of solutions of the \ref{model_problem} has focused on the Riemannian setting as can be seen by the increasing number of related works, see, for example, \cite{bonforte2008fast,lu2009local,grillo2014radial,vazquez2015fundamental,grillo2016smoothing,grillo2017porous,grillo2018porous,grillo2018porous2,bianchi2018laplacian,grillo2019blow,grillo2020nonlinear,grillo2021fast,grillo2021global,meglioli2021blow,dipierro2021global} and references therein for an overview of the most significant developments.

In contrast, in the graph setting there are still relatively few results for the \ref{model_problem}. This is despite the fact that, on the one hand, the \ref{model_problem} is being used as a model equation for several  real-world phenomena (e.g., the flow of gas through a porous medium, water infiltration or population dynamics) and, on the other hand, graphs are ubiquitous in many applied fields: in physics \cite{nakanishi1971graph}, biology \cite{lesne2006complex,stam2007graph,lieberman2005evolutionary}, image and signal processing \cite{shuman2013emerging,ta2010nonlocal,elmoataz2015p}, engineering \cite{deo2016graph}, etc.

 To make our setting more precise, let us fix a graph $G=(X,w,\kappa,\mu)$ where $X$ is a countable node set, $w \colon X \times X \to [0,\infty)$ is a symmetric map with zero diagonal, $\kappa \colon X \to [0, \infty)$ is a possibly nontrivial killing term and $\mu \colon X \to (0, \infty)$ is a strictly positive node measure on $X$.

 For notational convenience, let us fix $a=0$ and $b=T\in (0,\infty]$. We will focus our attention on the following Cauchy problem posed on $G$:
\begin{equation}\tag*{Cauchy-GPME}\label{Model_Equation_graph}
\begin{cases}
\partial_t u(t,x) + \Delta\Phi u(t,x) = f(t,x) & \mbox{for every } (t,x) \in (0,T)\times X,\\
\lim_{t\to 0^+} u(t,x) = u_0(x) & \mbox{for every } x\in X
\end{cases}
\end{equation}
where $f \colon (0,T)\times X \to \R$ and $u_0 \colon X \to \R$ are generic functions at the moment. In this setting, $\Delta$ represents the (formal) graph Laplacian operator defined by the formula
$$
\Delta u(x) \coloneqq \frac{1}{\mu(x)}\sum_{y \in X}w(x,y)\left(u(x) - u(y)\right) + \frac{\kappa(x)}{\mu(x)}u(x).
$$
 The \ref{model_problem} on graphs belongs to the broader class of nonlinear diffusion equations with nonconstant diffusion since the edge-weight function $w(\cdot,\cdot)$ can be seen as a counterpart of the nonconstant diffusion coefficients $\{a_{i,j}(x)\}_{i,j=1}^d$ which characterize the second-order differential operator $\sum_{i=1}^d \partial_i \left( a_{i,j}(x)\sum_{j=1}^d\partial_ju(x)\right)$ acting on smooth functions on $\R^d$.

 We now give a brief overview of some recent results concerning nonlinear equations in the graph setting.
 For the counterpart of the Kazdan-Warner equation, see \cite{grigoryan2016kazdan,
keller2018kazdan, liu2020multiple}.
Concerning the existence and uniqueness of solutions for reaction-diffusion type equations on the lattice $\mathbb{Z}$, see \cite{slavik2019well,stehlik2017exponential}.
 For the Yamabe and other equations, see \cite{grigoryan2016yamabe,
grigoryan2017existence, lin2021heat}. For parabolic equations involving the $p$-Laplacian, see \cite{mugnolo2013parabolic,hua2015time}.
Finally, we mention some results concerning the existence and nonexistence of global nonnegative solutions of an abstract semilinear heat equation given in \cite{lin2017existence, lin2018blow-up, wu2021blow-up} which were recently extended to a general setting in \cite{lenz2021blowup}.

 With reference to the PME in the discrete setting, we highlight \cite{erbar2014gradient} where the authors study the (finite) discrete analogue of the Wasserstein gradient flow structure for the PME in $\R^n$. Concerning the existence and uniqueness of solutions of the \ref{Model_Equation_graph} to the best of our knowledge there is an almost complete lack of a systematic treatment even in the case of finite graphs with one notable exception: In \cite[Corollary 5.4]{mugnolo2013parabolic}, exploiting an interesting link between the PME and the $p$-heat equation (which is well-known in $\R$, see, e.g., \cite[Section 3.4.3]{vazquez2007porous}), it is shown that if $G$ is an infinite tree, uniformly locally finite with $\mu\equiv 1$, then there exists a unique solution of the \ref{Model_Equation_graph} for $\phi(s)=s|s|^{m-1}$ for any $u_0 \in \ell^2(X,\mu)$ and $f\equiv0$. For more details and the definition of solutions in that setting we invite the interested reader to look at the mentioned paper.

Our approach is different. The main goal of this article is to prove existence and uniqueness results for classes of solutions of the \ref{Model_Equation_graph} problem under the weakest possible hypotheses on the graph $G$, on the initial datum $u_0$ and on the forcing term $f$. To achieve this, we will borrow techniques from the theory of semigroups on Banach spaces and, as it will become clear later, $\ell^1(X,\mu)$  will turn out to be the ideal space for our considerations.

We will consider three classes of solutions: \emph{mild} (Definition \ref{def:weak_solution}), \emph{strong} (Definition \ref{def:strong_solution}) and \emph{classic} (Definition \ref{def:classical_solution}) which are characterized by an increasing  ``regularity.'' In particular, mild solutions $u$ are limits of $\epsilon$-approximations $u_\epsilon$ that satisfy the \ref{Model_Equation_graph} for a time discretization well-adapted to $f$.
If the operator $\mathcal{L}\coloneqq\Delta\Phi$ is $m$-accretive, then it is possible to immediately infer the existence and uniqueness of mild solutions for the  \ref{Model_Equation_graph} problem by appealing to well-known results, see \cite{benilan1972equations,benilan1988evolution,crandall1971generation}. Thus, the bulk of our work consists of establishing the $m$-accretivity of (a restriction of) the operator $\mathcal{L}$ on an appropriate Banach space.

Accretivity of an operator $\mathcal{L}$ with respect to a norm $\|\cdot\|$ on a real Banach space $\mathfrak{E}=(E,\|\cdot\|)$ means that
$$
\left\|(u - v) + \lambda \left(\mathcal{L}u - \mathcal{L}v\right) \right\|\geq \|u - v\|
$$
for every $u,v \in \dom\left(\mathcal{L}\right)\subseteq E$ and for every $\lambda >0$. Furthermore, $m$-accretivity means that $\mathcal{L}$ is accretive and $\operatorname{id} +\lambda\mathcal{L}$ is surjective for every $\lambda>0$. We note that accretivity implies that $\operatorname{id} +\lambda\mathcal{L}$ is injective, thus, $m$-accretivity gives that $\operatorname{id} +\lambda\mathcal{L}$ is bijective.
For a more detailed introduction to the concepts of accretivity and $m$-accretivity, see Subsection \ref{ssec:m-accretivity}.

As can be seen directly, the accretivity property depends on both the operator $\mathcal{L}$ and on the underlying Banach space. For example, the graph Laplacian $\Delta$ on $\ell^p(X,\mu)$ is $m$-accretive on any finite graph for $p \in [1,\infty)$, see Proposition \ref{lem:m-accretivity_for_finite_graphs}. On the other hand, the nonlinear operator $\mathcal{L}$ can fail to be accretive with respect to the $\ell^2$-norm, see Example~\ref{ex:1}. What is crucial for our analysis is that the restriction of $\mathcal{L}$ to a suitable dense subset of 
$\ell^1(X,\mu)$ will be shown to be accretive for any graph. For $m$-accretivity to hold some additional hypothesis are required as will be discussed in what follows. We note that there is a parallel development concerning the surjectivity of the formal graph Laplacian $\Delta$ which is always surjective on infinite, locally finite graphs but not necessarily surjective in the not locally finite case, see \cite{ceccherini2012surjectivity,koberstein2020note}.

For a complete introduction to the notation we refer to Section \ref{sec:preliminaries}. We denote by $\ell^{1,+}(X,\mu)$ and $\ell^{1,-}(X,\mu)$ the cones of nonnegative and nonpositive integrable functions, respectively, and by $\mathcal{L}$ the operator
\begin{align*}
		&\mathcal{L} \colon \dom\left( \mathcal{L} \right)\subseteq \ell^{1}\left(X,\mu\right) \to \ell^{1}\left(X,\mu\right) , \\
		&\dom\left( \mathcal{L} \right)\coloneqq\left\{ u \in  \ell^{1}\left(X,\mu\right) \mid  \Phi u\in \dom\left(\Delta\right), \Delta\Phi u \in  \ell^{1}\left(X,\mu\right)  \right\}
\end{align*}
whose action is given by
$$
\mathcal{L}u\coloneqq \Delta\Phi u.
$$
For a subset $\Omega \subseteq \dom\left( \mathcal{L} \right)$, we write $\mathcal{L}_{|\Omega}$
for the restriction of $\mathcal{L}$ to $\Omega$.

We now state the main results, whose proofs can be found in Section \ref{ssec:proofs}.
The first main result discusses the accretivity and $m$-accretivity of $\mathcal{L}$.

%\begin{customthm}{1}\label{thm:main1}
\begin{theorem}\label{thm:main1}
	Let $G=(X,w,\kappa,\mu)$ be a graph. Then, there exists a dense subset $\Omega \subseteq \dom(\mathcal{L})$ such that $\mathcal{L}_{|\Omega}$ is accretive.  Moreover, for every $\lambda >0$ and for every $g \in \ell^{1,\pm}(X,\mu)$ there exists a unique $u \in \ell^{1,\pm}(X,\mu)\cap\Omega$ such that
\begin{equation*}\label{eq:main_equation}
	\left( \operatorname{id} + \lambda\mathcal{L}\right)u = g.
\end{equation*}
If one of the following extra hypotheses holds:
	\begin{enumerate}[label={\upshape(\bfseries H\arabic*)},wide = 0pt, leftmargin = 3em]
		%\item\label{m-accretivity_0}
		\item\label{m-accretivity_A}$G$ is locally finite;
		\item\label{m-accretivity_B} $\inf_{x \in X}\mu(x)>0$;
		\item\label{m-accretivity_C} $\sup_{x \in X}\frac{\sum_{y \in X}w(x,y)}{\mu(x)}<\infty$ and $\Phi\left(\ell^1(X,\mu)\right)\subseteq \ell^1(X,\mu)$;
	\end{enumerate}
then $\operatorname{id}+\lambda\mathcal{L}$ restricted to $\Omega$ is also surjective.  
%onto $\ell^{1}(X,\mu)$} and the solution belongs to $\Omega$. 
In particular, $\mathcal{L}_{|\Omega}$ is $m$-accretive. 
Moreover, in all cases, the solution $u$ satisfies the 
contractivity estimate  $$||u||\leq ||g||.$$ 
\end{theorem}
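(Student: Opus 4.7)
The plan is to reduce accretivity of $\mathcal L_{|\Omega}$ to a Kato-type inequality for $\Delta$, to establish the range condition for sign-definite data by a finite-exhaustion/monotone-limit scheme, and to leverage (H1)--(H3) only for sign-changing data. For accretivity, I would take $\Omega$ to consist of those $u\in\dom(\mathcal L)$ with enough summability to validate all term-wise rearrangements (containing at least $C_c(X)\subseteq\dom(\mathcal L)$, dense in $\ell^1(X,\mu)$ thanks to $\phi(0)=0$). For $u,v\in\Omega$, one sets $w:=\Phi u-\Phi v$; strict monotonicity of $\phi$ gives $\operatorname{sgn}(u-v)=\operatorname{sgn}(w)$ pointwise, so that with $\sigma\in\operatorname{sgn}(u-v)$, $|\sigma|\le 1$, testing $(u-v)+\lambda(\Delta w)$ against $\sigma\mu$ and symmetrizing
\begin{equation*}
\sum_{x,y\in X} w(x,y)\,\sigma(x)\bigl(w(x)-w(y)\bigr)=\tfrac12\sum_{x,y}w(x,y)\bigl(\sigma(x)-\sigma(y)\bigr)\bigl(w(x)-w(y)\bigr)\ge 0,
\end{equation*}
combined with $\kappa\sigma w\ge 0$, yields $\|(u-v)+\lambda(\mathcal Lu-\mathcal Lv)\|\ge\|u-v\|$.

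For the range condition with $g\in\ell^{1,+}(X,\mu)$, I would exhaust $X$ by finite sets $X_n\uparrow X$. On each induced finite subgraph $G_n$ (collecting exterior edge-weights into an enlarged killing $\kappa_n$), Proposition \ref{lem:m-accretivity_for_finite_graphs} together with the bijectivity of $\Phi$ would produce a unique $u_n\ge 0$ solving $u_n+\lambda\mathcal L_n u_n=g\chi_{X_n}$. A discrete maximum principle shows that $(u_n)$, extended by zero, is monotone nondecreasing, while summing the equation against $\mu$ on $X_n$---and using the favorable sign of the boundary and killing contributions---gives $\|u_n\|_1\le\|g\|_1$. Monotone convergence delivers $u=\lim_n u_n\in\ell^{1,+}$ with $\|u\|_1\le\|g\|_1$, and $\Delta\Phi u=(g-u)/\lambda$ holds pointwise by dominated convergence inside each edge-sum. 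Uniqueness is immediate from accretivity. The case $g\le 0$ reduces to the nonnegative case by applying the argument to the admissible $\tilde\phi(s):=-\phi(-s)$.

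To upgrade surjectivity onto all of $\ell^1$ under (H1), (H2), or (H3), I would approximate a general $g\in\ell^1$ by bounded, compactly supported $g_k\to g$; each $g_k$ should admit a solution $u_k$ via an exhaustion analogous to the second step, but using the $\ell^1$-contractivity of finite-graph $m$-accretivity in place of monotonicity. Accretivity then forces $\|u_k-u_j\|\le\|g_k-g_j\|$, so $u:=\lim u_k$ exists in $\ell^1$. Each extra hypothesis ensures that the limit $u$ actually lies in $\dom(\mathcal L)$: (H1) local finiteness makes $\Delta$ pointwise well-defined on $\ell^\infty$; (H2) $\inf_x\mu(x)>0$ upgrades $\ell^1$-convergence to $\ell^\infty$-convergence, giving uniform pointwise control of $\Phi u_k$; (H3) controls $\Delta\Phi u$ in $\ell^1$ directly via the bounded weighted degree together with $\Phi(\ell^1)\subseteq\ell^1$. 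In all cases the contractivity $\|u\|\le\|g\|$ is the accretivity inequality with $v=0$, legitimate since $\Phi(0)=0$ forces $\mathcal L 0=0$. The main obstacle will be this last step: the three hypotheses exclude different pathologies (unbounded degree, vanishing measure, loss of $\ell^1$ under $\Phi$), so producing a limit $u$ genuinely in $\dom(\mathcal L)$---rather than merely in the graph closure of $\mathcal L$---demands parallel but distinct arguments under each, unified by a careful truncation/domination scheme tied to the accretivity estimate.
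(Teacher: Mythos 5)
Your overall architecture --- accretivity via a sign/Green's-identity computation, existence by exhaustion with monotone limits for sign-definite data, and \ref{m-accretivity_A}--\ref{m-accretivity_C} used only to place the limit in $\dom(\mathcal L)$ for sign-changing data --- is the same as the paper's, but three steps as written would fail.

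First, the finite-dimensional solvability. You claim that Proposition \ref{lem:m-accretivity_for_finite_graphs} ``together with the bijectivity of $\Phi$'' produces a solution of $u_n+\lambda\Delta_n\Phi u_n=g\chi_{X_n}$ on each finite subgraph. Surjectivity of $\operatorname{id}+\lambda\Delta_n$ and bijectivity of $\Phi$ do not compose to give surjectivity of $\operatorname{id}+\lambda\Delta_n\Phi$: the equation to solve is $(\Psi+\lambda\Delta_n)v=g$ with $v=\Phi u$, which is neither of the two pieces you know how to invert (non-composability of such properties is the whole point of the paper; cf.\ the example at the end of Subsection \ref{ssec:m-accretivity}). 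The paper solves the finite problem by invoking a theorem of Willson on equations $Mv=g$ where $M$ is the sum of a diagonal, strictly monotone, surjective map and a diagonally dominant matrix; some such finite-dimensional monotone-operator or degree argument is indispensable and is absent from your proposal.

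Second, for sign-changing data the exhaustion does not converge by contractivity alone. The finite-graph solutions $u_n$ solve equations on \emph{different} Dirichlet subgraphs, so accretivity of any single operator never compares $u_n$ with $u_{n+1}$, and the bound $\|u_n\|\le\|g\|$ gives no compactness in $\ell^1$. The paper's Step II-2 extracts a pointwise limit by sandwiching $u^-\le u_n\le u^+$ between the monotone solutions for $g^\pm$ and then using a diagonal argument plus dominated convergence; your ``$\ell^1$-contractivity in place of monotonicity'' offers no substitute for this. Third, and relatedly, your $\Omega$ is specified only as ``functions with enough summability to justify the rearrangements.'' Accretivity on $\Omega$ yields uniqueness and $\|u\|\le\|g\|$ only for solutions that actually lie in $\Omega$, and nothing in your construction guarantees that the limit functions you build satisfy an a priori condition such as $\sum_{x,y}w(x,y)\left|\Phi u(x)-\Phi u(y)\right|<\infty$. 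The paper instead defines $\Omega$ as the set of $u\in\dom(\mathcal L)$ approximable by finitely supported $u_n$ with $\mathcal L_nu_n\to\mathcal Lu$ for the Dirichlet-subgraph operators $\mathcal L_n$, precisely so that the constructed solutions can be shown to land in $\Omega$; see Remark \ref{rem:Omega_Lmin} for why the naive choice fails.
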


The second main result uses general theory along with the $m$-accretivity established in the first result to yield existence and uniqueness of mild solutions for the \ref{Model_Equation_graph}. 
For this, we consider two cases, namely, when the initial data is nonnegative or nonpositive
and when the initial data changes sign. In the second case, we need to add one of the extra 
hypotheses appearing in the first result above to guarantee existence.
For the definitions of the various types of solutions for the \ref{Model_Equation_graph} and the connections
between them, see Section~\ref{sec:Cauchy_model_problem}.
%\begin{customthm}{2}\label{thm:main2}
\begin{theorem}\label{thm:main2}
	Let $G=(X,w,\kappa,\mu)$ be a graph. Let
	\begin{enumerate}[i)]
		\item\label{hp1} $u_0 \in \ell^1(X,\mu)$;
		\item\label{hp2} $f \in L^1_{\textnormal{loc}}\left([0,T]; \ell^1\left(X,\mu\right)\right)$.
	\end{enumerate}
	If one of the following additional conditions holds:
	\begin{enumerate}[a)]
		\item\label{item:nonnegativity/nonpositivity} $u_0,f(t)\geq 0$ (or $\leq 0$) for all $t\geq 0$;
		\item\label{hpA} $u_0$ or $f(t)$ changes sign and at least one of  \ref{m-accretivity_A}, \ref{m-accretivity_B} or \ref{m-accretivity_C} is satisfied;
	%	\item\label{hpB}  $u_0$ or $f(t)$ may change sign and it holds \ref{m-accretivity_B};
	%	\item\label{hpC} $u_0$ or $f(t)$ may change sign and it holds \ref{m-accretivity_C};
	\end{enumerate}
	then there exists a unique mild solution $u$ of the \ref{Model_Equation_graph}.
	
	Furthermore, $u(t) \in \ell^1(X,\mu)$ for all $t \in [0,T]$ and for every $\epsilon >0$ there exists a continuous function $\delta \colon [0,\infty) \to [0,\infty)$ such that $\delta(0)=0$ and if $u_\epsilon$ is an $\epsilon$-approximate solution of the \ref{Model_Equation_graph}, then
	\begin{equation}\label{uniform_limit}
		\| u(t) - u_\epsilon(t)\|\leq \delta(\epsilon) \qquad \mbox{for } t \in [0,T-\epsilon].
	\end{equation}	
	Moreover, for any pair $(u_0,f),(\hat{u}_0,\hat{f})$, the corresponding mild solutions $u,\hat{u} \in C\left([0,T];  \ell^1\left(X,\mu\right)\right)$ satisfy
	\begin{equation}\label{contraction_of_solutions}
		\left\| u(t_2) - \hat{u}(t_2)  \right\| \leq \left\| u(t_1) - \hat{u}(t_1)  \right\|   + \int_{t_1}^{t_2}\left\| f(s) - \hat{f}(s) \right\|\, ds, \quad \forall \, 0\leq t_1<t_2 \leq T.
	\end{equation}
	Finally, under hypothesis \ref{item:nonnegativity/nonpositivity}, $u(t)\geq 0$ (or $\leq 0$) for every $t\geq 0$.
%\end{customthm}
\end{theorem}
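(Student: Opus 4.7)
The plan is to reduce Theorem \ref{thm:main2} to the abstract theory of mild solutions for evolution equations driven by $m$-accretive operators on Banach spaces, as developed in \cite{benilan1972equations,benilan1988evolution,crandall1971generation}. Theorem \ref{thm:main1} does the heavy lifting: it produces a dense subset $\Omega \subseteq \dom(\mathcal{L})$ on which $\mathcal{L}$ is accretive, is $m$-accretive on $\ell^1(X,\mu)$ under any of \ref{m-accretivity_A}--\ref{m-accretivity_C}, and in all cases has resolvent $J_\lambda \coloneqq (\operatorname{id}+\lambda\mathcal{L}_{|\Omega})^{-1}$ sending the cone $\ell^{1,\pm}(X,\mu)$ back into $\ell^{1,\pm}(X,\mu)\cap\Omega$ nonexpansively. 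Once these building blocks are in place, the convergence of $\epsilon$-approximate solutions, the $L^1$-contraction, and the preservation of sign will all drop out of the standard machinery.

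For case \ref{hpA}, the argument is a direct invocation: $\mathcal{L}_{|\Omega}$ is $m$-accretive on $\ell^1(X,\mu)$ with dense domain, $u_0 \in \ell^1(X,\mu)$ and $f \in L^1_{\textnormal{loc}}([0,T];\ell^1(X,\mu))$, so the B\'enilan--Crandall--Liggett generation theorem yields a unique mild solution $u \in C([0,T];\ell^1(X,\mu))$ arising as the uniform limit of $\epsilon$-approximate solutions with a modulus $\delta(\epsilon)$ depending only on $\|u_0\|$ and on the $L^1$-norm of $f$. This is exactly \eqref{uniform_limit}, and the classical $L^1$-contraction estimate for evolutions governed by accretive operators gives \eqref{contraction_of_solutions}.

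For case \ref{item:nonnegativity/nonpositivity}, one instead works inside the closed convex cone $\ell^{1,\pm}(X,\mu)$, since $\mathcal{L}_{|\Omega}$ need not be $m$-accretive on all of $\ell^1(X,\mu)$. The plan is to run the Crandall--Liggett implicit time discretization entirely inside the cone: for a partition $0 = t_0 < t_1 < \cdots$ of mesh $\leq \epsilon$ and averages $f_k$ of $f$ over $[t_{k-1},t_k]$, set $u_\epsilon(t_k) = J_{t_k-t_{k-1}}\bigl(u_\epsilon(t_{k-1}) + (t_k-t_{k-1})f_k\bigr)$, which by Theorem \ref{thm:main1} is well-defined and stays in the cone. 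The Kobayashi-type a priori estimate controlling $\|u_\epsilon - u_{\epsilon'}\|_\infty$ in terms of $\epsilon,\epsilon'$ and the oscillation of $f$ uses only accretivity and the stated range condition, both of which hold within the cone; this simultaneously yields a unique limit $u$, the bound \eqref{uniform_limit}, and the sign preservation (since the closed cone is invariant under the discrete scheme). The contraction \eqref{contraction_of_solutions} again holds at the discrete level and passes to the limit.

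The most delicate technical point I anticipate is the bookkeeping in case \ref{item:nonnegativity/nonpositivity}, where off-the-shelf theorems cannot be cited verbatim: one must verify that the standard convergence proof for B\'enilan's integral solutions depends only on the accretivity of $\mathcal{L}_{|\Omega}$ restricted to the cone and on the range of $\operatorname{id}+\lambda\mathcal{L}_{|\Omega}$ covering the cone, rather than on $m$-accretivity on all of $\ell^1(X,\mu)$. Everything else --- the contractivity $\|u\|\leq\|g\|$ supplied by Theorem \ref{thm:main1}, the time-continuity in $\ell^1(X,\mu)$, and the incorporation of the inhomogeneity $f$ --- fits the template of \cite{benilan1972equations,benilan1988evolution} without essential modification.
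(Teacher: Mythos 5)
Your proposal is correct and follows essentially the same route as the paper: case \ref{hpA} is handled by invoking the B\'enilan--Crandall--Liggett theory for the $m$-accretive operator $\mathcal{L}_{|\Omega}$, and case \ref{item:nonnegativity/nonpositivity} by iterating the cone-preserving resolvent from Theorem \ref{thm:main1} to build $\epsilon$-approximate solutions inside $\ell^{1,\pm}(X,\mu)$ and then applying B\'enilan's convergence theorem, which requires only accretivity of $\mathcal{L}_{|\Omega}$ together with the existence of $\epsilon$-approximate solutions. The ``delicate point'' you flag is resolved in the paper exactly as you anticipate, by citing \cite[Theorem 3.3]{benilan1988evolution} (or \cite[Theorem 4.1]{barbu2010nonlinear}), whose hypotheses are accretivity on the dense set $\Omega$ plus the existence of $\epsilon$-approximate solutions valued in $\Omega$, not $m$-accretivity on all of $\ell^1(X,\mu)$; sign preservation then follows from the uniformity of the limit.
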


The paper is organized in the following way:
\begin{itemize}
	\item In Section \ref{sec:preliminaries} we present the main definitions and describe the tools that we will use.
	\item In Section \ref{sec:Cauchy_model_problem} we introduce the abstract Cauchy problem along with a classification of types of solutions.
	\item Section \ref{sec:exisntence&uniqueness} is the core of the paper: We present the proofs of Theorem~\ref{thm:main1} and Theorem~\ref{thm:main2} with an introductory part about the main issues to be addressed. As a concluding application, in Corollary \ref{cor:application} we prescribe some hypotheses on the graph that guarantee that a mild solution is indeed a classic solution.
\end{itemize}
Since the proofs involved in Section \ref{sec:exisntence&uniqueness} are technical and rely on several auxiliary results, we collect them in Appendix \ref{sec:auxiliary_results} and Appendix \ref{sec:appendix2}. 
%%%%%%%%%%%%%%%%%%%%%%%%%%%%%%%%%%%%%%%%%%%%	
\section{Preliminaries}\label{sec:preliminaries}
%%%%%%%%%%%%%%%%%%%%%%%%%%%%%%%%%%%%%%%%%%%%
In this section we collect background material for the graph setting and the main mathematical tools that we will use in our proofs.

%%%%%%%%%%%%%%%%%%%%%%%%%%%%%%%%%%%%%%%%%%%
\subsection{Notation}
%%%%%%%%%%%%%%%%%%%%%%%%%%%%%%%%%%%%%%%%%%%
Given a set $X$ and a real function space $\mathfrak{F}\subseteq\{ u \colon X \to \R \}$, we denote by $\operatorname{id}\colon \mathfrak{F} \to \mathfrak{F}$ the identity operator. If $\phi \colon \dom\left(\phi\right)\subseteq \R \to \R$ is a function, then we denote by the capital letter $\Phi$ the canonical extension of $\phi$ to $\mathfrak{F}$, that is, the operator $\Phi \colon \dom\left(\Phi\right)\subseteq \mathfrak{F} \to \mathfrak{F}$ given by
\begin{align*}
&\dom\left(\Phi\right)\coloneqq \left\{ u \in \mathfrak{F} \mid u(x) \in \dom\left(\phi\right)\, \forall\, x \in X  \right\},\\
&\Phi u(x)\coloneqq \phi(u(x)).
\end{align*}
 Given a pair of real-valued functions $u$ and $v$ on $X$, we write $u\geq v$ if $u(x)\geq v(x)$ for every $x \in X$. All other ordering symbols are defined accordingly.

 Given a real Banach space $\mathfrak{E}=(E,\|\cdot\|)$, consider an  $E$-valued function $f \colon [0,T] \subset \R \to E$, $t \mapsto f(t) \in E$. Such a function $f$ is called simple if $f$ is of the form
 $$
 f(t) = \sum_{k=1}^n e_k\mathds{1}_{I_k}(t),\qquad e_k \in E,
 $$
 where $I_k$ are Lebesgue measurable subsets of $[0,T]$ and $\mathds{1}_{I_k}$ is the indicator function of $I_k$. The integral of an $E$-valued simple function is defined by
 $$
 \int_0^T f(t) \, dt\coloneqq \sum_{k=1}^n e_k m\left(I_k\right),
 $$
 where $m(\cdot)$ is the Lebesgue measure on $[0,T]$. A function $f$ is (strongly) measurable if there exists a sequence $\{f_n\}_{n\in \N}$ of simple functions such that $f_n(t) \to f(t)$  in norm for almost every (a.e.) $t$ in $[0,T]$.

 A strongly measurable function $f$ is Bochner integrable if there exists a sequence of simple functions such that $f_n \to f$ pointwise a.e.\ in $[0,T]$ and
 $$
 \lim_{n\to \infty}\int_0^T \|f_n(t) - f(t)\| \, dt =0,
 $$
 or equivalently, by a theorem of Bochner, if and only if $\int_0^T \|f(t)\|\, dt < \infty$.
 The integral of $f$ is then defined by
 $$
 \int_0^T f(t) \, dt=  \lim_{n\to \infty}\int_0^T f_n(t) \, dt.
 $$
We denote the space of Bochner integrable functions from $[0,T]$ to $E$ by
$$
L^1([0,T] ; E)\coloneqq \left\{ f \colon [0,T] \to E \mbox{ measurable}\mid \int_0^T \|f(t)\|\, dt < \infty\right\}.
$$
In the same fashion, if $T=\infty$, we denote by $L^1_{\textnormal{loc}}([0,T] ; E)$ the space of $E$-valued functions that are locally Bochner integrable, that is, $f \in L^1_{\textnormal{loc}}([0,T] ; E)$ if and only if $f \in L^1([0,a] ; E)$ for every $a\in (0,\infty)$. Clearly, if $T<\infty$, then $L^1_{\textnormal{loc}}([0,T] ; E)=L^1([0,T] ; E)$.

A function $f \in L^1_{\textnormal{loc}}([0,T] ; E)$ is weakly differentiable with weak derivative $g \in L^1_{\textnormal{loc}}([0,T] ; E)$ if
 $$
 \int_0^T f(t)\eta'(t)\, dt = - \int_0^T g(t)\eta(t)\, dt, \quad\forall\, \eta \in C_c^\infty(0,T),
 $$
 where the integrals are understood in the Bochner sense. The first Sobolev space for locally Bochner integrable functions is defined as
 $$
 W^{1,1}_{\textnormal{loc}}([0,T] ;E)\coloneqq \left\{  f \in L^1_{\textnormal{loc}}([0,T] ; E) \mid f \mbox{ is weakly differentiable} \right\}.
 $$
Let us point out that $f \in W^{1,1}_{\textnormal{loc}}([0,T] ;E)$ if and only if
$$
f(t) = e_0 + \int_0^t g(s)\, ds.
$$
Moreover, $f$ is absolutely continuous and a.e. differentiable in $[0,T]$ with $f'(t)=g(t)$.

For a review of integration and weak derivatives of vector-valued functions, see, for example, \cite[Chapter 5, Sections 4 and 5]{yosida1965functional} and \cite[Chapter 1, Section 4.5]{cazenave1998introduction}.

Given an operator $\mathcal{L} \colon \dom(\mathcal{L}) \subseteq E \to E$ and a subset $\Omega \subseteq \dom(\mathcal{L})$, then the restriction of $\mathcal{L}$ to $\Omega$ is the operator $\mathcal{L}_{|\Omega} \colon \dom(\mathcal{L}_{|\Omega}) \subseteq E \to E$ such that 
\begin{equation*}
\dom(\mathcal{L}_{|\Omega})= \Omega, \qquad \mathcal{L}_{|\Omega}u= \mathcal{L}u \quad \forall\, u \in \Omega.
\end{equation*}

As a final piece of notation we mention that
if $E\subset \{u \colon X \to \R\}$, then we write $f(t,x)$ to indicate the value of $f(t)\in E$ at $x\in X$.

%%%%%%%%%%%%%%%%%%%%%%%%%%%%%%%%%%%%%%%%%%%
\subsection{The graph setting}
%%%%%%%%%%%%%%%%%%%%%%%%%%%%%%%%%%%%%%%%%%%
For a detailed introduction to the graph setting as presented here, see \cite{keller2021graphs}. 
We begin with the definition of a graph.
\begin{definition}[Graph]
A \textit{graph} is a quadruple $G=(X,w,\kappa,\mu)$ given by
\begin{itemize}
	\item  a countable set of \emph{nodes} $X$;
	\item a nonnegative \emph{edge-weight} function $w\colon X\times X \to [0,\infty)$;
	\item  a nonnegative \emph{killing term} $\kappa \colon X \to [0,\infty)$;
	\item a positive \emph{node measure} $\mu \colon X \to (0,\infty)$
\end{itemize}
where the edge-weight function $w$ satisfies:
\begin{enumerate}[label={\upshape(\bfseries A\arabic*)},wide = 0pt, leftmargin = 3em]
	\item\label{assumption:symmetry} Symmetry: $w(x,y)=w(y,x)$ for every $x,y \in X$;
	\item\label{assumption:loops} No loops: $w(x,x)=0$ for every $x \in X$;
	\item\label{assumption:degree} Finite sum: $\sum_{y\in X} w(x,y) < \infty$ for every $x \in X$.
\end{enumerate}
\end{definition}

If the cardinality of the node set is finite, i.e., $|X|<\infty$, then $G$ is called a \emph{finite graph}, otherwise, $G$ is called an \emph{infinite graph}. The non-zero values $w(x,y)$ of the edge-weight function $w$ are called \emph{weights} associated with the \emph{edge} $\{x,y\}$. In this case we will write $x\sim y$ meaning that $x$ is \emph{connected} to $y$. On the other hand, if $w(x,y)=0$, then we will write $x\nsim y$ meaning that $x$ and $y$ are not connected by an edge. A \emph{walk} is a (possibly infinite) sequence of nodes $\{x_{i}\}_{i\geq 0}$ such that $x_{i}\sim x_{i+1}$. A \emph{path}
is a walk with no repeated nodes.  A graph is \emph{connected}
if there is a finite walk connecting every pair of nodes, that is, for any pair of nodes $x, y$ there is a finite walk such that $x = x_{0}\sim x_{1}\sim\cdots\sim x_{n}=y$. Moreover, we will say that a subset $A \subseteq X$  is connected if for every pair of nodes $x,y \in A$ there exists a finite walk connecting $x$ and $y$ all of whose nodes are in $A$. A subset $A \subseteq X$ is a \emph{connected component} of $X$ if $A$ is maximal with respect to inclusion.

%We will make the following assumptions on the weight function $w$:
%\begin{enumerate}[label={\upshape(\bfseries A\arabic*)},wide = 0pt, leftmargin = 3em]
%	\item\label{assumption:symmetry} Symmetry: $w(x,y)=w(y,x)$ for every $x,y \in X$;
%	\item\label{assumption:loops} No loops: $w(x,x)=0$ for every $x \in X$;
%	\item\label{assumption:degree} Finite sum: $\sum_{y\in X} w(x,y) < \infty$ for every $x \in X$.
%\end{enumerate}
%A graph is said to be \emph{undirected} if the weight function $w$ satisfies \ref{assumption:symmetry} and 
A graph is said to be \emph{locally finite} if for every $x \in X$ there are at most a finite number of nodes $y$ such that $w(x,y)\neq 0$. % It is clear that in the locally finite case $w$ satisfies \ref{assumption:degree}.
We define the \emph{degree} $\deg$ and \emph{weighted degree} $\Deg$ of a node $x$ as
$$ \deg(x) \coloneqq \sum_{{y\in X}} w(x,y) + \kappa(x) \quad \textup{ and } \quad \Deg(x) \coloneqq \frac{\deg(x)}{\mu(x)}.$$
%We define the \emph{degree} of a node $x$ as
%	\begin{equation*}\label{def:deg}
%		\deg(x) \coloneqq \sum_{{y\in X}} w(x,y) + \kappa(x)
%	\end{equation*}
%and the \emph{weighted degree} as
%	\begin{equation*}\label{def:wdeg}
%	\Deg(x) \coloneqq \frac{\deg(x)}{\mu(x)}.
%\end{equation*}
	Clearly, by \ref{assumption:degree}, $\deg(x)$ and $\Deg(x)$ are finite for every $x\in X$. Observe that, if $\kappa\equiv 0$ and $w(x,y)\in \{0,1\}$, then $\deg$ corresponds to the standard definition in the literature on finite graphs (e.g., \cite{estrada2015first}).
	
	The set of real-valued functions on $X$ is denoted by $C(X)$ and $C_c(X)$ denotes the set of functions on $X$ with finite support. As usual, for $p\in [1,\infty]$ we define the $\ell^p(X,\mu)$ subspaces as
	$$
	\ell^p(X,\mu)\coloneqq\begin{cases}
	\left\{ u \in C(X) \mid \sum_{x\in X} |u(x)|^p\mu(x)<\infty \right\} & \mbox{for } p \in [1,\infty),\\
	\left\{ u \in C(X) \mid \sup_{x\in X}|u(x)|<\infty \right\} & \mbox{for } p =\infty
	\end{cases}
	$$
	with their norms
	$$
	\|u\|_p\coloneqq \begin{cases}
	\left(\sum_{x\in X} |u(x)|^p\mu(x)\right)^{{1}/{p}} & \mbox{for } p \in [1,\infty),\\
	\sup_{x\in X}|u(x)| & \mbox{for } p =\infty
	\end{cases}
	$$	
and with the standard remark that the $\ell^2$-norm is induced by the inner product
$$
\langle u, v \rangle_{\ell^2} \coloneqq \sum_{x\in X} u(x)v(x)\mu(x)
$$
making $\ell^2(X,\mu)$ into a Hilbert space. In general, we will use the convention $\|\cdot\|\coloneqq\|\cdot\|_1$ since we will work almost always with the $\ell^1$-norm. However, in case of possible ambiguity in the text, we will specify the norm.
In addition to the previous standard definitions, we introduce the following restrictions to the nonnegative/nonpositive cones:
\begin{align*}
\ell^{1,+}\left(X,\mu\right)\coloneqq \ell^{1}\left(X,\mu\right) \cap \left\{ u \in C(X) \mid u\geq 0 \right\}, \quad
\ell^{1,-}\left(X,\mu\right)\coloneqq -\ell^{1,+}\left(X,\mu\right).
\end{align*}	
%\begin{align*}
%&\ell^{1,+}\left(X,\mu\right)\coloneqq \ell^{1}\left(X,\mu\right) \cap \left\{ u \in C(X) \mid u\geq 0 \right\},\\
%&\ell^{1,-}\left(X,\mu\right)\coloneqq -\ell^{1,+}\left(X,\mu\right)=\ell^{1}\left(X,\mu\right) \cap \left\{ u \in C(X) \mid  u\leq 0 \right\}.
%\end{align*}	

We now define the \emph{formal graph Laplacian}  $\Delta \colon \dom\left(\Delta\right) \subseteq C(X) \to C(X)$ associated to the graph $G=(X,w,\kappa,\mu)$ by
		\begin{subequations}
			\begin{equation}
			\dom\left(\Delta\right)\coloneqq\{ u \in C(X) \mid \sum_{y\in X}w(x,y)|u(y)| < \infty \quad \forall x \in X  \},\label{formal_laplacian1}
			\end{equation}
			\begin{align}
			\Delta u(x)&\coloneqq \frac{1}{\mu(x)}\sum_{y\in X} w(x,y)\left(u(x) - u(y)\right) + \frac{\kappa(x)}{\mu(x)}u(x)\label{formal_laplacian2} \\
			&= \Deg(x)u(x) - \frac{1}{\mu(x)}\sum_{y\in X} w(x,y) u(y).\nonumber
			\end{align}
		\end{subequations}
		%$\Delta$ is called the \emph{formal graph Laplacian} of $G$.
		
		\begin{remark}\label{rem:1}
		We observe that if $u\geq 0$, then $\Delta u(x)$ is always defined  as an extended real-valued function taking values in $[-\infty,\infty)$. Furthermore, if $u \geq 0$, then $u \in \dom\left(\Delta\right)$ if and only if $|\Delta u(x)|< \infty$ for every $x \in X$ if and only if  $\Delta u(x)>-\infty$ for every $x\in X$ if and only if $\sum_{y\in X} w(x,y) u(y)>-\infty$ for every $x\in X$.
		\end{remark}
	\begin{figure}[!b]
	\centering
	\begin{minipage}{.45\textwidth}
		\begin {center}
		\begin {tikzpicture}[-latex ,auto ,node distance =1.5 cm and 1cm ,on grid ,
		semithick ,
		whitestyle/.style={circle,draw,fill=white,minimum size=1cm},
		ghost/.style={circle,fill=white,minimum size=0.7cm}]
		%draw the nodes
		\node[whitestyle] (C){$x_8$};
		\node[whitestyle] (A) [above left=of C] {$x_6$};
		\node[whitestyle] (B) [above right =of C] {$x_5$};
		\node[whitestyle] (D) [below left=of A] {$x_7$};
		\node[whitestyle] (E) [below right=of B] {$x_9$};
		\node[whitestyle] (F) [above right=of A] {$x_{4}$};
		\node[ghost] (g1) [above =of D] {};
		\node[ghost] (g2) [above =of E] {};
		\node[ghost] (g3) [above =of g1] {};
		\node[ghost] (g4) [above =of g2] {};
		\node[whitestyle] (G) [above =of g4] {$x_3$};
		\node[whitestyle] (H) [above =of g3] {$x_0$};
		\node[whitestyle] (I) [above=of H] {$x_1$};
		\node[whitestyle] (L) [above=of G] {$x_2$};
		%draw the edges
		\path (C) edge [double=black,-] node[] {} (A);
		\path (A) edge [double=black,-] node[] {} (C);
		\path (A) edge [double=black,-] node[] {} (B);
		\path (B) edge [double=black,-] node[] {} (A);
		\path (C) edge [double=black,-] node[] {} (B);
		\path (B) edge [double=black,-] node[] {} (C);
		\path (B) edge [double=black,-] node[] {} (F);
		\path (F) edge [double=black,-] node[] {} (B);
		\path (F) edge [double=black,-] node[] {} (H);
		\path (H) edge [double=black,-] node[] {} (F);
		\path (H) edge [double=black,-] node[] {} (D);
		\path (D) edge [double=black,-] node[] {} (H);
		\path (D) edge [double=black,-] node[] {} (C);
		\path (C) edge [double=black,-] node[] {} (D);
		\path (D) edge [double=black,-] node[] {} (A);
		\path (A) edge [double=black,-] node[] {} (D);
		\path (C) edge [double=black,-] node[] {} (E);
		\path (E) edge [double=black,-] node[] {} (C);
		\path (E) edge[double=black,-] node[] {} (G);
		\path (G) edge[double=black,-] node[] {} (E);
		\path (G) edge[double=black,-] node[] {} (L);
		\path (L) edge[double=black,-] node[] {} (G);
		\path (I) edge[double=black,-] node[] {} (H);
		\path (H) edge[double=black,-] node[] {} (I);
		\path (I) edge[double=black,-] node[above] {} (L);
		\path (L) edge[double=black,-] node[above] {} (I);
		\path (F) edge[double=black,-] node[] {} (G);
		\path (G) edge[double=black,-] node[] {} (F);
		\draw[latex'-latex',double] (G) edge[double=black,-] node [left] {} (H);
	\end{tikzpicture}
\end{center}\end{minipage}
\begin{minipage}{.45\textwidth}
\centering
\begin {tikzpicture}[-latex ,auto ,node distance =1.5 cm and 1cm ,on grid ,
semithick ,
whitestyle/.style={circle,draw,fill=white,minimum size=1cm},
blackstyle/.style ={ circle ,top color =black, bottom color = black,
	draw, white, minimum size =1cm},
gray-orangestyle/.style ={ circle ,top color =lightgray!70 , bottom color = lightgray!70 ,
	draw,  black, text=black, minimum size =1cm},
green-redstyle/.style ={ circle ,top color = darkpastelgreen , bottom color =darkpastelgreen ,
	draw, black , text=black, minimum size=1cm},
greenstyle/.style={circle ,top color =darkspringgreen , bottom color = darkspringgreen,
	draw,black , text=black ,minimum size=1cm},
lightgreenstyle-red/.style={circle ,top color =lightgreen!70 , bottom color = lightgreen!70,
	draw, black, text=black ,minimum size=1cm},
ghost/.style={circle,fill=white,minimum size=0.7cm}]
%draw the nodes
\node[greenstyle] (C){$x_8$};
\node[greenstyle] (A) [above left=of C] {$x_6$};
\node[greenstyle] (B) [above right =of C] {$x_5$};
\node[lightgreenstyle-red] (D) [below left=of A] {$x_7$};
\node[lightgreenstyle-red] (E) [below right=of B] {$x_9$};
\node[lightgreenstyle-red] (F) [above right=of A] {$x_{4}$};
\node[ghost] (g1) [above =of D] {};
\node[ghost] (g2) [above =of E] {};
\node[ghost] (g3) [above =of g1] {};
\node[ghost] (g4) [above =of g2] {};
\node[gray-orangestyle] (G) [above =of g4] {$x_3$};
\node[gray-orangestyle] (H) [above =of g3] {$x_0$};
\node[whitestyle] (I) [above=of H] {$x_1$};
\node[whitestyle] (L) [above=of G] {$x_2$};
%draw the edges
\path (C) edge [double=black,-] node[] {} (A);
\path (A) edge [double=black,-] node[] {} (C);
\path (A) edge [double=black,-] node[] {} (B);
\path (B) edge [double=black,-] node[] {} (A);
\path (C) edge [double=black,-] node[] {} (B);
\path (B) edge [double=black,-] node[] {} (C);
\path (B) edge [double=black,-] node[] {} (F);
\path (F) edge [double=black,-] node[] {} (B);
\path (F) edge [double=black,-] node[] {} (H);
\path (H) edge [double=black,-] node[] {} (F);
\path (H) edge [double=black,-] node[] {} (D);
\path (D) edge [double=black,-] node[] {} (H);
\path (D) edge [double=black,-] node[] {} (C);
\path (C) edge [double=black,-] node[] {} (D);
\path (D) edge [double=black,-] node[] {} (A);
\path (A) edge [double=black,-] node[] {} (D);
\path (C) edge [double=black,-] node[] {} (E);
\path (E) edge [double=black,-] node[] {} (C);
\path (E) edge[double=black,-] node[] {} (G);
\path (G) edge[double=black,-] node[] {} (E);
\path (G) edge[double=black,-] node[] {} (L);
\path (L) edge[double=black,-] node[] {} (G);
\path (I) edge[double=black,-] node[] {} (H);
\path (H) edge[double=black,-] node[] {} (I);
\path (I) edge[double=black,-] node[above] {} (L);
\path (L) edge[double=black,-] node[above] {} (I);
\path (F) edge[double=black,-] node[] {} (G);
\path (G) edge[double=black,-] node[] {} (F);
\draw[latex'-latex',double] (G) edge[double=black,-] node [left] {} (H);
\end{tikzpicture}
\end{minipage}
\caption{Example of a connected graph $G=(X,w,\kappa,\mu)$ (in the left picture) with $X=\left\{x_i \mid i=0,\ldots, 9\right\}$ and a proper subset $A=\left\{x_4,x_5,x_6,x_7,x_8,x_{9}\right\}$. A black line between two nodes $x_i,x_j \in X$ means that $x_i\sim x_j$. In the right picture, the interior $\mathring{A}=\{x_5,x_6,x_8\}$ is colored in green while the interior boundary $\mathring{\partial} A=\{x_4,x_7,x_9\}$ is colored in light green. The nodes in the exterior boundary $\mathbullet{\partial} A = \{x_0, x_3\}\subseteq X\setminus A$  are colored in light gray.}\label{fig:interior-boundary}
\end{figure}
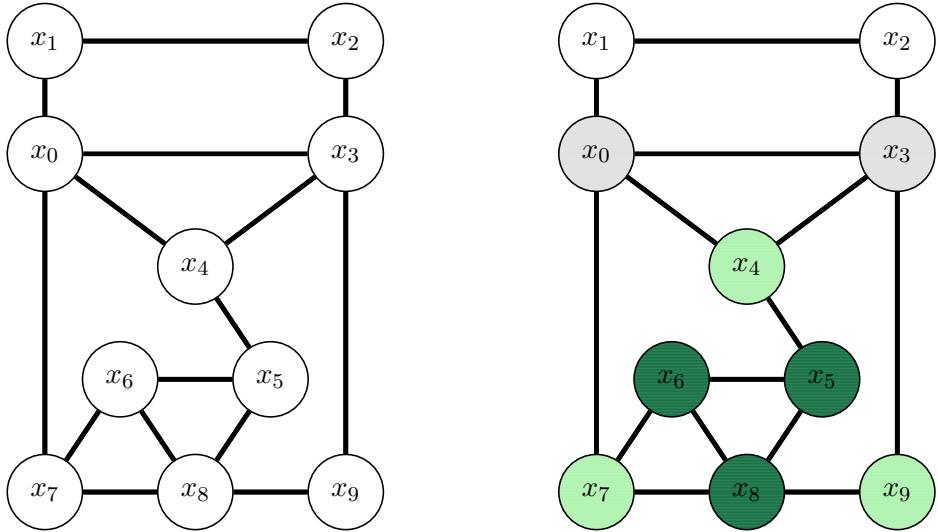	
As we will see in the upcoming sections, it will be useful to deal with subgraphs of a graph. We start by discussing the notion of the interior and two notions of boundary for a subset of the node set. Given a graph $G=(X,w,\kappa,\mu)$ and a subset $A\subset X$, then
	$$
	\mathring{A}\coloneqq\left\{ x \in A \mid x \nsim y \mbox{ for every } y \in X\setminus A \right\}
	$$
	is called the \emph{interior} of $A$ and the elements of $\mathring{A}$ are called \emph{interior nodes} of $A$. On other hand, the sets of nodes
	\begin{align*}
	&\mathring{\partial} A\coloneqq\left\{ x \in A \mid x \sim y  \mbox{ for some }  y \in X\setminus A \right\},\\
	&\mathbullet{\partial} A\coloneqq\left\{ y \in X\setminus A \mid y \sim x  \mbox{ for some }  x \in  A \right\}
	\end{align*}
	are called the \emph{interior boundary} and the \emph{exterior boundary} of $A$, respectively.
	Although these notions are rather standard in the graph setting, we illustrate the definitions
	with an example in Figure~\ref{fig:interior-boundary}. 	

We next introduce the concept of an induced subgraph.
	\begin{definition}[Induced subgraph]\label{def:induced_subgraph}
 We say that a graph $F=(A, w',\kappa', \mu')$ is an \emph{induced subgraph} of $G$, and we write $F\subset G$, if
	\begin{itemize}
		\item $A\subset X$;
		\item $w' \equiv w_{|A\times A}$;
		\item $\kappa'(x) = \kappa(x)$ for every $x \in\mathring{A}$;
		\item $\mu' \equiv \mu_{|A}$
	\end{itemize}
where $w_{|A\times A}$ and $\mu_{|A}$ denote the restrictions of $w$ and $\mu$ to the sets $A\times A$ and $A$, respectively.
	We call $G$ the \emph{host graph} or the \emph{supergraph}. %, see Figure \ref{fig:interior-boundary}.
	The corresponding formal graph Laplacian for a subgraph $F$ is defined according to \eqref{formal_laplacian1} and \eqref{formal_laplacian2} where the quadruple $(X,w,\kappa,\mu)$ is replaced by $(A,w',\kappa',\mu')$. Observe that we do not require that $\kappa' \equiv \kappa$ on $\mathring{\partial}A$. Different choices of $\kappa'$ on $\mathring{\partial} A$ will produce different subgraphs. We say that $F$ is the \emph{canonical induced subgraph} if $\kappa'=\kappa_{|A}$.
	\end{definition}
	\begin{remark}
	Our notion of induced subgraph is intrinsically related to the killing term $\kappa'$. If we do not consider any $\kappa$, then the definition of induced subgraph is equivalent to the classical one, see for example \cite[Definition 2.2]{estrada2015first}.
	\end{remark}
	Of particular interest is the killing term $\kappa_{\textnormal{dir}}$ that arises from \textquotedblleft Dirichlet boundary conditions.\textquotedblright

	\begin{figure}[!t]
		\centering
		\begin{minipage}{.45\textwidth}
			\begin {center}
			\begin{tikzpicture}[-latex ,auto ,node distance =1.5 cm and 1cm ,on grid ,
				semithick ,
				whitestyle/.style={circle,draw,fill=white,minimum size=1cm},
				blackstyle/.style ={ circle ,top color =black, bottom color = black,
					draw, white, minimum size =1cm},
				white-redstyle/.style ={ circle ,top color =lightgray!70 , bottom color = lightgray!70,
					draw,  black, text=black, minimum size =1cm},
				green-redstyle/.style ={ circle ,top color = lightgreen!70 , bottom color =lightgreen!70 ,
					draw, black , text=black, minimum size=1cm},
				greenstyle/.style={circle ,top color =darkspringgreen , bottom color = darkspringgreen,
					draw,black , text=black ,minimum size=1cm},
				ghost/.style={circle,fill=white,minimum size=0.7cm}]
				%draw nodes
				\node[greenstyle] (C){$x_8$};
				\node[greenstyle] (A) [above left=of C] {$x_6$};
				\node[greenstyle] (B) [above right =of C] {$x_5$};
				\node[green-redstyle] (D) [below left=of A] {$x_7$};
				\node[green-redstyle] (E) [below right=of B] {$x_9$};
				\node[green-redstyle] (F) [above right=of A] {$x_{4}$};
				\node[ghost] (g1) [above =of D] {};
				\node[ghost] (g2) [above =of E] {};
				\node[ghost] (g3) [above =of g1] {};
				\node[ghost] (g4) [above =of g2] {};
				\node[white-redstyle] (G) [above =of g4] {$x_3$};
				\node[white-redstyle] (H) [above =of g3] {$x_0$};
				\node[whitestyle] (I) [above=of H] {$x_1$};
				\node[whitestyle] (L) [above=of G] {$x_2$};
				%draw edges
				\path (C) edge [double=black,-] node[] {} (A);
				\path (A) edge [double=black,-] node[] {} (C);
				\path (A) edge [double=black,-] node[] {} (B);
				\path (B) edge [double=black,-] node[] {} (A);
				\path (C) edge [double=black,-] node[] {} (B);
				\path (B) edge [double=black,-] node[] {} (C);
				\path (B) edge [double=black,-] node[] {} (F);
				\path (F) edge [double=black,-] node[] {} (B);
				\path (D) edge [double=black,-] node[] {} (C);
				\path (C) edge [double=black,-] node[] {} (D);
				\path (D) edge [double=black,-] node[] {} (A);
				\path (A) edge [double=black,-] node[] {} (D);
				\path (C) edge [double=black,-] node[] {} (E);
				\path (E) edge [double=black,-] node[] {} (C);
				\path (G) edge[double=black,-] node[] {} (L);
				\path (L) edge[double=black,-] node[] {} (G);
				\path (I) edge[double=black,-] node[] {} (H);
				\path (H) edge[double=black,-] node[] {} (I);
				\path (I) edge[double=black,-] node[above] {} (L);
				\path (L) edge[double=black,-] node[above] {} (I);
				\draw[latex'-latex',double] (G) edge[double=black,-] node [left] {} (H);
			\end{tikzpicture}
	\end{center}\end{minipage}
	\begin{minipage}{.45\textwidth}
		\centering
		\begin {tikzpicture}[-latex ,auto ,node distance =1.5 cm and 1cm ,on grid ,
		semithick ,
		whitestyle/.style={circle,draw,fill=white,minimum size=1cm},
		blackstyle/.style ={ circle ,top color =black, bottom color = black,
			draw, text=white, minimum size =1cm},
		white-redstyle/.style ={ circle ,top color =lightgray!70 , bottom color = lightgray!70 ,
			draw, double=red,very thick, black, text=black, minimum size =1cm},
		green-redstyle/.style ={ circle ,top color = lightgreen!70 , bottom color =lightgreen!70 ,
			draw,double=red,very thick, black , text=black, minimum size=1cm},
		greenstyle/.style={circle ,top color =darkspringgreen , bottom color = darkspringgreen,
			draw,black , text=black ,minimum size=1cm},
		ghost/.style={circle,fill=white,minimum size=0.7cm}]
		%draw nodes
		\node[greenstyle] (C){$x_8$};
		\node[greenstyle] (A) [above left=of C] {$x_6$};
		\node[greenstyle] (B) [above right =of C] {$x_5$};
		\node[green-redstyle] (D) [below left=of A] {$x_7$};
		\node[green-redstyle] (E) [below right=of B] {$x_9$};
		\node[green-redstyle] (F) [above right=of A] {$x_{4}$};
		\node[ghost] (g1) [above =of D] {};
		\node[ghost] (g2) [above =of E] {};
		\node[ghost] (g3) [above =of g1] {};
		\node[ghost] (g4) [above =of g2] {};
		\node[white-redstyle] (G) [above =of g4] {$x_3$};
		\node[white-redstyle] (H) [above =of g3] {$x_0$};
		\node[whitestyle] (I) [above=of H] {$x_1$};
		\node[whitestyle] (L) [above=of G] {$x_2$};

		\path (C) edge [double=black,-] node[] {} (A);
		\path (A) edge [double=black,-] node[] {} (C);
		\path (A) edge [double=black,-] node[] {} (B);
		\path (B) edge [double=black,-] node[] {} (A);
		\path (C) edge [double=black,-] node[] {} (B);
		\path (B) edge [double=black,-] node[] {} (C);
		\path (B) edge [double=black,-] node[] {} (F);
		\path (F) edge [double=black,-] node[] {} (B);
		\path (D) edge [double=black,-] node[] {} (C);
		\path (C) edge [double=black,-] node[] {} (D);
		\path (D) edge [double=black,-] node[] {} (A);
		\path (A) edge [double=black,-] node[] {} (D);
		\path (C) edge [double=black,-] node[] {} (E);
		\path (E) edge [double=black,-] node[] {} (C);
		\path (G) edge[double=black,-] node[] {} (L);
		\path (L) edge[double=black,-] node[] {} (G);
		\path (I) edge[double=black,-] node[] {} (H);
		\path (H) edge[double=black,-] node[] {} (I);
		\path (I) edge[double=black,-] node[above] {} (L);
		\path (L) edge[double=black,-] node[above] {} (I);
		\draw[latex'-latex',double] (G) edge[double=black,-] node [left] {} (H);
		
		\path (H) edge [double=red, dashed,-] node[midway,below] {$\textcolor{red2}{b_{\textnormal{dir}}}$} (F);
		\path (H) edge [double=red, dashed,-] node[] {$\textcolor{red2}{b_{\textnormal{dir}}}$} (D);
		\path (E) edge[double=red, dashed,-] node[] {$\textcolor{red2}{b_{\textnormal{dir}}}$} (G);
	\path (F) edge[double=red, dashed,-] node[] {$\textcolor{red2}{b_{\textnormal{dir}}}$} (G);
		\draw[latex'-latex',double] (G) edge[black,-] node [left] {} (H);
	\end{tikzpicture}
\end{minipage}
\caption{Continuation of the example in Figure \ref{fig:interior-boundary}. The left picture gives a canonical induced subgraph $F\subset G$: In general, $F$ is not ``affected'' by the complementary set $X\setminus A$ since the weights of the edges lying in $X\setminus A$ do not influence the graph $F$. The right picture gives instead a Dirichlet subgraph $G_{\textnormal{dir}}$: Due to the presence of the Dirichlet weight function $b_{\textnormal{dir}}$ the subgraph $G_{\textnormal{dir}}$ is still affected by the complementary set $X\setminus A$.}\label{fig:dirichlet_subgraph}
\end{figure}
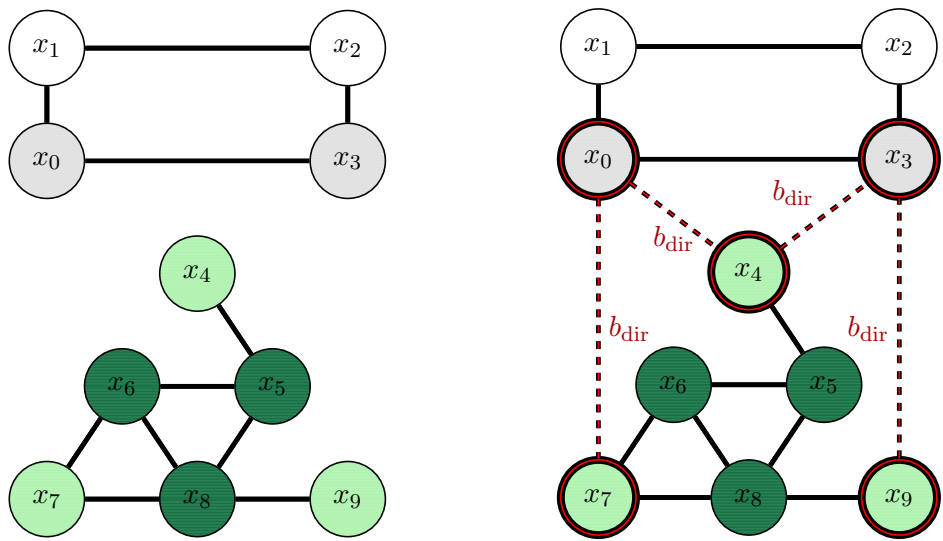

	\begin{definition}[Dirichlet subgraph]\label{def:dir_subgraph}
		An induced subgraph
		$$
		G_{\textnormal{dir}} \coloneqq\left(A, w_{|A\times A}, \kappa_{\textnormal{dir}}, \mu_{|A}\right)\subset G
		$$
		is called a \emph{Dirichlet subgraph} if
	\begin{equation}\label{def:dir_potential}
	\begin{cases}
	\kappa_{\textnormal{dir}}(x) \coloneqq \kappa_{|A}(x) + b_{\textnormal{dir}}(x),\\
	 b_{\textnormal{dir}}(x)\coloneqq\sum_{y \in  \mathbullet{\partial} A}w(x,y) = \sum_{y \not \in A}w(x,y).
	\end{cases}	
		\end{equation}
We note that $b_{\textnormal{dir}} \colon A \to \R$ is finite because of \ref{assumption:degree}. We call $b_{\textnormal{dir}}$ the \emph{boundary (Dirichlet) weight-function} and  $\kappa_{\textnormal{dir}}$ the \emph{Dirichlet killing term}. Clearly, $b_{\textnormal{dir}}(x)= 0$ for every $x \in \mathring{A}$. We will denote by $\Delta_{\textnormal{dir}}$ the graph Laplacian of $G_{\textnormal{dir}}$  in order to distinguish it from the graph Laplacian of $G$.
	\end{definition}
	
The Dirichlet killing term describes the edge deficiency of nodes in $G_{\textnormal{dir}}$ compared to the same nodes in $G$, see Figure \ref{fig:dirichlet_subgraph}. The name ``Dirichlet'' in the above definition comes from the following observation, see, for example,
 \cite[pg.~197 and Proposition 2.4]{keller2012dirichlet} and \cite[Proposition 2.23]{keller2021graphs}: Let $\boldsymbol{\mathfrak{i}} \colon  C(A) \hookrightarrow  C(X)$ be the canonical embedding and $\boldsymbol{\pi} \colon C(X) \to C(A)$ be the canonical projection, i.e.,
\begin{equation}\label{eq:embedding-projection}
	\boldsymbol{\mathfrak{i}}v(x)= \begin{cases}
	v(x) & \mbox{if } x\in A,\\
	0 & \mbox{if } x\in X\setminus A,
	\end{cases}
	\qquad
	\boldsymbol{\pi}u= u_{|A}.
\end{equation}
Under some mild assumptions, it is almost straightforward to prove, see Lemma \ref{lem:A1}, that
	\begin{itemize}
		\item $\Deltadir v(x)=\Delta\boldsymbol{\mathfrak{i}}v(x)$ for $v\in \dom(\Deltadir)$ and $x \in A$;
		\item $\Delta u(x) = \Deltadir\boldsymbol{\pi}u(x)$ for $u\in \dom\left(\Delta\right)\cap \left\{u \in C(X) \mid u \equiv 0 \mbox{ on } X\setminus A \right\}$ and $x \in A$.
	\end{itemize}
	Therefore, the Dirichlet graph Laplacian $\Deltadir$ can be viewed as the restriction of $\Delta$ having imposed Dirichlet conditions on the exterior boundary of $A$.

%%%%%%%%%%%%%%%%%%%%%%%%%%%%%%%%%%%%%%%%%%%%%%%
\subsection{m-accretive operators}\label{ssec:m-accretivity}	
%%%%%%%%%%%%%%%%%%%%%%%%%%%%%%%%%%%%%%%%%%%%%%
In preparation for Section \ref{sec:existence_weak}, we introduce here the fundamental tools that will play a major role in the proofs of existence and uniqueness of solutions to the \ref{Model_Equation_graph} in the discrete setting. We begin by giving two equivalent definitions of accretivity. As a reference for this topic, see for example \cite{deimling2010nonlinear}.

\begin{definition}[Accretive and m-accretive operators]\label{def:m-accretivity}
If $\mathfrak{E}=(E,\|\cdot\|)$ is a real Banach space and $\mathcal{L} \colon \dom(\mathcal{L})\subseteq E \to E$ is a (not necessarily linear) operator, then $\mathcal{L}$ is said to be \emph{accretive} if $\mathcal{L}$ satisfies one of the following equivalent conditions:
\begin{enumerate}[(i)]
	\item\label{m-accretivity1} $\left\|(u - v) + \lambda \left(\mathcal{L}u - \mathcal{L}v\right) \right\|\geq \|u - v\|$ for every $u,v \in \dom\left(\mathcal{L}\right)$ and for every $\lambda >0$.
	\item\label{m-accretivity2} $\langle \mathcal{L}u -\mathcal{L}v, u - v \rangle_+ \geq 0$ for every $u,v \in \dom\left(\mathcal{L}\right)$ where for $z, k \in E$
	$$
	\langle z, k \rangle_+\coloneqq\|k\|\lim_{\lambda\to 0^+} \lambda^{-1}\left( \left\|k + \lambda z\right\| - \|k\|  \right).
	$$
\end{enumerate}
Concerning the well-posedness of condition \ref{m-accretivity2} and its application to $\ell^p$-spaces, see Remark \ref{rem:accretivity_l^2} below.
An accretive operator $\mathcal{L}$ is called \emph{$m$-accretive} if $\operatorname{id} + \lambda\mathcal{L}$ is surjective for every $\lambda >0$.
\end{definition}
Accretive operators are called monotone operators in the Hilbert space setting. Accretivity is a way to extend the property of monotonicity of real-valued functions of a real variable to spaces with a more complex structure. This follows by the trivial observation that a map $f \colon \dom(f) \subseteq \R \to \R$ is monotone (increasing) if and only if $\left(f(s_1)-f(s_2)\right)\left(s_1-s_2\right)\geq 0$ for all $s_1,s_2 \in \dom(f)$.

Let us highlight that $m$-accretivity is related to the self-adjointness of linear operators in the Hilbert case setting. Indeed, a linear operator $\mathcal{L}$ on a Hilbert space  is self-adjoint and nonnegative if and only if $\mathcal{L}$ is symmetric, closed and $m$-accretive (by the Minty theorem, m-accretive and maximal monotone are equivalent properties in Hilbert spaces), see \cite[ Problem V.3.32]{kato2013perturbation}. In this context, we note that there has been recent interest in the graph setting concerning the essential self-adjointness of the formal Laplacian and related operators restricted to finitely supported functions, see, for example, \cite{wojciechowski2008stochastic,milatovic2011essential,keller2012dirichlet,huang2013note,guneysu2014generalized,milatovic2014self,milatovic2015maximal,guneysu2016feynman,schmidt2020existence}. Concerning the $m$-accretivity of the graph Laplacian  we also highlight a couple of recent results: The first is obtained
in \cite{milatovic2015maximal}, where the authors establish the $m$-accretivity on $\ell^p(X,\mu)$ for $1\leq p <\infty$ in the more general setting of Hermitian vector bundles, under some hypotheses on the graph.  The second result is obtained in \cite{anne2020m} where the authors prove a criterion for the $m$-accretivity of a graph Laplacian (not necessarily self-adjoint) on directed graphs in the Hilbert case.

\begin{remark}\label{rem:accretivity_l^2}
Let us observe that condition \ref{m-accretivity2} in Definition \ref{def:m-accretivity} is well-posed. First of all, for $\lambda>0$
$$
-\|z\|\leq  \frac{\left\|k + \lambda z\right\| - \|k\|}{\lambda}   \leq \|z\|.
$$
Then we observe that, for every $0<s<\lambda$,
\begin{align*}
\| k + sz\| - \|k\| = \left\|\left(1-\frac{s}{\lambda}\right)k +  \frac{s}{\lambda} (k+\lambda z) \right\|	- \|k\| &\leq \left(1-\frac{s}{\lambda}\right)\|k\| +\frac{s}{\lambda}\|k+\lambda z\| - \|k\|\\
&=\frac{s}{\lambda} \left(\| k + \lambda z\| - \|k\|\right),
\end{align*}
i.e., the map $\lambda \mapsto \lambda^{-1}\left(\|k + \lambda z \| - \|k\|\right)$ is monotone increasing in $\lambda>0$. Therefore,
$$
\lim_{\lambda\to 0^+}\lambda^{-1}\left(\|k + \lambda z \| - \|k\|\right)
$$
exists and belongs to $[-\|z\|, \|z\|]$. 

Now, let us fix $E=\ell^p(X,\mu)$ with $p\in [1,\infty)$. By the convexity of the map $t\mapsto |t|^p$, for every $\lambda\in (0,1]$ we get
\begin{align*}
&\frac{|k \pm \lambda z |^p - |k|^p}{\lambda}= \frac{|(1-\lambda)k + \lambda(k\pm  z) |^p - |k|^p}{\lambda}\leq |k \pm z|^p - |k|^p
\end{align*}
and
$$|k|^p - |k - \lambda z |^p\leq |k + \lambda z |^p -|k|^p  $$
where the second inequality can be easily derived by $|f+g|^p\leq 2^{p-1}(|f|^p+|g|^p)$ with $f\coloneqq k-\lambda z$ and $g\coloneqq k+\lambda z$. Combining these inequalities gives
$$
|k|^p - |k - z|^p\leq \frac{ |k|^p - | k - \lambda z|^p}{\lambda} \leq\frac{|k + \lambda z |^p - |k|^p}{\lambda}\leq |k + z|^p - |k|^p \quad \forall\, \lambda\in (0,1],
$$
that is, $\lambda^{-1}| |k + \lambda z |^p - |k|^p |$ is dominated by an integrable function. Then, by the mean value theorem and dominated convergence, for every $p\geq1$ (and $\|k\|_p\neq 0$) we get 
\begin{align*}
\lim_{\lambda\to 0^+}\lambda^{-1}\left(\|k + \lambda z \|_p - \|k\|_p\right) &=\lim_{\lambda\to 0^+}\lambda^{-1}\left((\|k + \lambda z \|^p_p)^{1/p} - (\|k\|^p_p)^{1/p}\right) \\
&= \frac{1}{p}(\|k\|^p_p)^{\frac{1}{p}-1}\lim_{\lambda\to 0^+}\sum_{x \in X} \frac{|k(x) + \lambda z(x) |^p - |k(x)|^p}{\lambda}\mu(x)\\
&=\begin{cases}
	\|k\|_p^{1-p}\sum_{x \in X}z(x)|k(x)|^{p-1}\operatorname{sgn}(k(x)) \mu(x) &\mbox{for } p>1,\\
\sum\limits_{\substack{x\in X\colon\\ k(x)= 0}}|z(x)|\mu(x) + \sum\limits_{\substack{x\in X\colon\\ k(x)\neq 0}} z(x)\operatorname{sgn}(k(x))\mu(x) &\mbox{for } p=1
\end{cases}
\end{align*}
where
\begin{equation}\label{eq:sgn}
	\operatorname{sgn}(s)\coloneqq\begin{cases}
		1 & \mbox{if } s>0,\\
		0 & \mbox{if } s=0,\\
		-1 & \mbox{if } s<0.
	\end{cases}
\end{equation}
Summarizing, for $E=\ell^p(X,\mu)$ with $p\in [1,\infty)$,
\begin{equation}\label{accretivity_for_lp}
\langle z, k \rangle_+  = \begin{cases}
	\|k\|_p^{2-p} \sum\limits_{x \in X} z(x)k(x)|k(x)|^{p-2}\mu(x) & \mbox{if } p>1,\\
	\|k\|_1\left( \sum\limits_{\substack{x\in X\colon\\ k(x)= 0}}|z(x)|\mu(x) + \sum\limits_{\substack{x\in X\colon\\ k(x)\neq 0}} z(x)\operatorname{sgn}(k(x))\mu(x) \right)& \mbox{if } p=1.
\end{cases}
\end{equation}
\end{remark}

A simple example of an $m$-accretive operator is the graph Laplacian on finite graphs with respect to the $\ell^p$-norm. This result should be well-known but for completeness we give a proof in the following proposition.

\begin{proposition}\label{lem:m-accretivity_for_finite_graphs}
	Let $G$ be a finite graph. Then, the graph Laplacian $\Delta$ on $\ell^p(X,\mu)$ is $m$-accretive for $p\geq 1$. In particular,
	$$
	\left\| (\operatorname{id} + \lambda \Delta)u \right\|_p\geq \|u \|_p \qquad \mbox{for every } u \in C(X), \, \lambda >0.
	$$
\end{proposition}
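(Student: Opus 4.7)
\medskip

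\noindent\textbf{Proof plan.} Since $\Delta$ is linear, condition (ii) of Definition~\ref{def:m-accretivity} reduces, after setting $u\leftarrow u-v$, to the single inequality $\langle \Delta u, u\rangle_+\geq 0$ for every $u\in C(X)$. The strategy is to verify this using the explicit $\ell^p$-formula~\eqref{accretivity_for_lp}, split into the cases $p>1$ and $p=1$, and then deduce $m$-accretivity from a finite-dimensional argument. The ``in particular'' inequality $\|(\operatorname{id}+\lambda\Delta)u\|_p\geq\|u\|_p$ is then just accretivity applied with $v=0$.

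\medskip

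\noindent\emph{Case $p\in(1,\infty)$.} Plug $z=\Delta u$, $k=u$ into~\eqref{accretivity_for_lp}. It suffices to show that $\sum_{x\in X}\Delta u(x)\,u(x)|u(x)|^{p-2}\mu(x)\geq 0$. Expanding $\Delta u$ via~\eqref{formal_laplacian2} splits this sum into an edge part and a killing part. The killing part equals $\sum_x \kappa(x)|u(x)|^p\geq 0$. For the edge part, I would use the symmetry $w(x,y)=w(y,x)$ to symmetrize it into
\[
\tfrac{1}{2}\sum_{x,y\in X}w(x,y)\bigl(u(x)-u(y)\bigr)\bigl(u(x)|u(x)|^{p-2}-u(y)|u(y)|^{p-2}\bigr),
\]
each summand of which is nonnegative because $s\mapsto s|s|^{p-2}$ is monotone increasing for $p>1$.

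\medskip

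\noindent\emph{Case $p=1$.} Rather than unpacking the awkward split in~\eqref{accretivity_for_lp} over $\{u=0\}$ and $\{u\neq 0\}$, I would argue directly. Writing
\[
u(x)+\lambda\Delta u(x)=u(x)\bigl(1+\lambda\Deg(x)\bigr)-\frac{\lambda}{\mu(x)}\sum_{y\in X}w(x,y)u(y),
\]
the triangle inequality gives
\[
\bigl|u(x)+\lambda\Delta u(x)\bigr|\mu(x)\geq |u(x)|\bigl(\mu(x)+\lambda\deg(x)\bigr)-\lambda\sum_{y\in X}w(x,y)|u(y)|.
\]
Summing over $x$, using $\deg(x)=\sum_y w(x,y)+\kappa(x)$, and exploiting $w(x,y)=w(y,x)$ to equate $\sum_{x,y}w(x,y)|u(x)|=\sum_{x,y}w(x,y)|u(y)|$, the cross terms cancel and one is left with
\[
\|u+\lambda\Delta u\|_1\geq \|u\|_1+\lambda\sum_{x\in X}\kappa(x)|u(x)|\geq\|u\|_1,
\]
which proves both accretivity and the ``in particular'' inequality for $p=1$.

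\medskip

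\noindent\emph{Surjectivity.} Since $G$ is finite, $\ell^p(X,\mu)=C(X)$ is a finite-dimensional real vector space and $\operatorname{id}+\lambda\Delta$ is a linear endomorphism of it. Accretivity already gives $\|(\operatorname{id}+\lambda\Delta)u\|_p\geq\|u\|_p$, so $\operatorname{id}+\lambda\Delta$ is injective; on a finite-dimensional space injectivity is equivalent to bijectivity, so it is surjective. Hence $\Delta$ is $m$-accretive.

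\medskip

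\noindent\textbf{Main obstacle.} The only subtle point is handling $p=1$: the formula in~\eqref{accretivity_for_lp} involves a sign convention that is tailored to an extremal choice on $\{u=0\}$, and pushing it through directly is tedious. Bypassing the formula via the reverse triangle inequality and symmetrization of the $w$-sum is cleaner and avoids case-splitting, which is why I would take that route.
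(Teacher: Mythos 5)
Your proof is correct, but it departs from the paper's in two places worth noting. For $p>1$ the paper also symmetrizes over $w(x,y)=w(y,x)$, but then drops signs to reduce everything to the two-variable inequality $a^p+b^p-a^{p-1}b-ab^{p-1}\geq 0$ for $a,b\geq 0$, which it proves by a calculus analysis of $\beta(t)=t^p+1-t^{p-1}-t$; your observation that each symmetrized summand has the form $\bigl(u(x)-u(y)\bigr)\bigl(F(u(x))-F(u(y))\bigr)$ with $F(s)=s|s|^{p-2}$ monotone increasing is shorter and avoids the calculus entirely. For $p=1$ the paper does not argue inside the proposition at all: it defers to Corollary~\ref{cor:accretivityL-finite} in Appendix~\ref{sec:appendix2}, which proves accretivity of the nonlinear operator $\mathcal{L}=\Delta\Phi$ on any finite graph via Green's identity and a case analysis on $\operatorname{sgn}$; your reverse-triangle-inequality computation, with the cross terms cancelling by symmetry of $w$ and the $\kappa$-term contributing a nonnegative surplus, is a self-contained and more elementary substitute that directly yields $\|u+\lambda\Delta u\|_1\geq\|u\|_1$, hence condition \ref{m-accretivity1} of Definition~\ref{def:m-accretivity} by linearity. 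The surjectivity step is identical. What the paper's route buys is reuse (the appendix corollary is needed anyway for the main theorems, and it covers the nonlinear operator); what yours buys is a shorter, uniform, fully elementary proof of this proposition on its own.
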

\begin{proof}
Fix $p\in (1,\infty)$. Applying the linearity of $\Delta$ and \eqref{accretivity_for_lp} of Remark \ref{rem:accretivity_l^2}, $\Delta$ is accretive if and only if
	$$
	\|u\|_p^{2-p}\sum_{x\in X} \Delta u(x)  u(x)|u(x)|^{p-2}\mu(x)\geq 0.
	$$	
Using the fact that the sum is finite, we have
\begin{align*}
\sum_{x\in X} \Delta u(x) & u(x)|u(x)|^{p-2}\mu(x) \\
& = \sum_{x\in X} u(x)|u(x)|^{p-2} \sum_{y\in X}w(x,y)(u(x)-u(y)) + \sum_{x\in X}\kappa(x)|u(x)|^p\\
&\geq \sum_{x\in X} \sum_{y\in X}w(x,y)(|u(x)|^{p}-u(y)u(x)|u(x)|^{p-2})\\
&\geq \frac{1}{2} \sum_{x,y\in X}w(x,y)(|u(x)|^p + |u(y)|^p - |u(x)|^{p-1}|u(y)| - |u(x)||u(y)|^{p-1}).
\end{align*}
The conclusion now follows from the following inequality
$$
a^p + b^p - a^{p-1}b - ab^{p-1} \geq 0 \qquad \forall\; a,b \geq 0, \quad \forall\, p \in (1,\infty)
$$
which can be established by elementary calculus as we now show. The inequality holds for $a=0$ or $b=0$, so assuming that $b>0$, dividing through by $b^p$  and setting $t\coloneqq a/b$, it is equivalent to prove that
$$
\beta(t) \coloneqq t^p +1 - t^{p-1} - t \geq 0 \qquad \mbox{for } t \geq 0.
$$
Note that $\beta(0)=1$ and $\beta(t) \to \infty$ as $t \to \infty$. We have
$$
\beta'(t) = pt^{p-1} -(p-1)t^{p-2}-1
$$
and thus
$$
\beta'(t)<0 \mbox{ for all } t  \mbox{ small}, \quad \beta'(1)=0, \quad \beta'(t)\to \infty \mbox{ as } t \to \infty.
$$
Moreover,
$$
\beta''(t)= p(p-1)t^{p-2} -(p-1)(p-2)t^{p-3}= p(p-1)t^{p-3}\left(t-\frac{p-2}{p}\right).
$$
Thus, if $1<p\leq 2$, then $\beta''\geq 0$ and $\beta'$ is increasing. If $p>2$, then
$$
\beta''(t)<0 \mbox{ for } t<\frac{p-2}{p}<1, \qquad \beta''(t)\geq 0 \mbox{ for } t \geq \frac{p-2}{p}.
$$
Hence, $\beta'$ is decreasing until ${(p-2)}/{p}$ and increasing afterwards. In any case, $\beta'(t)<0$ for $t<1$ and $\beta'(t)>0$ for $t>1$, so
$$
\min_{t>0} \beta(t) = \beta(1) = 0.
$$

By the equivalence between \ref{m-accretivity2} and \ref{m-accretivity1} of Definition \ref{def:m-accretivity}, we then have
$$
	\left\| (\operatorname{id} + \lambda \Delta)u \right\|_p\geq \|u \|_p \qquad \mbox{for every } u \in C(X), \, \lambda >0.
	$$
We conclude the lemma for $p\in (1,\infty)$ by noticing that $\operatorname{id}+\lambda\Delta$ injective and $C(X)$ finite imply that $\operatorname{id}+\lambda\Delta$ surjective by linearity. The case $p=1$ is addressed in the more general case of Corollary~\ref{cor:accretivityL-finite} in Appendix~\ref{sec:appendix2}.
\end{proof}

As a final comment, we observe that if two operators are accretive on a given Banach space, this will not automatically imply that the composition (if defined) of the two operators is accretive. See the following simple example.

\begin{example}\label{ex:1}
Consider the finite birth-death chain $G=\left(X,w,\kappa,\mu\right)$ (see Figure \ref{fig:ex:1}) with
\begin{itemize}
	\item $X= \left\{ x_1, x_2, x_3, x_4  \right\}$;
	\item $w(x_i,x_j)=1$ if and only if $|i-j|=1$ and zero otherwise;
	\item $\kappa \equiv 0$;
	\item $\mu \equiv 1$.
\end{itemize}
 Define now $\phi \colon \R \to \R$ by $\phi(s)\coloneqq s|s|^3$. Since $\phi$ is monotone, thanks to Remark \ref{rem:accretivity_l^2}, $\Phi$, the canonical extension of $\phi$ to $\ell^2(X,\mu)$, is accretive. Consider now the graph Laplacian $ \Delta \colon \ell^2\left(X,\mu\right)\to \ell^2\left(X,\mu\right)$ associated to $G$ which in this case acts as
$$
\Delta u(x_i) =\begin{cases}
u(x_{1}) - u(x_{2}) & \mbox{if } i=1, \\
 2u(x_{i}) -u(x_{i-1}) - u(x_{i+1}) & \mbox{if } i = 2, 3,\\
u(x_{4}) - u(x_{3}) & \mbox{if } i=4.
\end{cases}
$$
By Proposition \ref{lem:m-accretivity_for_finite_graphs}, $\Delta$ is accretive. 

On the other hand, the operator $\mathcal{L}\coloneqq\Delta\Phi$ is not accretive. Indeed, a computation shows that if $u$ and $v$ are defined by
$$
 u(x_i)=\begin{cases}
3, & \mbox{for } i=1,\\
4, & \mbox{for } i=2,\\
0 & \mbox{otherwise},
\end{cases}\qquad v(x_i)=\begin{cases}
3, & \mbox{for } i=2,\\
0 & \mbox{otherwise},
\end{cases}
$$
then
$$
\langle \mathcal{L} u-  \mathcal{L} v, u-v  \rangle_{\ell^2}=-13 <0
$$
and therefore $\mathcal{L}$ is not accretive as claimed.
\end{example}
As it is shown in Corollary \ref{cor:accretivityL-finite}, the operator $\mathcal{L}=\Delta\Phi$  is accretive on $\ell^1(X,\mu)$ for every finite graph. This result and the above example show that accretivity is a property related not only to the action of the operator but to the norm on the underlying space as well.

\begin{figure}
	\begin {center}
	\begin{tikzpicture}[-latex, auto,node distance =3 cm ,on grid ,
	semithick ,
	state2/.style ={ circle ,top color =white , bottom color = gray ,
		draw,black , text=black , minimum width =1 cm},
	state3/.style ={ circle ,top color =white , bottom color = white ,
		draw,white , text=white , minimum width =1 cm},
	state/.style={circle ,top color =white , bottom color = white,
		draw,black , text=black , minimum width =1 cm}]
	
	\node[state] (E) [] {$x_{1}$};
	\node[state] (F) [right=of E] {$x_{2}$};
	\node[state] (G) [right=of F] {$x_3$};
	\node[state] (H) [right=of G] {$x_{4}$};
%	\node[state3] (I) [right=of H] {};

	\path (G) edge [black,-] node[] {$w(x_3,x_4)$} (H);
	\path (H) edge [black,-] node[] {} (G);
	\path (G) edge [black,-] node[] {} (F);
	\path (F) edge [black,-] node[] {$w(x_2,x_3)$} (G);
	\path (E) edge [black,-] node[] {$w(x_1,x_2)$} (F);
	\path (F) edge [black,-] node[] {} (E);
%	\path[dashed] (H) edge [black,-] node[] {$w(x_4,x_5)$} (I);
%	\path[dashed] (I) edge [black,-] node[] {} (H);
	\end{tikzpicture}
	\caption{Visual representation of the graph $G$ of the Example \ref{ex:1}.}\label{fig:ex:1}
\end{center}
\end{figure}
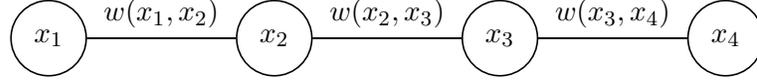

%%%%%%%%%%%%%%%%%%%%%%%%%%%%%%%%%%%%%%%%%%%%%%%%%	
\section{The Cauchy model problem}\label{sec:Cauchy_model_problem}
%%%%%%%%%%%%%%%%%%%%%%%%%%%%%%%%%%%%%%%%%%%%%%%%%

Let $f \colon (0,T)\times X \to \R$ and $u_0 \colon X \to \R$. Given a graph $G$, let us consider the following Cauchy problem for the generalized porous medium equation (GPME) (or filtration equation):

\vspace{0.3cm}
\noindent\textbf{Problem:}
		\begin{equation}\tag*{Cauchy-GPME}\label{eq:C-D}
		\begin{cases}
		\partial_t u(t,x) + \Delta\Phi u(t,x) = f(t,x) & \mbox{for every } (t,x) \in (0,T)\times X,\\
		\lim_{t\to 0^+} u(t,x) = u_0(x) & \mbox{for every } x\in X
		\end{cases}
		\end{equation}	
where $\Phi \colon C(X) \to C(X)$ is the canonical extension of a function $\phi \colon  \R \to \R$ such that $\phi$ is strictly monotone increasing, $\phi(\R)=\R$ and $\phi(0)=0$. If $\phi(s)= s^m \coloneqq s|s|^{m-1}$, then we will call the above equation the porous medium equation (PME) when $m>1$ and the fast diffusion equation (FDE) when $0<m<1$. Clearly, when $m=1$ and $f \equiv 0$ we recover the classic heat equation. The function $f$ is called the \emph{forcing term}. For notational convenience, we specify the time interval $(0,T)$ but everything we will say can be generalized to any (not necessarily bounded above) interval $(a,b)\subset \R$.

\vspace{0.3cm}
\noindent The \textquotedblleft$+$\textquotedblright \, sign in our equation comes from the fact that we are considering the formal graph Laplacian which corresponds to minus the second derivative in the Euclidean case.

We now introduce  the various classes of solutions for the \ref{eq:C-D} problem in order of increasing regularity. The  weakest notion of solution is obtained by means of a discretization and approximation procedure in the time variable. More precisely, we first need to discretize the time interval $(0,T)$ with respect to the forcing term $f$ such that the corresponding time-discretization $\boldsymbol{f}_n$ of $f$ is \textquotedblleft close\textquotedblright to $f$ in a way that will be made clear next.

\begin{definition}[$\epsilon$-discretization]\label{def:epsilon-discretization}
	Given a time interval $[0,T]$ with $T<\infty$ and a forcing term $f \in L^1([0,T] ; \ell^1\left(X,\mu\right))$, we define a partition of the time interval
	$$
	\mathcal{T}_n \coloneqq \left\{ \{t_k\}_{k=0}^n \mid 0= t_0<t_1<\ldots<t_n\leq T \right\}
	$$
	and a time-discretization $\boldsymbol{f}_n$ of $f$
	$$
	\boldsymbol{f}_n\coloneqq\left\{ \{f_k\}_{k=1}^{n} \mid f_k \in \ell^1(X,\mu),\;  f_k(x)\coloneqq f(t_k,x)\right\}.
	$$
	Having fixed $\epsilon >0$, we call $\mathcal{D}_\epsilon\coloneqq (\mathcal{T}_n,\boldsymbol{f}_n)$ an \emph{$\epsilon$-discretization} of $([0,T]; f)$ if
	\begin{itemize}
		\item $t_k-t_{k-1} \leq \epsilon$ for every $k=1,\ldots, n$ and $T- t_n \leq \epsilon$;
		\item $\sum_{k=1}^{n} \int_{t_{k-1}}^{t_{k}} \left\| f(t) - f_k \right\| dt \leq \epsilon$.
	\end{itemize}
\end{definition}
\begin{remark}\label{rem:existence_discretization}
 Definition \ref{def:epsilon-discretization} is well-posed. If $f\in L^1([0,T] ; \ell^1\left(X,\mu\right))$, then for every $\epsilon >0$ there exists an $\epsilon$-discretization $\mathcal{D}_\epsilon$ of $([0,T]; f)$, see \cite[Lemma 4.1]{evans1977nonlinear}.
 \end{remark}

Now, given an $\epsilon$-discretization $\mathcal{D}_\epsilon$, consider the following system of difference equations which arises from an implicit Euler-discretization of the \ref{eq:C-D}:

\begin{equation}\label{implicit_Euler}
\frac{u_k - u_{k-1}}{\lambda_k} + \Delta\Phi u_k = f_k, \qquad
\lambda_k\coloneqq t_k - t_{k-1}\mbox{ and } k=1,\ldots,n
\end{equation}
with $u_0$ given. Writing $\mathcal{L} = \Delta\Phi $, we then require that every $u_k$ belongs to
\begin{equation*}
\dom(\mathcal{L})=\left\{ u \in  \ell^{1}\left(X,\mu\right) \mid  \Phi u\in \dom\left(\Delta\right), \Delta\Phi u \in  \ell^{1}\left(X,\mu\right)  \right\}.
\end{equation*}

\begin{definition}[$\epsilon$-approximate solution]\label{epsilon-approximation}
If the system \eqref{implicit_Euler} admits a solution $\boldsymbol{u}_\epsilon =\left\{u_k\right\}_{k=1}^n$ such that $u_k\in \dom(\mathcal{L})$ for every $k=1,\ldots, n$, then we define $u_\epsilon$ as the piecewise constant function
\begin{equation}\label{epsilon_approximation}
u_\epsilon(t)\coloneqq \begin{cases}
	\sum_{k=1}^n u_k\mathds{1}_{(t_{k-1},t_k]}(t) & \mbox{for } t\in (0, t_n],\\
	u_0 & \mbox{for } t=0
\end{cases}
\end{equation}
and we call $u_\epsilon$ an \emph{$\epsilon$-approximate solution} of the \ref{eq:C-D} (subordinate to $\mathcal{D}_\epsilon$).
\end{definition}
We then have the following definition of a \emph{mild solution} which first appeared as a formal definition in \cite{crandall1980regularizing}. It can be viewed as a uniform limit of ``numerical approximations'' to solutions of the \ref{eq:C-D} obtained by the system of difference equations \eqref{implicit_Euler}.

\begin{definition}[Mild solution]\label{def:weak_solution}
If $T<+\infty$, we say that $u \colon [0,T]\to \ell^1\left(X,\mu\right)$    is a \emph{mild solution} of the  \ref{eq:C-D} problem if $u \in C\left( [0,T]; \ell^1\left(X,\mu\right)\right)$ and $u$ is obtained as a uniform limit of $\epsilon$-approximate solutions. Namely, for every $\epsilon>0$ there exists an $\epsilon$-discretization  $\mathcal{D}_\epsilon$ of $([0,T]; f)$, as in Definition \ref{def:epsilon-discretization}, and an $\epsilon$-approximate solution $u_\epsilon$  subordinate to $\mathcal{D}_\epsilon$, as in \eqref{epsilon_approximation}, such that
	\begin{equation*}
	\left\|u(t) - u_\epsilon(t) \right\| < \epsilon \qquad \mbox{for every } t\in [0,t_n]\subseteq [0,T].
	\end{equation*}
	
If $T = +\infty$, then  we say that $u$ is a mild solution of the \ref{eq:C-D} if the restriction of $u$ to each compact subinterval $[0,a]\subset [0,+\infty)$ is a mild solution of the  \ref{eq:C-D} on $[0,a]$.
\end{definition}

We next introduce two further classes of solutions, namely, strong and classic solutions. Following the definitions, we will discuss the relationship between these notions.
\begin{definition}[Strong solution]\label{def:strong_solution}
	We say that  $u \colon [0,T] \to \ell^1\left(X,\mu\right)$  is a \emph{strong solution} of the \ref{eq:C-D} problem if
	\begin{itemize}
		\item $u(t) \in \overline{\dom(\mathcal{L})}$ for every $t \in [0,T]$;
		\item $u \in C\left([0,T]; \ell^1\left(X,\mu\right)\right)\cap W^{1,1}_{\textnormal{loc}}((0,T);\ell^1\left(X,\mu\right))$;
		\item $\partial_t u(t,x) + \Delta\Phi u(t,x) = f(t,x)$ for almost every $t\in (0,T)$;
		\item $u(0)=u_0$.
	\end{itemize}
\end{definition}

\begin{definition}[Classic solution]\label{def:classical_solution}
	We say that $u \colon [0,T] \to \ell^1\left(X,\mu\right)$ is a \emph{classic solution} of the \ref{eq:C-D} problem if
	\begin{itemize}
		\item $u(t) \in \overline{\dom(\mathcal{L})}$ for every $t \in [0,T]$;
		\item $u \in C\left( [0,T]; \ell^1\left(X,\mu\right)\right)\cap C^1\left( (0,T); \ell^1\left(X,\mu\right)\right)$;
		\item $\partial_t u(t,x) + \Delta\Phi u(t,x) = f(t,x)$ for every $t\in (0,T)$;
		\item $u(0)=u_0$.
	\end{itemize}
\end{definition}
In the literature, mild solutions are also known by other names: they are called \emph{limit solutions} in \cite{lakshmikantham1981nonlinear} and \emph{weak solutions} in \cite{kobayasi1984nonlinear}. In \cite{benilan1972solutions}, P. B{\'e}nilan and H. Br{\'e}zis introduced the definition of \emph{faible} (i.e., \emph{weak}) solutions of the abstract Cauchy problem
\begin{equation}\label{abstract_Cauchy}
	\begin{cases}
		\partial_tu(t) + \mathcal{A}u(t) = f(t) &\mbox{for } t\in (0, T),\\
		u(0)= u_0
	\end{cases}
\end{equation}
as a uniform limit of strong solutions $u_n$ of \eqref{abstract_Cauchy} with $f$ replaced by $f_n$ where $f_n\to f$ in $L^1([0,T]; E)$. %Notice that, even in this case,  the definitions of faible and mild solution are very close.

Clearly, a classic solution is a strong solution. The fact that a strong solution is a mild solution  assuming that $f \in L^1_{\textnormal{loc}}([0,T] ; \ell^1\left(X,\mu\right))$ is a standard result, e.g. \cite[Theorem 1.4]{benilan1988evolution}. Therefore, assuming $f$ is strongly measurable and locally Bochner integrable, we have compatibility of the three different definitions in the sense that classic solution $\Rightarrow$ strong solution $\Rightarrow$ mild solution in order of descending ``regularity.''  As a final remark, we observe that a mild solution may not be differentiable and does not necessarily satisfy the \ref{eq:C-D} in a pointwise sense. Nonetheless, this notion is known as the most natural one of
the generalized notions of solutions of \eqref{abstract_Cauchy}.
%%%%%%%%%%%%%%%%%%%%%%%%%%%%%%
\section{Existence and uniqueness of mild solutions}\label{sec:existence_weak}\label{sec:exisntence&uniqueness}
%%%%%%%%%%%%%%%%%%%%%%%%%%%%%
The theory of nonlinear operators on Banach spaces is well-established. % see for example some of the original seminal works that gave birth to this fruitful theory, \cite{kato1970evolution,crandall1972nonlinear,benilan1972equations,brezis1973semi,brezis1974monotone,crandall1976introduction,crandall1986nonlinear}.
We refer the interested reader to \cite[Chapter 3]{lakshmikantham1981nonlinear}, \cite[Chapter 4]{barbu2010nonlinear} or \cite[Appendix A]{andreu2004parabolic} and references therein for well-organized summaries of all of the main results.

If the operator $\mathcal{L}=\Delta\Phi$ is accretive and such that for every $\epsilon>0$ there exists an $\epsilon$-approximate solution as in Definition \ref{epsilon-approximation}, then it is possible to infer the existence and uniqueness of mild solutions for the \ref{eq:C-D}, relying on some consequences of a result due to P. B{\'e}nilan, see \cite{benilan1972equations} and \cite[Theorem 3.3]{benilan1988evolution}, which is an extension of the famous Crandall-Liggett theorem, see \cite{crandall1971generation}. The main idea is the following: Given $u_0\in \ell^1\left(X,\mu\right)$ and an $\epsilon$-discretization $\mathcal{D}_\epsilon$ as in Definition~\ref{def:epsilon-discretization}, then solving system \eqref{implicit_Euler} means solving the equation
\begin{equation*}
(\operatorname{id} + \lambda_k \Delta \Phi)u_k= u_{k-1} + \lambda_kf_k
\end{equation*}
with
$$
u_k \in \ell^1\left(X,\mu\right), \qquad  \Phi u_k\in\dom(\Delta), \qquad \Delta\Phi u_k\in \ell^1(X,\mu)
$$
for every $k=1,\ldots,n$ where $\lambda_k>0$ and $f_k\in \ell^1\left(X,\mu\right)$.
This is doable, in particular, if
\begin{equation}\label{eq:semi_lin_elliptic}
(\operatorname{id} + \lambda \Delta \Phi) u  = g
\end{equation}
is solvable for any $g \in \ell^1(X,\mu)$, $\lambda>0$ and the solution $u \in \ell^1(X,\mu)$ is such that $\Phi u\in\dom(\Delta)$ and $\Delta\Phi u\in \ell^1(X,\mu)$. Therefore, if $\mathcal{L}$ is $m$-accretive, then we would get existence and uniqueness of mild solutions in one step.

For example, in Euclidean case, when $X=\Omega$ is a bounded domain in $\R^n$, then the accretivity property holds for $\mathcal{L}\coloneqq \Delta\Phi$ defined on
$$
\dom\left(\mathcal{L}\right)\coloneqq\left\{u \in L^1\left(\Omega\right) \mid \Phi(u) \in W^{1,1}_0(\Omega), \Delta\Phi u\in L^1(\Omega) \right\}
$$
where the Laplace operator $\Delta$ is understood in the sense of distributions. The difficult part is to prove the $m$-accretivity, i.e., to prove that for every $g \in L^1(\Omega)$ there exists $u\in \dom\left(\mathcal{L}\right)$ that is a solution for equation \eqref{eq:semi_lin_elliptic} for any $\lambda >0$. To circumvent the direct approach, it is common to switch to an equivalent formulation, namely, having defined $v\coloneqq\Phi u$ and $u=\Psi v=\Phi^{-1} v$, the question is whether the equation
$$
(\Psi + \lambda \Delta) v=g
$$
admits a solution $v$ in
$$
\left\{v \in W^{1,1}_0(\Omega) \mid  \Delta v\in L^1(\Omega) \right\}.
$$
The positive answer to this question in the Euclidean case was given by H. Br{\'e}zis and W. Strauss in \cite{brezis1973semi}. In particular, the trick of this approach is to relate the $m$-accretive property of the nonlinear operator $\mathcal{L}$ to suitable properties of the linear operator $\Delta$ which is easier to handle.

The main issue in the discrete setting is to prove existence of solutions under minimal assumptions on the underlying graph $G$. Indeed, while the accretivity of $\mathcal{L}$ can be established for any finite graph, see Corollary \ref{cor:accretivityL-finite}  in Appendix \ref{sec:appendix2}, the accretivity of $\mathcal{L}$ can be a tricky property to prove for more general graphs. Moreover,
the hypothesis required to make use of the result in \cite{brezis1973semi}, which are satisfied by the Euclidean Laplacian $\Delta$ over bounded domains $\Omega$, are, in general, not satisfied by the graph Laplacian. Consequently, we are forced to step back and to prove ``by hand'' the existence of $\epsilon$-approximate solutions $u_\epsilon(t)$ for every  $\epsilon>0$ with the property that $u_\epsilon(t)$ belongs to a dense subset of $\dom(\mathcal{L})$ where $\mathcal{L}$ is accretive.

The idea is to find a solution $u$ to $(\operatorname{id}+\lambda \Delta\Phi) u=g$ by building $u$ as a limit of a sequence $\{ \Psi v_n\}$ where the $v_n$ are solutions of $(\Psi + \lambda \Delta_n) v_n=g_n$ on suitable restrictions of the graph $G$ to finite subgraphs. In particular, we will see that this can be achieved by decomposing $G$ as an infinite ascending chain $\{G_{\textnormal{dir},n}\}_{n=1}^\infty$ of finite connected Dirichlet subgraphs.

After this introduction, we are now ready to prove our main results. In Theorem \ref{thm:main1} we will prove the accretivity of the operator $\mathcal{L}$ on a suitable dense subset of $\dom(\mathcal{L})$ and the surjectivity of $\operatorname{id}+\lambda\mathcal{L}$ on the non-negative/positive cones of $\ell^1(X,\mu)$ and, under three different additional hypotheses, on the entire space $\ell^1(X,\mu)$.  In Theorem \ref{thm:main2}  we will then establish the existence and uniqueness of mild solutions for the \ref{eq:C-D} problem.  As a concluding application, in Corollary \ref{cor:application}, we prescribe some hypotheses on the graph that guarantee that a mild solution is indeed a classic solution.

\subsection{Proofs of the main theorems}\label{ssec:proofs}
Let us recall that $\mathcal{L}$ is the operator
\begin{align*}
	&\mathcal{L} \colon \dom\left( \mathcal{L} \right)\subseteq \ell^{1}\left(X,\mu\right) \to \ell^{1}\left(X,\mu\right) , \\
	&\dom\left( \mathcal{L} \right)=\left\{ u \in  \ell^{1}\left(X,\mu\right) \mid \Phi u\in \dom\left(\Delta\right),\, \Delta\Phi u \in  \ell^{1}\left(X,\mu\right)  \right\}
\end{align*}
whose action is given by
$$
\mathcal{L} u= \Delta\Phi u,
$$
and that for a subset $\Omega \subseteq \dom\left( \mathcal{L} \right)$, we write $\mathcal{L}_{|\Omega}$
for the restriction of $\mathcal{L}$ to $\Omega$. The extra hypotheses listed in Theorem \ref{thm:main1} are
	\begin{enumerate}[label={\upshape(\bfseries H\arabic*)},wide = 0pt, leftmargin = 3em]
	\item\label{m-accretivity_A2}$G$ is locally finite;
	\item\label{m-accretivity_B2} $\inf_{x \in X}\mu(x)>0$;
	\item\label{m-accretivity_C2}  $\sup_{x \in X}\frac{\sum_{y \in X}w(x,y)}{\mu(x)}<\infty$ and $\Phi(\ell^1(X,\mu)) \subseteq \ell^1(X,\mu)$.
\end{enumerate}
\vspace{0.2cm}

\noindent\textbf{Proof of Theorem~\ref{thm:main1}.} 
We divide the proof into several steps. From \textbf{Steps 0} to \textbf{IV} we will assume that $G$ is connected. To help orient the reader, we first give a brief outline of the structure of the proof:  In \textbf{Step 0},  we will introduce a sequence of operators $\Lmin \colon C_c(X) \to \ell^1(X,\mu)$ and discuss that for every graph $G$ there exists a dense subset $\Omega \subseteq \dom(\mathcal{L})$ where $\mathcal{L}$ is accretive. Later, we will show that all solutions of the equation $(\operatorname{id} + \lambda \mathcal{L})u=g$ that we construct along the way belong to $\Omega$. In \textbf{Step I}, assuming that $G$ is finite, we will prove that $\operatorname{id}+\lambda\mathcal{L}$ is surjective and preserves nonnegativity/nonpositivity and also give an upper bound of the norm of the solutions with respect to $g$. This step plays a crucial role in proving the surjectivity of $\operatorname{id}+\lambda\mathcal{L}$ in the infinite case where we will approximate the graph $G$ by an ascending chain of finite Dirichlet subgraphs. In \textbf{Step II}, given $\lambda>0$ and $g\in \ell^1(X,\mu)$, we will show that there exists a sequence of compactly supported functions $u_n$ and $u\in \ell^1(X,\mu)$ such that $\lim_{n\to \infty}\|u_n-u\|=0$ and $\lim_{n\to \infty}\|(\boldsymbol{\mathfrak{i}}_{n,\infty}\operatorname{id}_n\boldsymbol{\pi}_n+\lambda\Lmin)u_n- g\|=0$. Using this construction,  in \textbf{Step III} we will show that $u$ solves $(\operatorname{id} + \lambda \mathcal{L})u=g$ for every $g\in\ell^{1,\pm}(X,\mu)$ and that $u\in \Omega$. In \textbf{Step IV-H1,-H2,-H3} we will prove that $u$ solves $(\operatorname{id} + \lambda \mathcal{L})u=g$ for every $g\in\ell^{1}(X,\mu)$ and that $u\in \Omega$ under any one of the three different assumptions \ref{m-accretivity_A2}, \ref{m-accretivity_B2} and \ref{m-accretivity_C2}. Finally, in \textbf{Step V} we remove the assumption of connectedness that we used while proving \textbf{Steps 0} to \textbf{IV}. 

\vspace{0.5cm}
\noindent\textbf{Step 0} (When $G$ is connected, there exists a dense subset $\Omega$ of $\dom(\mathcal{L})$ where $\mathcal{L}_{|\Omega}$ is accretive)\textbf{:} This is exactly the content of  Lemma \ref{lem:L_Omega_accretive} in Appendix \ref{sec:appendix2}. %, with the only difference that in this case we have the extra assumption of the connectedness of $G$. 
To help orient the reader, we recall here the notations involved and the definition of $\Omega$. If $G$ is finite, then $\Omega=\dom(\mathcal{L})=C(X)$. If $G$ is infinite, we take an exhaustion
$\{X_n\}_{n=1}^\infty$ of $X$, i.e., a sequence of subsets $X_n$ of $X$ such that $X_n \subseteq X_{n+1}$
and $X = \cup_{n=1}^\infty X_n$, where we assume that each $X_n$ is additionally finite, along with the canonical embedding $\boldsymbol{\mathfrak{i}}_{n,\infty}$ and the canonical projection $\boldsymbol{\pi}_n$ for each $X_n$:
\begin{align*}
	&\boldsymbol{\mathfrak{i}}_{n,\infty} \colon C(X_n)\to C(X) \quad &\boldsymbol{\mathfrak{i}}_{n,\infty}u(x) \coloneqq \begin{cases}
		u(x) & \mbox{if } x \in X_n,\\
		0   & \mbox{if }  x \in X\setminus X_n;
	\end{cases}\\
	&\boldsymbol{\pi}_n \colon C(X) \to C(X_n) \quad &\boldsymbol{\pi}_nu(x) \coloneqq u(x) \mbox{ for every } x \in X_n.
\end{align*}
At this point, the exhaustion $\{X_n\}_{n=1}^\infty$ can be arbitrary but should consist of finite sets. 
We then define the operators $\Lmin$ as in Definition \ref{def:Lmin}, namely,
$$
\Lmin \colon \dom\left( \Lmin \right)\subseteq \ell^{1}\left(X,\mu\right) \to \ell^{1}\left(X,\mu\right)
$$
with
\begin{align*}
	\dom\left( \Lmin \right)\coloneqq C_c(X), \quad \Lmin u\coloneqq \boldsymbol{\mathfrak{i}}_{n,\infty}\Deltadirn\Phi\boldsymbol{\pi}_{n} u
\end{align*}
where $\Deltadirn$ is the graph Laplacian associated to the Dirichlet subgraph $G_{\textnormal{dir},n} \subseteq G$ on the node set $X_n$. Then, the set $\Omega$ is defined as
	\begin{equation*}\label{eq:Omega}
	\Omega\coloneqq \{u \in \dom(\mathcal{L}) \mid \exists\, \{u_n\}_n \mbox{ s.t. } \operatorname{supp}u_n\subseteq X_n,\; \lim_{n\to \infty}\|u_n -u \|=0,\; \lim_{n\to \infty}\|\Lmin u_n -\mathcal{L}u \|=0 \}
\end{equation*}
where $\operatorname{supp}u_n$ denotes the support of the function $u_n$.
We have $\overline{\Omega}=\overline{\dom(\mathcal{L})}=\ell^1(X,\mu)$ by Lemma \ref{lem:FC} and that $\mathcal{L}_{|\Omega}$  is accretive by Lemma \ref{lem:L_Omega_accretive}.

\vspace{0.5cm}
\noindent\textbf{Step I} (When $G$ is finite and connected, $\operatorname{id} +\lambda\mathcal{L}$ is bijective and preserves nonnegativity/nonpositivity)\textbf{:} Assume that $G$ is finite and connected, i.e., $|X|=n$. In this case,
$$
\dom(\Delta)=C(X)\simeq \R^n, \quad \ell^1\left(X,\mu\right)= \left(C(X), \|\cdot\| \right) \quad \mbox{ with } \|u\|= \sum_{x\in X}|u(x)|\mu(x).
$$

Fix now $\lambda >0$. Writing $\psi \coloneqq \phi^{-1}$ and $v\coloneqq \Phi u$, we can rewrite equation
\begin{equation}\label{eq:step-I-1.0}
(\operatorname{id} + \lambda \Delta\Phi) u=g
\end{equation}
in the equivalent form
\begin{equation}\label{eq:step-I-1}
(\Psi + \lambda \Delta) v  = g.
\end{equation}
Clearly, $\psi$ is strictly monotone increasing, $\psi(\R)=\R$ and $\psi(0)=0$.

Let us enumerate the nodes of $X$, that is, we write $X= \{ x_1, x_2, \ldots, x_n \}$. Owing to the isomorphism between $C(X)$ and $\R^n$, we identify $n$-dimensional vectors and real-valued functions on $X$ in the standard way, that is, given $v \in C(X)$ we associate to $v$ the vector $\boldsymbol{v}=(v_1,\ldots,v_n)\coloneqq(v(x_1),\ldots, v(x_n))$ and vice-versa. Define  $M \colon \R^n \to \R^n$ by
$$
M\boldsymbol{v}\coloneqq (\Psi + \lambda \Delta)\boldsymbol{v}.
$$
Let us observe that:
\begin{enumerate}[i)]
	\item $(\Psi\boldsymbol{v})_i=\psi(v_i)$ for every $i=1,\ldots,n$ where $\psi \colon \R \to \R$ is surjective and strictly monotone increasing;
	\item For every $\lambda>0$, $\lambda\Delta$ is a diagonally dominant matrix (e.g., \cite[Definition 6.1.9]{horn2012matrix}), i.e.,
	$$
	\left(\lambda\Delta\right)_{i,i}= \frac{\lambda}{\mu(x_i)}\left(\kappa(x_i) + \sum_{\substack{j=1\\j\neq i}}^n w(x_i,x_j)\right) \geq \frac{\lambda}{\mu(x_i)}\sum_{\substack{j=1\\j\neq i}}^n w(x_i,x_j)= \sum_{\substack{j=1\\j\neq i}}^n \left|\left(\lambda\Delta\right)_{i,j}\right|, \quad \forall\; i=1,\ldots,n.
	$$
\end{enumerate}
Therefore, by \cite[Theorem 1]{willson1968solutions}, for every $\boldsymbol{g} \in \R^n$ there exists a unique solution $\boldsymbol{v}$ to the equation
$$
M\boldsymbol{v} = \boldsymbol{g}.
$$

We now show that the norm of the solution is bounded above by the norm of $g$. Let $v$ be the solution of \eqref{eq:step-I-1} with right-hand side $g$. Since $\psi$ is strictly monotone increasing and $\psi(0)=0$, we get
$$
\operatorname{sgn}\left(v(x)\right) = \operatorname{sgn}\left(\Psi v(x)\right).
$$
By Proposition \ref{prop:nonnegativity}, and recalling that $v=\Phi u$, i.e., $u = \Psi v$, it follows that
$$
\sum_{x\in X} \Delta v (x) \operatorname{sgn}\left(\Psi v(x)\right)\mu(x)= \sum_{\substack{x\in X\colon\\u(x)\neq 0}} \Delta \Phi u(x)  \operatorname{sgn}\left(u(x)\right)\mu(x)\geq0.
$$
Therefore, we conclude
\begin{align}\label{eq:step-I-3}
	\|u\| =\|  \Psi v  \| &= \sum_{x\in X} \left|\Psi v(x) \right|\mu(x)\\\nonumber
	&=  \sum_{x\in X} \Psi v(x)\operatorname{sgn}\left(\Psi v(x)\right)\mu(x)\\\nonumber
	&=  \sum_{x\in X} g(x)\operatorname{sgn}\left(\Psi v(x)\right)\mu(x)  -  \lambda\sum_{x\in X} \Delta v(x) \operatorname{sgn}\left(\Psi v(x)\right)\mu(x)\\\nonumber
	&\leq  \sum_{x\in X} g(x)\operatorname{sgn}\left(\Psi v(x)\right)\mu(x)\\\nonumber
	&\leq \| g\|.
\end{align}
%and thus
%\begin{equation}\label{eq:step-I-2}
%	\| u \| \leq  \| g  \|.
%\end{equation}

By Case 2) of Theorem~\ref{thm:min_principle}, if $g\geq 0$, then $v\geq 0$, and if $g\leq 0$, then $v\leq 0$. Consequently, $u=\Psi v$ has the same sign as $g$. Therefore, if $G$ is finite we can conclude that for every $g \in \ell^{1,\pm}(X,\mu)$, the unique solution $u$ of \eqref{eq:step-I-1.0} belongs to $\ell^{1,\pm}(X,\mu)$ and satisfies $\|u \| \leq \|g\|$.

\vspace{0.5cm}

\noindent\textbf{Step II} (Constructing a solution when $G$ is infinite and connected)\textbf{:}   We want to show that if $G$ is infinite and connected, then for every fixed $\lambda>0$ and $g \in \ell^1(X,\mu)$ there exists $u\in \ell^1(X,\mu)$ and a sequence $\{u_n\}_n$   such that
\begin{subequations}
	\begin{equation}\label{eq:l1convergence2}
	\operatorname{supp}u_n\subseteq X_n;
	\end{equation}
	\begin{equation}\label{eq:l1convergence}
		\lim_{n\to \infty}\|u_n -u\|=0;
	\end{equation}
	\begin{equation}\label{eq:l1convergence0}
		\lim_{n\to \infty}\|\left(\boldsymbol{\mathfrak{i}}_{n,\infty}\operatorname{id}_n\boldsymbol{\pi}_{n} + \lambda \Lmin\right) u_n- g\|=0
	\end{equation}
\end{subequations}
where $\operatorname{id}_n$ is the identity operator on $C(X_n)$. We divide this step into two sub-steps
consisting of the cases when $g$ is nonnegative (or nonpositive) and then general $g$.

\vspace{0.2cm}
\noindent\textbf{Step II-1} ($g\in \ell^{1,\pm}(X,\mu)$)\textbf{:} Assume that $g\in \ell^{1,+}(X,\mu)$. By Lemma \ref{lem:chain1}, we can choose the exhaustion $\{X_n\}_{n=1}^{\infty}$ with the following additional properties: 
\begin{equation}\label{eq:property_sets1}
	X_{n}\subset X_{n+1}, \qquad X=\bigcup_{n=1}^\infty X_n
\end{equation}
and such that the set
\begin{equation}\label{eq:property_sets2}
	\{ x \in X_n \mid x\sim y \mbox{ for some } y \in X_{n+1}\setminus X_n \}
\end{equation}
is not empty for all $n$. For each $n$, we define the subgraph 
\begin{equation}\label{eq:def_Dir_subgraph}
G_{\textnormal{dir},n} = (X_n, w_n, \kappa_{\textnormal{dir},n}, \mu_{n})\subset G
\end{equation} 
as a Dirichlet subgraph of $G$,  see Definition \ref{def:dir_subgraph}. That is,
\begin{itemize}
	\item $w_n \equiv w_{|X_{n}\times X_{n}}$;\
	\item $\mu_n \equiv  \mu_{|X_{n}}$;
	\item for every $x \in X_n$, $b_{\textnormal{dir},n}(x)= \sum_{y \in \mathbullet{\partial}X_n}w(x,y)$;
	\item for every $x \in X_n$, $\kappa_{\textnormal{dir},n}(x)  = \kappa_{|X_n}(x) + b_{\textnormal{dir},n}(x)$.
\end{itemize}
If we define
\begin{align*}
&\mathring{\partial}X_{n,n+1}\coloneqq\{ x \in X_{n} \mid \exists y \in X_{n+1}\setminus X_n \mbox{ such that } x \sim y  \}, \\
&\mathbullet{\partial}X_{n,n+1}\coloneqq\{ y \in X_{n+1} \setminus X_n \mid \exists x \in X_n \mbox{ such that } x \sim y  \}
\end{align*}
which are not empty by construction and
\begin{equation*}\label{eq:thm_existence_weak_1}
b'_{\textnormal{dir},n}(x)= \sum_{y \in \mathbullet{\partial}X_{n,n+1}}w(x,y)
\end{equation*}
then, for every $x\in X_n$, it holds that
\begin{align*}
\kappa_{\textnormal{dir},n}(x)  &= \kappa_{|X_n}(x) + b_{\textnormal{dir},n}(x)\\
&=\kappa_{|X_n}(x) + \sum_{y \in \mathbullet{\partial}X_n}w(x,y)\\
&= \kappa_{|X_{n+1}}(x) +\sum_{y \in X\setminus X_n}w(x,y)\\
&= \kappa_{|X_{n+1}}(x) + \sum_{y \in X\setminus X_{n+1}}w(x,y) + \sum_{y \in X_{n+1}\setminus X_{n}}w(x,y)\\
&= \kappa_{|X_{n+1}}(x) + \sum_{y \in \mathbullet{\partial}X_{n+1}}w(x,y) + \sum_{y \in \mathbullet{\partial}X_{n,n+1}}w(x,y)\\
&= \kappa_{|X_{n+1}}(x) + b_{\textnormal{dir},n+1}(x) + b'_{\textnormal{dir},n}(x)\\
&= \kappa_{\textnormal{dir},n+1}(x) +  b'_{\textnormal{dir},n}(x).
\end{align*}
So, the collection $\{G_{\textnormal{dir},n}\}_{n\in \N}$ is a sequence of connected finite Dirichlet subgraphs such that each subgraph $G_{\textnormal{dir},n}$ is a Dirichlet subgraph of $G_{\textnormal{dir},n+1}$, that is, $$G_{\textnormal{dir},1}\subset \ldots \subset G_{\textnormal{dir},n}\subset G_{\textnormal{dir},n+1} \subset \ldots \subset G.$$
Denoting by 
\begin{equation*}\label{eq:embedding-projection2}
\boldsymbol{\mathfrak{i}}_{n} \colon C(X_n) \hookrightarrow C(X_{n+1}), \quad \boldsymbol{\mathfrak{i}}_{n,\infty} \colon C(X_{n}) \hookrightarrow C(X), \quad  \boldsymbol{\pi}_n \colon C(X) \to C(X_n)
\end{equation*}
the canonical embeddings and projections, respectively, define
$$
g_n \coloneqq  \boldsymbol{\pi}_{n}g \quad \mbox{where } g \in \ell^{1,+}(X,\mu).
$$

From \textbf{Step I}, for every $n\in \N$ there exist $\hat{v}_n \in C(X_n)$ such that $\hat{v}_{n} \geq 0$  and
\begin{equation}\label{def:hatv_n}
(\Psi + \lambda \Delta_{\textnormal{dir}, n})\hat{v}_{n}  = g_n.
\end{equation}
Setting
$$
q_n(x)\coloneqq ( \Psi +\lambda\Delta_{\textnormal{dir},n+1}) \boldsymbol{\mathfrak{i}}_{n}\hat{v}_{n}(x) \in C(X_{n+1})
$$
by Lemma \ref{lem:A1}, and the fact that every $G_{\textnormal{dir},n}$ is a Dirichlet subgraph of $G_{\textnormal{dir}, n+1}$, we have
$$
(\Psi + \lambda \Delta_{\textnormal{dir}, n+1})\boldsymbol{\mathfrak{i}}_{n}\hat{v}_{n}(x) =(\Psi + \lambda \Delta_{\textnormal{dir}, n})\hat{v}_{n}(x)  \quad \forall x \in X_n
$$
and
\begin{equation*}
q_n(x)=\begin{cases}
g_n(x) & \mbox{if } x \in X_n,\\
-\frac{\lambda}{\mu(x)}\sum_{y \in X_n}w(x,y)\hat{v}_{n}(y)& \mbox{if } x \in \mathbullet{\partial}X_{n,n+1},\\
0 & \mbox{if } x \in X_{n+1}\setminus (X_n \cup \mathbullet{\partial}X_{n,n+1}).
\end{cases}
\end{equation*}
Since $0\leq \hat{v}_n$ and  $0\leq g_{n+1}$, it follows that $q_n(x) \leq g_{n+1}(x)$ for every $x \in X_{n+1}\setminus X_n$. In particular, from the fact that $g_{n+1}(x)=g_n(x)$ for every $x \in X_n$, we get $q_n\leq g_{n+1}$. By Corollary \ref{cor:min}, we have $\boldsymbol{\mathfrak{i}}_{n}\hat{v}_{n}\leq \hat{v}_{n+1}$, and by the fact that $\psi$ is monotone increasing and $\psi(0)=0$, we have
\begin{equation*}
\Psi\boldsymbol{\mathfrak{i}}_{n}\hat{v}_{n}\leq \Psi \hat{v}_{n+1}\quad \mbox{and}\quad \Psi\boldsymbol{\mathfrak{i}}_{n}\hat{v}_{n}(x)=\boldsymbol{\mathfrak{i}}_{n}\Psi \hat{v}_{n}(x)= \begin{cases}
	\psi (\hat{v}_n(x)) &\mbox{if } x \in X_n,\\
	0 &\mbox{if } x \in X_{n+1}\setminus X_n.
\end{cases}
\end{equation*}

Therefore, $\boldsymbol{\mathfrak{i}}_{n}\Psi \hat{v}_{n}\leq \Psi \hat{v}_{n+1}$. In particular, we get
\begin{align}
&\boldsymbol{\mathfrak{i}}_{n,\infty}\hat{v}_{n}(x)=\boldsymbol{\mathfrak{i}}_{n+1,\infty}\boldsymbol{\mathfrak{i}}_{n}\hat{v}_{n}(x) \leq \boldsymbol{\mathfrak{i}}_{n+1,\infty}\hat{v}_{n+1}(x),\label{eq:monotonicity1}\\
&\boldsymbol{\mathfrak{i}}_{n,\infty}\Psi \hat{v}_{n}(x)=\boldsymbol{\mathfrak{i}}_{n+1,\infty}\boldsymbol{\mathfrak{i}}_{n}\Psi \hat{v}_{n}(x) \leq \boldsymbol{\mathfrak{i}}_{n+1,\infty}\Psi \hat{v}_{n+1}(x)\label{eq:monotonicity2}
\end{align}
for every $x \in X$. 
Moreover, writing $\hat{u}_{n}(x)=\psi\left(\hat{v}_{n}(x)\right)\geq 0$  for every $x \in X_n$ and indicating by $\|\cdot \|_n$ the restriction of $\|\cdot\|$ to $C(X_n)$ from  \eqref{eq:step-I-3} %and \eqref{eq:step-I-2} 
we have
\begin{align*}
0\leq  \hat{u}_{n}(x) \mu(x)&\leq \sum_{x \in X_n}  \hat{u}_{n}(x) \mu(x)\\
&=  \|\hat{u}_n\|_n \leq \|g_n\|_n \leq \|g\|,
\end{align*}
that is, for every fixed $x$, $\hat{u}_{n}(x)$ is bounded uniformly in $n$. In particular, writing
\begin{equation*}\label{re-labelling}
 v_n  \coloneqq \boldsymbol{\mathfrak{i}}_{n,\infty}\hat{v}_{n} \in C_c(X)\qquad \mbox{and} \qquad u_n \coloneqq \boldsymbol{\mathfrak{i}}_{n,\infty}\hat{u}_n \in C_c(X)
\end{equation*}
 it follows that 
\begin{equation}\label{eq:boundedness}
\Psi v_{n}(x) =	u_n(x) \in \left[0, \frac{\|g\|}{\mu(x)}\right].
\end{equation}

Consequently, by \eqref{eq:monotonicity2} and \eqref{eq:boundedness}, for every fixed $x \in X$ we have a sequence
\begin{equation}\label{eq:sequence}
\left\{u_{n}(x)  \right\}_n \coloneqq \left\{\Psi v_{n}(x)  \right\}_n
\end{equation}
that is monotonic and bounded. We can then define $u,v \in C(X)$ such that
\begin{align}
&u(x) \coloneqq \lim_{n\to \infty} u_{n}(x) \quad \mbox{for } x\in X,\label{eq:limit1} \\
&v\coloneqq \Phi u.\label{eq:limit2}
\end{align}

Notice that, by construction, $x \in X_n$ eventually so that $u_{n}(x) = \hat{u}_n(x)$ eventually. In particular, by the continuity of $\phi$ and the fact that $\psi=\phi^{-1}$
$$
v(x) = \lim_{n\to \infty} v_{n}(x)
$$
and the limit is monotone. Moreover, $u_n$ satisfies \eqref{eq:l1convergence2}, i.e.,
$\operatorname{supp}u_n\subseteq X_n$ for every $n$ and 
\begin{equation*}
	(\operatorname{id}_n + \lambda\Deltadirn\Phi)\boldsymbol{\pi}_nu_n=  g_n.
\end{equation*}
Therefore, 
\begin{align*}
	(\boldsymbol{\mathfrak{i}}_{n,\infty}\operatorname{id}_n\boldsymbol{\pi}_{n}   + \lambda \Lmin )u_n &=  (\boldsymbol{\mathfrak{i}}_{n,\infty}\operatorname{id}_n\boldsymbol{\pi}_{n}   + \lambda \boldsymbol{\mathfrak{i}}_{n,\infty}\Deltadirn\Phi\boldsymbol{\pi}_{n} ) u_n\\
	&= \boldsymbol{\mathfrak{i}}_{n,\infty}(\operatorname{id}_n  + \lambda \Deltadirn\Phi )\boldsymbol{\pi}_{n}u_n\\
	&= \boldsymbol{\mathfrak{i}}_{n,\infty}  g_n.
\end{align*}
In particular, since $g \in \ell^1(X,\mu)$,
$$
\lim_{n\to \infty}\|\left(\boldsymbol{\mathfrak{i}}_{n,\infty}\operatorname{id}_n\boldsymbol{\pi}_{n} + \Lmin\right) u_n- g\|=0,
$$
which is exactly \eqref{eq:l1convergence0}.

Let $\|\cdot \|_n$ be the restriction of $\|\cdot\|$ to $C(X_n)$. Since every $X_n$ is finite, from \eqref{eq:step-I-3} in \textbf{Step I} we obtain
$$
\| \Psi \hat{v}_{n}\|_n\leq \|  g_n\|_n
$$
and, consequently, by Fatou's lemma
\begin{align}
	\|u \| &\leq \liminf_{n\to \infty}\| u_n\|
	=\liminf_{n\to \infty} \| \Psi v_{n} \|
	= \liminf_{n\to \infty} \| \Psi \hat{v}_{n} \|_n
	\leq \liminf_{n\to \infty} \| g_n \|_n = \| g \|.\label{eq:contractivity}
\end{align}
In particular, $u =\Psi v$ is in $\ell^{1,+}(X,\mu)$. Finally, by dominated convergence, we get \eqref{eq:l1convergence},
i.e., $\lim_{n\to \infty} \| u_n - u \| =0$.

\vspace{0.2cm}
\noindent\textbf{Step II-2} ($g\in \ell^1(X,\mu)$)\textbf{:} Using the same notation as in \textbf{Step II-1}, we define for $g \in \ell^1(X,\mu)$
\begin{equation*}
	g_n \coloneqq \boldsymbol{\pi}_{n}g,\quad
	g^+_n\coloneqq \max\{0, \; g_n \},\quad
	g^-_n\coloneqq \min\{0, \; g_n \}.
\end{equation*} 
From \textbf{Step I} there exist $\hat{v}_{n}, \hat{v}^+_{n}, \hat{v}^-_{n}\in C(X_n)$ that satisfy
\begin{equation}\label{eq:Laccretive}
	\begin{cases}
		(\Psi + \lambda\Delta_{\textnormal{dir},n})\hat{v}_{n} = g_n,\\
		(\Psi +\lambda\Delta_{\textnormal{dir},n})\hat{v}^+_{n}  = g^+_n,\\
		(\Psi +\lambda\Delta_{\textnormal{dir},n})\hat{v}^-_{n}  = g^-_n.
	\end{cases}
\end{equation}
Define 
\begin{equation}\label{eq:def_u_n}
	\hat{u}_n\coloneqq \Psi \hat{v}_n, \quad u_n \coloneqq \mathbf{\mathfrak{i}}_{n,\infty}\hat{u}_n.
\end{equation}
Clearly, $u_n$ satisfies \eqref{eq:l1convergence2} and, by construction,
\begin{equation*}
	(\operatorname{id}_n + \lambda\Deltadirn\Phi)\boldsymbol{\pi}_nu_n=  g_n.
\end{equation*}
Therefore, 
\begin{align*}
	(\boldsymbol{\mathfrak{i}}_{n,\infty}\operatorname{id}_n\boldsymbol{\pi}_{n}   + \lambda \Lmin )u_n &=  (\boldsymbol{\mathfrak{i}}_{n,\infty}\operatorname{id}_n\boldsymbol{\pi}_{n}   + \lambda \boldsymbol{\mathfrak{i}}_{n,\infty}\Deltadirn\Phi\boldsymbol{\pi}_{n} ) u_n\\
	&= \boldsymbol{\mathfrak{i}}_{n,\infty}(\operatorname{id}_n  + \lambda \Deltadirn\Phi )\boldsymbol{\pi}_{n}u_n\\
	&= \boldsymbol{\mathfrak{i}}_{n,\infty}  g_n,
\end{align*}
that is,
$$
\lim_{n\to \infty}\|\left(\boldsymbol{\mathfrak{i}}_{n,\infty}\operatorname{id}_n\boldsymbol{\pi}_{n} + \lambda \Lmin\right) u_n- g\|=0,
$$
which is exactly \eqref{eq:l1convergence0}. 

Define now
$$\hat{u}^+_n\coloneqq \Psi \hat{v}^+_n,\quad u_n^+\coloneqq\mathbf{\mathfrak{i}}_{n,\infty}\hat{u}^+_n, \quad u^+\coloneqq  \lim_{n\to \infty} u^+_n.$$
In particular, $u^+ \in \ell^1(X,\mu)$ is the monotone limit solution of \eqref{eq:step-I-1.0} obtained in \eqref{eq:sequence} and \eqref{eq:limit1} of \textbf{Step II-1}. In the same way, define $\hat{u}^-_n$, $u^-_n$ and $u^-$. Finally, define $$v^+\coloneqq \Phi u^+, \quad v^-\coloneqq\Phi u^-$$ as in \eqref{eq:limit2} of \textbf{Step II-1}. Let us observe that, by the definitions \eqref{eq:Laccretive}-\eqref{eq:def_u_n}  and Corollary~\ref{cor:min}, and monotone limits, it holds that
\begin{align*}
	& u_n(x) = \hat{u}_n(x)\leq  \hat{u}^+_n(x)\leq u^+(x) \quad \mbox{if } x\in X_n,\\
	& u_n(x) = 0\leq u^+(x) \quad \mbox{if } x\notin X_n,
\end{align*}
and
\begin{align*}
	&  u^-(x)\leq \hat{u}^-_n(x)\leq \hat{u}_n(x)=u_n(x)    \quad \mbox{if } x\in X_n,\\
	& u^-(x)\leq 0= u_n(x) \quad \mbox{if } x\notin X_n.
\end{align*}
In particular, 
$$
u^-(x)\leq  u_n(x)\leq  u^+(x), \quad \forall\, x\in X,\; n\in \N,
$$
that is, for every fixed $x\in X$ the sequence $u_n(x)$ is uniformly bounded in $n$ with
\begin{equation}\label{eq:uniform_bound}
	|u_n(x)| \leq c_x\coloneqq \max\left\{|u^-(x)|, |u^+(x)|\right\}<\infty \quad \forall\, n\in \N.
\end{equation}

Therefore, by passing to a subsequence using a diagonal sequence argument, the limit functions
\begin{align*}
	&u(x)\coloneqq \lim_{n\to \infty} u_{n}(x) = \lim_{n\to \infty} \Psi v_{n}(x),\\ %\label{eq:limit3}\\
	& v \coloneqq \Phi u%\label{eq:limit4}
\end{align*}
exist and are well-defined on $X$.  By the same arguments in \eqref{eq:contractivity}, it follows that 
\begin{equation}\label{eq:contractivity2}
	\|u\| \leq \|g\|  \quad \mbox{and} \quad	u =\Psi v \in \ell^1(X,\mu).
\end{equation}
Moreover, from the previous \textbf{Step II-1} we know that $u^+,u^- \in \ell^1(X,\mu)$ and then from \eqref{eq:uniform_bound} it follows that $|u_n|$ is bounded above by an integrable function. By dominated convergence, we get \eqref{eq:l1convergence}, i.e., $\lim_{n\to \infty} \| u_n - u \| =0$.

\vspace{0.5cm}
\noindent\textbf{Step III} (When $G$ is infinite and connected, $\operatorname{id}+ \lambda \mathcal{L}_{|\Omega}$ maps bijectively onto $\ell^{1,\pm}(X,\mu)$)\textbf{:} We want to show that the function $u \in \ell^{1\pm}(X,\mu)$  that we constructed in \textbf{Step II-1} as limit of a sequence of finitely supported functions $\{u_n\}_n$  is a solution of \eqref{eq:step-I-1.0} which belongs to $\Omega$. In order to do so, it remains to show that: 
\begin{subequations}
\begin{equation}\label{eq:1}
	u\in \dom(\mathcal{L});
\end{equation}
\begin{equation}\label{eq:3}
(\operatorname{id} + \lambda\mathcal{L})u=g;
\end{equation}
\begin{equation}\label{eq:2}
	\lim_{n\to \infty}\|\Lmin u_n - \mathcal{L}u  \|=0. 
\end{equation}
\end{subequations} 

Let us now highlight that $v_{n}=\Phi u_n \in C_c(X) \subseteq \dom\left(\Delta\right)$ for every $n$, that is, $\Delta v_{n}$ is well-defined. Since, for every $x \in X$,
$$
\sum_{y\in X} \lim_{n\to \infty}w(x,y) v_{n}(y)= \lim_{n\to \infty}\sum_{y\in X}w(x,y) v_{n}(y)
$$
by \eqref{eq:monotonicity1} and monotone convergence, and recalling that every $G_{\textnormal{dir},n}$ is a Dirichlet subgraph of $G$ for every $n\in \N$, by Lemma \ref{lem:A1} we get
\begin{align}\label{eq:thm_existence_5}
	\nonumber \Psi v(x)&+\frac{\lambda}{\mu(x)}\left[ \deg(x)v(x) - \sum_{y\in X}w(x,y)v(y) \right]  \\\nonumber
	&= \lim_{n\to \infty}\left( \Psi v_{n}(x) + \frac{\lambda}{\mu(x)}\left[\deg(x)v_{n}(x) - \sum_{y\in X}w(x,y) v_{n}(y)  \right] \right) \\\nonumber
	&= \lim_{n\to \infty}  (\Psi + \lambda \Delta)v_{n}  (x)\\\nonumber
	&=  \lim_{n\to \infty} \left( \Psi + \lambda\Delta_{\textnormal{dir},n}  \right)\hat{v}_{n} (x)\\
	&= \lim_{n\to \infty}  g_n (x)= g(x).
\end{align}
Notice that along the way we used the fact that $\psi$ is continuous. % as $\psi$ is strictly monotone and surjective.  
Moreover, by Remark \ref{rem:1}, we observe that $v \in \dom\left( \Delta \right)$, that is, $\Phi u \in \dom\left( \Delta \right)$ and 
$$
(\Psi + \lambda\Delta)v(x) = g(x),
$$
namely, $v$ is a nonnegative solution of \eqref{eq:step-I-1} and thus $u$ is a nonnegative solution of \eqref{eq:3}.

 By the fact that $\lambda \Delta\Phi u = g - u$ and $g, u \in \ell^1(X,\mu)$, we obtain $\Delta\Phi u \in \ell^{1}(X,\mu)$. We can then conclude that $u \in \dom\left(\mathcal{L}\right)$, i.e., \eqref{eq:1}. Let us prove \eqref{eq:2}: By the fact that $\lambda\mathcal{L}u= g-u$
 we obtain
 \begin{align}\label{eq:thm_existence_Alb}
 	\|\Lmin u_n - \mathcal{L}u\| & \leq \frac{1}{\lambda} \left( \|(\boldsymbol{\mathfrak{i}}_{n,\infty}\operatorname{id}_n\boldsymbol{\pi}_n + \lambda\Lmin)u_n - g\| +  \| \boldsymbol{\mathfrak{i}}_{n,\infty}\operatorname{id}_n\boldsymbol{\pi}_n u_n - u \| \right) \nonumber \\
 	&= \frac{1}{\lambda} \left( \|(\boldsymbol{\mathfrak{i}}_{n,\infty}\operatorname{id}_n\boldsymbol{\pi}_n + \lambda\Lmin)u_n - g\| +  \|  u_n - u \| \right)
 \end{align}
and we conclude \eqref{eq:2} by using \eqref{eq:l1convergence} and \eqref{eq:l1convergence0}.

  In particular, we have shown that for every $\lambda >0$ and $g \in \ell^{1,+}(X,\mu)$ there exists a unique $u \in \Omega\cap \ell^{1,+}(X,\mu)$ such that $(\operatorname{id}+\lambda\mathcal{L})u=g$, and $\|u\|\leq \|g\|$.  If $g\in \ell^{1,-}(X,\mu)$, then the arguments of the proof are completely symmetric and the (nonpositive) solution $u$ can be built as monotone decreasing limit.

\vspace{0.5cm}
\noindent\textbf{Step IV} (When $G$ is infinite, connected and satisfies assumptions \ref{m-accretivity_A2}, \ref{m-accretivity_B2} or \ref{m-accretivity_C2}, then $\operatorname{id} +\lambda\mathcal{L}_{|\Omega}$ maps bijectively onto $\ell^{1}(X,\mu)$)\textbf{:} The statement follows immediately by the same arguments in \textbf{Step III} if we can show that $u\in \dom(\mathcal{L})$ and 
$$
(\operatorname{id}+\lambda\mathcal{L})u=g.
$$
We divide this step into three sub-steps. 

\vspace{0.2cm}
\noindent\textbf{Step IV-H1} ($G$ is locally finite)\textbf{:} Let us highlight that $\dom(\Delta)=C(X)$ because of the local finiteness of $G$ so that $v_{n}, v \in  \dom\left(\Delta\right)$, that is, $\Delta v_{n}$ and $\Delta v=\Delta\Phi u$ are well-defined. Furthermore, by the local finiteness of $G$, for every fixed $x$ there exists a finite number % $m_x$ 
of nodes $y\in X$ such that $w(x,y)\neq 0$. %Let us call these nodes $\{y_{k} \}_{k=1}^{m_x}$. 
By Lemma~\ref{lem:chain2}, we can assume moreover that the sequence $\{X_n\}_{n=1}^\infty$ in addition to \eqref{eq:property_sets1} and \eqref{eq:property_sets2} also satisfies
$$
\mathring{X}_{n}\subset \mathring{X}_{n+1}\quad \mbox{and}\quad\bigcup_{n=1}^\infty \mathring{X}_n =X.
$$
Let $N=N(x)$ be such that $x \in \mathring{X}_n$ for every $n \geq N$. %Then for all $n\geq N$, it holds that $\{y_{k} \}_{k=1}^{m_x}\subseteq X_{N}\subseteq X_n$ and
%\begin{align*}
%	\sum_{y \in X} w(x,y) |v_{n}(y)| 
%	&= \sum_{k=1}^{m_x} w(x, y_k)|v_{n}(y_k)|.
%\end{align*}
As a consequence, for every fixed $x \in X$, the series are in fact finite sums and, passing to the limit, we get
\begin{align*}
	\lim_{n\to \infty} \sum_{y \in X} w(x,y) |v_{n}(y)|& %= \lim_{n\to \infty} \sum_{k=1}^{m_x} w(x,y_k) |\hat{v}_{n}(y_k)|\\
	=\lim_{n\to \infty}\sum_{y \in X_N} w(x,y) |v_{n}(y)|\\
	&=\sum_{y \in X} \lim_{n\to \infty} w(x,y) |v_{n}(y)|.
\end{align*}
Therefore, by the above considerations, we have
\begin{align*}
	(\Psi +\lambda\Delta) v(x) & =\lim_{n\to \infty} \left( \Psi + \lambda\Delta  \right)v_{n} (x)\\
	&=  \lim_{n\to \infty} \left( \Psi + \lambda\Delta_{\textnormal{dir},n}    \right)\hat{v}_{n} (x)\\
	&= \lim_{n\to \infty}  g_n (x)= g(x)
\end{align*}
where the second equality follows from Lemma \ref{lem:A1} (and the fact that every $G_{\textnormal{dir},n}$ is a Dirichlet subgraph of $G$ for every $n\in \N$).  

Since $\lambda \Delta\Phi u = g - u$, we get $\Delta\Phi u \in \ell^{1}(X,\mu)$, i.e., $u\in \dom(\mathcal{L})$. By the same arguments as in \textbf{Step III}, see \eqref{eq:thm_existence_Alb},
we can check that $u \in \Omega$. By the accretivity of $\mathcal{L}_{|\Omega}$ we obtain the uniqueness of $u$. In particular, we have proven that for every $\lambda >0$ and $g \in \ell^{1}(X,\mu)$ there exists a unique $u \in \Omega$ such that $(\operatorname{id}+\lambda\mathcal{L})u=g$, and $\|u\|\leq \|g\|$.

\vspace{0.2cm}
%\noindent\textbf{Step III}($\mathcal{L}$ is accretive when $G$ is infinite): As in the previous \textbf{Step II}, the node set $X$ can be decomposed as a sequence $\{X_n\}_{n=0}^\infty$ of connected and finite subsets $X_n$ which satisfies \eqref{eq:property_sets1} and \eqref{eq:property_sets2}. Consider again the sequence $\{G_{\textnormal{dir},n}\}_{n\in \N_0}$ of Dirichlet subgraphs as in \eqref{eq:def_Dir_subgraph}. Define $
%\Lmin \colon \dom\left( \Lmin \right)\subseteq \ell^{1}\left(X,\mu\right) \to \ell^{1}\left(X,\mu\right)$
%such that
%\begin{align*}
%	\dom\left( \Lmin \right)\coloneqq C_c(X), \quad \Lmin u\coloneqq \boldsymbol{\mathfrak{i}}_{n,\infty}\Deltadirn\Phi\boldsymbol{\pi}_{n} u.
%\end{align*}
%We want to show that for every $g\in \ell^1(X,\mu)$ we can construct an $u\in \ell^1(X,\mu)$ as limit of a sequence $\{u_n\}_n \subseteq C_c(X)$ such that
%\begin{subequations}
%	\begin{equation}\label{eq:l1convergence}
%		\lim_{n\to \infty}\|u_n -u\|=0;
%	\end{equation}
%\begin{equation}\label{eq:l1convergence0}
%	\lim_{n\to \infty}\|\left(\boldsymbol{\mathfrak{i}}_{n,\infty}\operatorname{id}_n\boldsymbol{\pi}_{n} + \Lmin\right) u_n- g\|=0.
%\end{equation}
%\end{subequations}
%Then we can prove the accretivity of $\mathcal{L}$ by applying Lemma \ref{lem:accretivityL-infinite}.

\vspace{0.2cm}
\noindent\textbf{Step IV-H2} ($\inf_{x\in X}\mu(x)>0$)\textbf{:} Once we show that for every fixed $x \in X$
\begin{equation}\label{eq:thm_existence_4}
\sum_{y \in X} \lim_{n\to \infty} w(x,y) |v_{n}(y)|=\lim_{n\to \infty} \sum_{y \in X} w(x,y) |v_{n}(y)|<\infty,
\end{equation}
that is, $v \in \dom(\Delta)$ and $\lim_{n\to \infty} \Delta v_{n}(x) =  \Delta\left(\lim_{n\to \infty}v_{n}\right)(x)$ by dominated convergence, then we can conclude the proof as in the final part of \textbf{Step IV-H1}. Indeed, one of the main issues in the previous steps was to show that the solution $v$ of equation \eqref{eq:step-I-1} belongs to $\dom(\Delta)$ so that $\left(\Psi + \lambda\Delta\right)v$ is well-defined. If this is established, we can apply the same arguments as in \eqref{eq:thm_existence_5}. However, here  \eqref{eq:thm_existence_4} is immediate: By the uniformly lower boundedness of the measure $\mu$ it follows that $\ell^1(X,\mu)\subseteq \ell^\infty(X,\mu)$ and, since $\Psi v=u \in \ell^1(X,\mu)$, we get $v\in \ell^\infty(X,\mu)$ by the surjectivity of $\psi$. So, $v \in \dom(\Delta)$ and   \eqref{eq:thm_existence_4} follows.

\vspace{0.2cm}
\noindent\textbf{Step IV-H3} ($\sup_{x\in X}\frac{\sum_{y\in X}w(x,y)}{\mu(x)}\leq c <\infty$ and 
$\Phi(\ell^1) \subseteq \ell^1$)\textbf{:} The reasoning of the previous step applies here as well. By \eqref{eq:contractivity2} we have $u \in \ell^1(X,\mu)$ and from the hypothesis on $\Phi$ it follows that  $v=\Phi u \in \ell^1(X,\mu)$. Therefore,
$$
\sum_{y \in X} w(x,y)|v(y)|= \sum_{y \in X} \frac{w(x,y)}{\mu(y)}|v(y)|\mu(y)\leq \sum_{y \in X} \sup_{z}\frac{\sum_{x\in X}w(x,z)}{\mu(z)}|v(y)|\mu(y) \leq c\|v\| < \infty.
$$
Thus, $v \in \dom(\Delta)$ and \eqref{eq:thm_existence_4} holds.

\vspace{0.5cm}
\noindent\textbf{Step V} (Constructing a solution when $G$ is not connected)\textbf{:} Assume now that $G$ is not connected and write $X$ as a disjoint union of connected components, that is, $X=\bigsqcup_{k=1}^KY_k$ where $Y_k$ are connected components of $X$ and $K\in \N \cup \{\infty\}$. 

We first observe that, if $u \in \dom(\Delta)$ and $x \in Y_k$, then $\Delta u(x) = \Delta_{k} \boldsymbol{\pi}_k u(x)$ where $\Delta_{k}$ is the formal graph Laplacian associated to the canonical induced subgraph $G_{k}=(Y_k,w_{|Y_k\times Y_k},\kappa_{|Y_k}, \mu_{|Y_k})$ and $\boldsymbol{\pi}_{k}$ is the projection onto $C(Y_k)$. We then write $\mathcal{L}_k \colon \dom(\mathcal{L}_k)\subseteq \ell^1(Y_k,\mu_{|Y_k}) \to \ell^1(Y_k,\mu_{|Y_k})$ where
\begin{align*}
	&\dom(\mathcal{L}_k)\coloneqq \{v \in \ell^1(Y_k,\mu_{|Y_k}) \mid \Phi v \in \dom(\Delta_k),\; \Delta_k\Phi v\in \ell^1(Y_k,\mu_{|Y_k})\},\\
	&\mathcal{L}_k v\coloneqq \Delta_k\Phi v.
\end{align*}
Notice that if $u \in \dom(\mathcal{L})$, then $\boldsymbol{\pi}_{k}u \in \dom(\mathcal{L}_k)$ for every $k$ and $\mathcal{L}u(x)= \mathcal{L}_k\boldsymbol{\pi}_{k}u(x)$ for every $x \in Y_k$.

Now, for every $Y_k$, we fix an exhaustion $\{Y_{k,n}\}_n$ as in Lemma \ref{lem:chain1} and define the set $\Omega_{k}$ associated to the subgraph $G_{k}$ and $\{Y_{k,n}\}_n$ as in Definition \ref{def:Omega}. As we already know from \textbf{Step 0}, %by Lemma \ref{lem:L_Omega_accretive}, 
$\mathcal{L}_k$ is accretive on $\Omega_k$. 

We next define
\begin{equation*}
\Omega \coloneqq \{ u \in \dom(\mathcal{L}) \mid \exists\, \{u_k\}_k \mbox{ s.t. } u_k \in \Omega_k \mbox{ for } k=1,\ldots,K,\, \boldsymbol{\pi}_{k}u=u_k\}.
\end{equation*}
By Lemma \ref{lem:FC0}, $C_c(X)\subseteq \dom(\mathcal{L})$. Furthermore, by Lemma \ref{lem:FC}, for every $u \in C_c(X)$, $u_k\coloneqq \boldsymbol{\pi}_{k}u \in C_c(Y_k)\subseteq \Omega_k$. It follows that $C_c(X) \subseteq \Omega$ and, in particular,  $\overline{\Omega}=\overline{\dom(\mathcal{L})}=\ell^1(X,\mu)$. 
It is not difficult to show that $\mathcal{L}$ is accretive on $\Omega$, as each $\mathcal{L}_k$ is accretive on $\Omega_k$. Indeed, for every $u,v \in \Omega$
\begin{align*}
	\left\|(u - v) + \lambda \left(\mathcal{L}u - \mathcal{L}v\right) \right\|&=\sum_{k=1}^K \left\|(\boldsymbol{\pi}_{k}u - \boldsymbol{\pi}_{k}v) + \lambda \left(\mathcal{L}_k \boldsymbol{\pi}_{k}u - \mathcal{L} \boldsymbol{\pi}_{k}v\right) \right\|_{Y_k}\\
	&= \sum_{k=1}^K \left\|(u_k - v_k) + \lambda \left(\mathcal{L}_k u_k - \mathcal{L}_k v_k\right) \right\|_{Y_k}\\
	&\geq \sum_{k=1}^K\|u_k - v_k\|_{Y_k}= \|u - v\|.
\end{align*}
 
Finally, fix $g \in \ell^1(X,\mu)$ and $\lambda>0$. Clearly, if $g \in \ell^{1,\pm}(X,\mu)$, then $\boldsymbol{\pi}_{k}g\in \ell^{1,\pm}(Y_k,\mu_{|Y_k})$ for every $k$ and, if $G$ satisfies one of the assumptions \ref{m-accretivity_A2}, \ref{m-accretivity_B2} or \ref{m-accretivity_C2}, then $G_k$ satisfies the same property for every $k$. Therefore, for every $k$, let $u_k\in C(Y_k)$ be the unique function in $\Omega_k$ which solves
\begin{equation*}
	(\operatorname{id}_{k}+\lambda\Delta_{k}\Phi)u_k=\boldsymbol{\pi}_{k}g
\end{equation*}
as constructed in \textbf{Steps I} to \textbf{IV} above.  

Now, define
\begin{equation*}
	u(x) \coloneqq u_k(x) \mbox{ if } x \in Y_k.
\end{equation*}
The function $u$ has the following properties: 
\begin{enumerate}[i)]
	\item $\Phi u \in \dom(\Delta)$ since $\Phi u_k \in \dom(\Delta_{k})$ for every $k$;
	\item $u$ solves $(\operatorname{id}+\lambda\Delta\Phi)u=g$.
\end{enumerate}
 Moreover, by \eqref{eq:contractivity2}
\begin{equation*}
 \sum_{k=1}^K \|u_k\|_{Y_k}\leq \sum_{k=1}^K\|\boldsymbol{\pi}_{k}g\|_{Y_k}= \|g\|
\end{equation*}
and thus $\|u\|\leq \|g\|$. Therefore, $u\in \ell^1(X,\mu)$ and $\Delta\Phi u=g-u \in\ell^1(X,\mu)$, that is, $u \in \dom(\mathcal{L})$. In particular, $u \in \Omega$. 

This completes the proof
of Theorem~\ref{thm:main1}.
\qed
\vspace{0.2cm}

\begin{remark}
As the proof shows, the conclusion of \textbf{Step V} follows under the  weaker assumption that at least one of the conditions $g\geq 0$, $g\leq 0$, \ref{m-accretivity_A2}, \ref{m-accretivity_B2} or \ref{m-accretivity_C2} in the statement of Theorem \ref{thm:main1} holds in each connected component $Y_k$ of $X$, not necessarily the 
same condition for different $Y_k$.
\end{remark}

Using the constructions carried out in the proof of Theorem~\ref{thm:main1},
we can now prove the existence and uniqueness of solutions for the \ref{eq:C-D}.
\vspace{0.3cm}

\noindent\textbf{Proof of Theorem \ref{thm:main2}.}
From the definition of mild solutions, Definition \ref{def:weak_solution}, without loss of generality we can suppose that $T<\infty$. Then, since $f\in L^1_{\textnormal{loc}}\left([0,T]; \ell^1\left(X,\mu\right)\right)= L^1\left([0,T]; \ell^1\left(X,\mu\right)\right)$, there exists an $\epsilon$-discretization $\mathcal{D}_\epsilon$ of $([0,T];f)$ for every $\epsilon >0$. Let us observe that solving \eqref{implicit_Euler}, i.e.,
\begin{equation*}
	\frac{u_k - u_{k-1}}{\lambda_k} + \Delta\Phi u_k = f_k, \qquad
	\lambda_k\coloneqq t_k - t_{k-1} %\mbox{ and } k=1,\ldots,n,
\end{equation*}
for $k=1,\ldots,n$ means to solve at each step the equation
\begin{equation*}
	(\operatorname{id} + \lambda_k\Delta\Phi) u_k= u_{k-1} + \lambda_kf_k
\end{equation*}
in such a way that
$$
u_k \in \ell^1\left(X,\mu\right), \qquad  \Phi u_k\in\dom(\Delta), \qquad \Delta\Phi u_k\in \ell^1(X,\mu)
$$
where $\lambda_k>0$ and $f_k\in \ell^1\left(X,\mu\right)$. Therefore, given $u_0$ and $\{f_k\}_{k=1}^n$, the solution $\{u_k\}_{k=1}^n$ (if any) of \eqref{implicit_Euler} is computed recursively starting from
\begin{equation}\label{eq:step1}
	(\operatorname{id} + \lambda_1\Delta\Phi) u_1= u_{0} + \lambda_1f_1.
\end{equation}

If $u_0, f_1 \in \ell^1(X,\mu)$ are nonnegative (nonpositive), then $g\coloneqq u_0+\lambda_1f_1 \in \ell^{1,\pm}(X,\mu)$ and by Theorem \ref{thm:main1} there exists a unique nonnegative (nonpositive) solution $u_1\in \Omega$ of \eqref{eq:step1}. Iterating the procedure, each  $u_{k-1} + \lambda_kf_k \in \ell^{1,\pm}(X,\mu)$. Therefore, for every $\epsilon>0$ there exists an $\epsilon$-approximate solution $u_\epsilon$ of the \ref{eq:C-D} (see \eqref{epsilon_approximation}), such that $u_\epsilon(t)\geq 0$ and $u_\epsilon(t)\in \Omega$ for every $t\in(0,T]$. In Theorem \ref{thm:main1} we also proved that $\mathcal{L}_{|\Omega}$ is accretive and $\overline{\Omega}=\overline{\dom(\mathcal{L})}=\ell^1(X,\mu)$ by Lemma \ref{lem:FC}. 

Therefore, summarizing, we have that:
\begin{enumerate}[1)]
	\item By hypotheses \ref{item:nonnegativity/nonpositivity}, \ref{hp1},  and \ref{hp2} we have $u_0 \geq 0$, $u_0 \in \ell^1(X,\mu)=\overline{\dom\left(\mathcal{L}_{|\Omega}\right)}=\overline{\Omega}$,  and $f \in L^1\left([0,T]; \ell^1\left(X,\mu\right)\right)$, respectively;
	\item $\mathcal{L}_{|\Omega}$ is accretive;
	\item For every $u_0\geq 0$ and $f(t)\geq 0$, there exists an $\epsilon$-approximate solution $u_\epsilon$ such that $u_\epsilon(t)\geq 0$ and $u_\epsilon(t)\in \dom\left(\mathcal{L}_{|\Omega}\right)=\Omega$ for every $t\in (0,T]$.
\end{enumerate}
Then, by standard results (see \cite[Theorem 3.3]{benilan1988evolution}  or \cite[Theorem 4.1]{barbu2010nonlinear}), there exists a unique mild solution $u$ of the \ref{eq:C-D} which satisfies \eqref{uniform_limit}. Since the limit is uniform and $u_\epsilon(t)\geq 0$, then $u(t)\geq 0$ and $u(t) \in \ell^1(X,\mu)$ for every $t \in [0,T]$. The validity of \eqref{contraction_of_solutions} is again standard, see \cite[Theorem 4.1]{barbu2010nonlinear}. If $u_0 \leq 0$ and $f(t)\leq 0$, then we get the same results in a completely analogous way.

Under the extra hypothesis \ref{m-accretivity_A}, \ref{m-accretivity_B} or \ref{m-accretivity_C} in Theorem \ref{thm:main1} we have established the $m$-accretivity of $\mathcal{L}_{|\Omega}$ which implies the existence of $\epsilon$-approximate solutions for every $\epsilon$ as above. Therefore, under the hypotheses \ref{hp1}, \ref{hp2} and \ref{hpA}, there exists a unique mild solution $u$ of the \ref{eq:C-D} which satisfies \eqref{uniform_limit} and \eqref{contraction_of_solutions}, see \cite[Corollary 4.1]{barbu2010nonlinear}.
\qed
\vspace{0.2cm}

We now recall the following general result see, e.g., \cite[Proposition 3]{crandall1986nonlinear} or \cite[Theorem 1.6]{benilan1988evolution}.
\begin{proposition}\label{prop:continuity}
	Let $f \in L^1_{\textnormal{loc}}([0,T] \, ;\, \ell^1\left(X,\mu\right))$. Let $\dom(\mathcal{L})$ be closed and let $\mathcal{L}$ be continuous on $\dom(\mathcal{L})$. If $u$ is a mild solution on $(0,T)$, then $u$ is a strong solution and $u$ satisfies for every $0<t<T$
	\begin{equation*}
	u(t) = u(0) - \int_0^t \mathcal{L} u(s) ds +\int_0^t f(s)ds.
	\end{equation*}
	Moreover, if $f \in C([0,T] \, ;\, \ell^1\left(X,\mu\right))$, then $u$ is a classic solution.
\end{proposition}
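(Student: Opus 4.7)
The plan is to pass to the limit in the implicit Euler identity that defines the $\epsilon$-approximate solutions and use the closedness/continuity assumptions to identify the limit as a solution of an integral equation; the strong- and classic-solution regularity then drop out of the integral representation.

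First I would write down the telescoped form of the discrete scheme. If $u_\epsilon$ is an $\epsilon$-approximate solution subordinate to an $\epsilon$-discretization $\mathcal{D}_\epsilon=(\mathcal{T}_n,\boldsymbol{f}_n)$, then summing the implicit Euler relation $u_k-u_{k-1}+\lambda_k\mathcal{L}u_k=\lambda_k f_k$ gives, for every $t_m\in\mathcal{T}_n$,
\begin{equation*}
u_\epsilon(t_m)=u_0-\int_0^{t_m}\mathcal{L}u_\epsilon(s)\,ds+\int_0^{t_m}\widetilde{f}_\epsilon(s)\,ds,\qquad \widetilde{f}_\epsilon\coloneqq\sum_{k=1}^{n}f_k\mathds{1}_{(t_{k-1},t_k]}.
\end{equation*}
Since $u_\epsilon(t)\in\dom(\mathcal{L})$ for every $t$ in its domain and $u_\epsilon\to u$ uniformly on $[0,T]$ (Definition \ref{def:weak_solution}), the closedness of $\dom(\mathcal{L})$ immediately yields $u(t)\in\dom(\mathcal{L})$ for every $t\in[0,T]$, and the continuity of $\mathcal{L}$ on $\dom(\mathcal{L})$ yields $\mathcal{L}u_\epsilon(t)\to\mathcal{L}u(t)$ in $\ell^1(X,\mu)$ for every $t$. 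A uniform-in-$\epsilon$ bound on $\|\mathcal{L}u_\epsilon(t)\|$, which follows by combining the contractivity estimate of Theorem \ref{thm:main1} with the identity $\lambda_k\mathcal{L}u_k=u_{k-1}+\lambda_kf_k-u_k$ together with the Bochner-integrability of $f$, promotes this pointwise convergence to convergence in $L^1([0,T];\ell^1(X,\mu))$ via dominated convergence. On the forcing-term side, the defining property $\sum_k\int_{t_{k-1}}^{t_k}\|f(s)-f_k\|\,ds\le\epsilon$ of Definition \ref{def:epsilon-discretization} is exactly the statement that $\widetilde{f}_\epsilon\to f$ in $L^1([0,T];\ell^1(X,\mu))$.

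Passing to the limit $\epsilon\to 0$ in the displayed identity, and interpolating to non-partition times via the continuity of $u$, I obtain
\begin{equation*}
u(t)=u(0)-\int_0^{t}\mathcal{L}u(s)\,ds+\int_0^{t}f(s)\,ds\qquad\forall\,t\in[0,T].
\end{equation*}
Because $\mathcal{L}u$ and $f$ lie in $L^1_{\textnormal{loc}}([0,T];\ell^1(X,\mu))$, the right-hand side belongs to $W^{1,1}_{\textnormal{loc}}((0,T);\ell^1(X,\mu))$; differentiating a.e.\ yields $\partial_t u(t,x)+\Delta\Phi u(t,x)=f(t,x)$ for a.e.\ $t\in(0,T)$, and $u(0)=u_0$ by construction. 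Together with $u\in C([0,T];\ell^1(X,\mu))$ inherited from the uniform convergence, this verifies every bullet of Definition \ref{def:strong_solution}.

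For the final assertion, if additionally $f\in C([0,T];\ell^1(X,\mu))$ then $u\in C([0,T];\ell^1(X,\mu))$ together with the continuity of $\mathcal{L}$ on $\dom(\mathcal{L})$ gives that $s\mapsto\mathcal{L}u(s)$ is continuous on $[0,T]$. The fundamental theorem of calculus for Bochner integrals then shows that the right-hand side of the integral representation is of class $C^1$, whence $u\in C^1((0,T);\ell^1(X,\mu))$ and the equation holds pointwise, meeting Definition \ref{def:classical_solution}. The main obstacle I anticipate is technical rather than conceptual: obtaining a uniform-in-$\epsilon$ integrable majorant for $\|\mathcal{L}u_\epsilon(\cdot)\|$ so that the interchange of limit and Bochner integral is rigorously justified; the contractivity estimate of Theorem \ref{thm:main2}, applied step-by-step along the Euler scheme, should supply the required control.
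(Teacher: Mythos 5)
Note first that the paper does not prove Proposition \ref{prop:continuity} at all: it is quoted as a known result from the nonlinear semigroup literature (\cite[Proposition 3]{crandall1986nonlinear}, \cite[Theorem 1.6]{benilan1988evolution}). Your strategy --- telescope the implicit Euler relation into an integral identity for $u_\epsilon$, pass to the limit using closedness of $\dom(\mathcal{L})$ and continuity of $\mathcal{L}$, and then read off the strong/classic regularity from the resulting Volterra-type representation --- is exactly the standard proof of that cited result, and the skeleton is sound: the telescoped identity is correct, closedness does give $u(t)\in\dom(\mathcal{L})$, the $\epsilon$-discretization property does give $\widetilde f_\epsilon\to f$ in $L^1([0,T];\ell^1(X,\mu))$, and the final differentiation steps (a.e.\ for the strong case, $C^1$ via the fundamental theorem of calculus when $f$ and hence $\mathcal{L}u$ are continuous) are fine.

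The one step that does not work as written is your justification of the uniform-in-$\epsilon$ bound on $\|\mathcal{L}u_\epsilon(t)\|$. From $\lambda_k\mathcal{L}u_k=u_{k-1}-u_k+\lambda_k f_k$ you get $\|\mathcal{L}u_k\|\le\lambda_k^{-1}\|u_k-u_{k-1}\|+\|f_k\|$, and the contractivity estimate of Theorem \ref{thm:main1} only controls $\|u_k\|$, not the difference quotient $\lambda_k^{-1}\|u_k-u_{k-1}\|$, which a priori can blow up as the mesh is refined; uniform control of such quotients is precisely the hard Lipschitz-in-time estimate of Crandall--Liggett theory and normally requires $u_0\in\dom(\mathcal{L})$ plus bounded variation of $f$, none of which you may assume here. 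The fix is to use the hypotheses of the proposition instead: the trajectory $K=u([0,T])$ is compact in $\ell^1(X,\mu)$ and $\sup_t\|u_\epsilon(t)-u(t)\|\le\epsilon$, so continuity of $\mathcal{L}$ at each point of the closed set $\dom(\mathcal{L})\supseteq K$ gives, by a finite subcover of $K$ by balls on which $\mathcal{L}$ oscillates by at most $1$, a constant $M$ with $\|\mathcal{L}u_\epsilon(t)\|\le M$ for all $t$ and all small $\epsilon$ (equivalently, a subsequence argument shows $\mathcal{L}u_\epsilon\to\mathcal{L}u$ \emph{uniformly} on $[0,T]$). With that replacement the interchange of limit and Bochner integral is justified and the rest of your argument goes through.
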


We now provide a direct application of the proposition above to the graph setting.

\begin{corollary}\label{cor:application}
Let $G=(X,w,\kappa,\mu)$ be a graph. If
\begin{enumerate}[(i)]
\item\label{cor_item1}
$
\sup_{x\in X} \Deg(x) < \infty;
$
\item\label{cor_item2} $\Phi \colon  \ell^1(X,\mu) \to \ell^1(X,\mu)$ is continuous;
\end{enumerate}
then $\dom(\mathcal{L})=\ell^1(X,\mu)$ and $\mathcal{L}$ is continuous. In particular, the conclusions of Proposition \ref{prop:continuity} hold.
   \end{corollary}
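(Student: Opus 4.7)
The plan is to reduce the corollary to a direct verification of the two hypotheses of Proposition~\ref{prop:continuity}: namely that $\dom(\mathcal{L})$ is all of $\ell^1(X,\mu)$ (so in particular closed) and that $\mathcal{L}=\Delta\Phi$ is continuous. Since $\Phi$ is assumed to map $\ell^1(X,\mu)$ continuously to itself, the entire task reduces to showing that $\Delta$ restricts to a bounded linear endomorphism of $\ell^1(X,\mu)$ under the uniform weighted-degree bound $C\coloneqq \sup_x \Deg(x) <\infty$.

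First I would fix $u\in\ell^1(X,\mu)$ and set $v\coloneqq \Phi u\in\ell^1(X,\mu)$. To show $v\in\dom(\Delta)$, I would apply Tonelli to the nonnegative double series, using the symmetry $w(x,y)=w(y,x)$ and the bound $\sum_{x}w(x,y)\leq \deg(y)=\Deg(y)\mu(y)\leq C\mu(y)$:
\begin{equation*}
\sum_{x\in X}\sum_{y\in X} w(x,y)|v(y)| \;=\; \sum_{y\in X}|v(y)|\sum_{x\in X}w(x,y) \;\leq\; C\sum_{y\in X}|v(y)|\mu(y) \;=\; C\|v\|.
\end{equation*}
Since all summands are nonnegative and the double sum is finite, each inner sum $\sum_{y}w(x,y)|v(y)|$ is finite, i.e.\ $v\in\dom(\Delta)$ by \eqref{formal_laplacian1}.

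Next I would estimate $\|\Delta v\|$. Using the identity $\Delta v(x)=\Deg(x)v(x)-\mu(x)^{-1}\sum_y w(x,y)v(y)$ from \eqref{formal_laplacian2}, the triangle inequality, and the previous computation, I obtain
\begin{equation*}
\|\Delta v\| \;\leq\; \sum_{x\in X}\Deg(x)|v(x)|\mu(x) \;+\; \sum_{x\in X}\sum_{y\in X}w(x,y)|v(y)| \;\leq\; 2C\|v\|.
\end{equation*}
In particular $\Delta v\in\ell^1(X,\mu)$, so $v\in\dom(\Delta)$ with $\Delta v\in\ell^1(X,\mu)$, which proves $u\in\dom(\mathcal{L})$; hence $\dom(\mathcal{L})=\ell^1(X,\mu)$, which is trivially closed. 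The same inequality shows that $\Delta\colon\ell^1(X,\mu)\to\ell^1(X,\mu)$ is a bounded linear operator with operator norm at most $2C$, and therefore continuous. Composing with the continuous $\Phi$ gives continuity of $\mathcal{L}$. The conclusion then follows immediately from Proposition~\ref{prop:continuity}.

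There is no real obstacle here; the only subtle point is the Tonelli exchange, which hinges on using the symmetry of $w$ to trade the $x$-sum for a $y$-sum and then invoking the uniform bound on $\Deg$. Note that hypothesis~\ref{cor_item1} is strictly stronger than the first part of~\ref{m-accretivity_C2}, so $m$-accretivity is available throughout and the mild solutions produced by Theorem~\ref{thm:main2} are genuine inputs for Proposition~\ref{prop:continuity}.
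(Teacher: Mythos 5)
Your proof is correct and follows essentially the same route as the paper: verify that $\dom(\mathcal{L})=\ell^1(X,\mu)$ and that $\mathcal{L}=\Delta\Phi$ is continuous as a composition, then invoke Proposition~\ref{prop:continuity}. The only difference is that where the paper cites the known equivalence between \ref{cor_item1} and boundedness of $\Delta$ on $\ell^1(X,\mu)$ (from \cite{haeseler2012laplacians,keller2021graphs}), you prove it directly via the Tonelli/symmetry argument with the explicit bound $\|\Delta v\|\leq 2C\|v\|$, which is a perfectly sound self-contained substitute.
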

\begin{proof}
Let $u\in  \ell^1(X,\mu)$.  By \ref{cor_item2} we have that $\Phi u \in  \ell^1(X,\mu)$ and then by \ref{cor_item1}
\begin{equation*}
\sum_{y\in X} w(x,y) |\phi(u(y))| \leq c_1\sum_{\substack{y\in X}} |\phi(u(y))| \mu(y) <\infty
\end{equation*}
for some $c_1>0$, that is, $\Phi u \in \dom\left(\Delta\right)\cap \ell^1(X,\mu)$. Let us recall  from \cite[Theorem 9.2]{haeseler2012laplacians} or \cite[Theorem 2.15]{keller2021graphs} that the formal graph Laplacian $\Delta$ is bounded on $\ell^1(X,\mu)$ (indeed, on $\ell^p(X, \mu)$ for all $p\in [1,\infty]$) if and only if \ref{cor_item1} holds. Therefore,
$$
\|\Delta\Phi u\| \leq c_2 \|\Phi u\| < \infty
$$
for some $c_2>0$, namely, $\Delta\Phi u \in \ell^1(X,\mu)$ and $\dom(\mathcal{L})=\ell^1(X,\mu)$. Therefore, by \ref{cor_item2}, $\mathcal{L}$ is continuous as the composition of continuous operators is continuous.
\end{proof}
   
\begin{remark}
Observe that, if $G$ is finite, then both hypotheses \ref{cor_item1} and \ref{cor_item2} in Corollary~\ref{cor:application} are trivially satisfied and if $f$ is continuous, then the \ref{eq:C-D} always has a unique classic solution for any $\phi$. 

About hypothesis \ref{cor_item2}, if $G$ is not finite, one condition that ensures the continuity of $\Phi$ is 
if $\phi$ is Lipschitz continuous with uniform Lipschitz constant.
Another sufficient condition for the continuity of the operator $\Phi$ on $\ell^1(X,\mu)$ is that $\mu$ is bounded away from zero, i.e., assumption \ref{m-accretivity_B} and that $\phi$ is uniformly Lipschitz on every interval $[-R,R]$. This is for instance the case of the PME where $\phi(s)=s|s|^{m-1}$ with $m> 1$.

To prove these statements, recall first that we are assuming $\Phi (\ell^1(X,\mu))\subseteq \ell^1(X,\mu)$ and that if $\inf_{x\in X}\mu(x)\geq c>0$, then $\ell^1(X,\mu)\subseteq \ell^\infty(X,\mu)$ and $\|u\|_\infty \leq c^{-1}\|u\|_1$ for every $u \in  \ell^1(X,\mu)$. Therefore, if $\|u_n - u\|_1 \to 0$ then $|u_n-u|$ is uniformly bounded. In particular, there exists $R>0$ such that $u_n(x), u(x) \in [-R,R]$ for every $x\in X$ and for every $n\in \N$. Consequently, $|\phi(u_n(x)) - \phi(u(x))| \leq L_R|u_n(x) - u(x)|$ where $L_R$ is the Lipschitz constant of $\phi$ on $[-R,R]$ and then $\|\Phi u_n - \Phi u\|_1\to 0$.
   \end{remark}

\appendix
%%%%%%%%%%%%%%%%%%%%%%%%%%%%%%%%%%%%%
\section{Auxiliary results}\label{sec:auxiliary_results}
%%%%%%%%%%%%%%%%%%%%%%%%%%%%%%%%%%%%%%
In this appendix we collect several results which are used in various parts of the paper.
The first result concerns the relationship between the Dirichlet Laplacian and restrictions of the formal Laplacian. For related material, see \cite{keller2012dirichlet,keller2021graphs}.
\begin{lemma}\label{lem:A1}
Let $G=(X,w,k,\mu)$  be a graph, let $A$ be a  subset of $X$ such that $\mathbullet{\partial}A\neq \emptyset$ and let $G_{\textnormal{dir}} =(A, w_{|A\times A}, k_{\textnormal{dir}}, \mu_{|A})$ be the Dirichlet subgraph associated to $A$. Define
$$
\Delta_{|A} \colon \dom\left(\Delta_{|A} \right) \subseteq C(X) \to C(A)
$$
with
\begin{align*}
&\dom\left(\Delta_{|A} \right)\coloneqq \left\{ u \in C(X) \mid u\equiv 0 \mbox{ on } X\setminus A, \, \sum_{y \in A} w(x,y)\left|u(y)\right|< \infty \quad \forall x \in A \right\},\\
&\Delta_{|A}u(x)\coloneqq \Delta u(x) \quad \mbox{for every } x \in A.
\end{align*}
Then, we have the following commutative diagrams
$$
D_1\colon\begin{tikzcd}
 \dom\left(\Deltadir\right) \arrow[dr,"\Deltadir"'] \arrow[hookrightarrow,r,"\boldsymbol{\mathfrak{i}}"]   & \dom\left(\Delta_{|A}\right) \arrow[d,"\Delta_{|A}"] \\
& C(A)
\end{tikzcd}\qquad
D_2\colon\begin{tikzcd}
\dom\left(\Delta_{|A}\right)  \arrow[dr,"\Delta_{|A}"'] \arrow[r,mapsto,"\boldsymbol{\pi}"]   & \dom\left(\Deltadir\right)  \arrow[d,"\Deltadir"] \\
& C(A)
\end{tikzcd}
$$
where $\boldsymbol{\mathfrak{i}}$ and $\boldsymbol{\pi}$ are the canonical embedding and the canonical projection, respectively, as defined in \eqref{eq:embedding-projection}. In particular, we have $\Deltadir \equiv \Delta_{|A}\boldsymbol{\mathfrak{i}}$, $\Deltadir\boldsymbol{\pi} \equiv \Delta_{|A}$ and
\begin{align*}
& \Deltadir v(x) = \Delta\boldsymbol{\mathfrak{i}}v(x) \qquad &\forall v \in \dom(\Deltadir)\subseteq C(A), \; \forall x \in A,\\
&\Deltadir\boldsymbol{\pi}u(x)= \Delta u(x) \qquad & \forall  u \in \dom\left(\Delta_{|A}\right)\subseteq C(X),\; \forall x \in A.
\end{align*}
If every node in $\mathbullet{\partial}A$ is connected to a finite number of nodes in $A$, then
$$
\dom\left(\Delta_{|A}\right) =  \dom\left(\Delta\right) \cap \left\{u \in C(X) \mid u \equiv 0 \mbox{ on } X\setminus A \right\}
$$
 and $\Delta_{|A} u$ can be uniquely extended to $X$ for every
 $u \in \dom\left(\Delta_{|A}\right)$ in such a way that
 $$
 \Delta_{|A} u(x)=\Delta u(x) \quad \forall\, x \in X,
 $$
 that is, $\Delta_{|A}$ is the restriction of $\Delta$ to the set of functions which vanish on $X\setminus A$.
\end{lemma}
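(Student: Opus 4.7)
Both diagrams reduce to the pointwise identity $\Delta u(x) = \Delta_{\textnormal{dir}} u_{|A}(x)$ for $x \in A$ and $u \in C(X)$ vanishing on $X\setminus A$, so the natural approach is a direct computation from the definitions together with the identity $\kappa_{\textnormal{dir}} = \kappa_{|A} + b_{\textnormal{dir}}$. The central observation is that if $u \equiv 0$ on $X\setminus A$ and $x \in A$, then the splitting
$$
\sum_{y\in X} w(x,y)\bigl(u(x)-u(y)\bigr) = \sum_{y\in A} w(x,y)\bigl(u(x)-u(y)\bigr) + u(x)\sum_{y\notin A} w(x,y)
$$
produces an extra diagonal term $u(x)\,b_{\textnormal{dir}}(x)$, which combines with $\kappa(x) u(x)$ to yield precisely $\kappa_{\textnormal{dir}}(x) u(x)$.

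First I would verify that the arrows in $D_1$ and $D_2$ land in the advertised domains. For $D_1$, if $v \in \dom(\Delta_{\textnormal{dir}})$ then $\boldsymbol{\mathfrak{i}} v$ vanishes on $X\setminus A$ and $\sum_{y\in X} w(x,y)|\boldsymbol{\mathfrak{i}} v(y)| = \sum_{y\in A} w(x,y)|v(y)| < \infty$ for every $x \in A$, so $\boldsymbol{\mathfrak{i}} v \in \dom(\Delta_{|A})$. For $D_2$, the definition of $\dom(\Delta_{|A})$ directly yields $\boldsymbol{\pi} u \in \dom(\Delta_{\textnormal{dir}})$. Next, I would carry out the pointwise computation: for $x \in A$,
\begin{align*}
\Delta \boldsymbol{\mathfrak{i}} v(x) &= \frac{1}{\mu(x)}\sum_{y\in A} w(x,y)\bigl(v(x)-v(y)\bigr) + \frac{v(x)}{\mu(x)}\sum_{y\notin A} w(x,y) + \frac{\kappa(x)}{\mu(x)}v(x) \\
&= \frac{1}{\mu(x)}\sum_{y\in A} w(x,y)\bigl(v(x)-v(y)\bigr) + \frac{\kappa_{\textnormal{dir}}(x)}{\mu(x)}v(x) = \Delta_{\textnormal{dir}} v(x),
\end{align*}
which proves the identity in $D_1$. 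The identity in $D_2$ follows by the same calculation applied to $u$ in place of $\boldsymbol{\mathfrak{i}} v$, using $u \equiv 0$ on $X\setminus A$ to discard the terms $u(y)$ with $y \notin A$.

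For the final assertion, the nontrivial inclusion is $\dom(\Delta_{|A}) \subseteq \dom(\Delta)$, which demands $\sum_{y\in X} w(x,y)|u(y)| < \infty$ for \emph{every} $x \in X$, not only for $x \in A$. For $x \in X\setminus A$ with $x \notin \mathbullet{\partial}A$ the relevant sum is zero; for $x \in \mathbullet{\partial}A$ the extra hypothesis that $x$ has only finitely many neighbors in $A$ reduces $\sum_{y\in A} w(x,y)|u(y)|$ to a finite sum, hence it is finite. Since $u \equiv 0$ on $X\setminus A$, the formula $\Delta u(x) = -\mu(x)^{-1}\sum_{y\in A} w(x,y)u(y)$ for $x \in X\setminus A$ provides the unique extension of $\Delta_{|A} u$ that agrees with $\Delta u$ pointwise on $X$. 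I expect no substantive obstacle beyond careful bookkeeping of the various domains and confirming that absolute convergence of the sums involved legitimises the rearrangements made above.
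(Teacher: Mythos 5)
Your proposal is correct and follows essentially the same route as the paper's proof: the same splitting of $\sum_{y\in X}w(x,y)(u(x)-u(y))$ into the sum over $A$ plus the diagonal term $u(x)b_{\textnormal{dir}}(x)$ absorbed into $\kappa_{\textnormal{dir}}$, the same domain checks for both diagrams, and the same use of the finite-neighbor hypothesis at nodes of $\mathbullet{\partial}A$ to obtain $\dom(\Delta_{|A})\subseteq\dom(\Delta)$ and the unique extension formula on $X\setminus A$. No gaps.
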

\begin{proof}
Clearly, $\Delta_{|A}$ is well-defined and
$$
\dom\left(\Delta_{|A}\right) \supseteq  \dom\left(\Delta\right) \cap \left\{u \in C(X) \mid u \equiv 0 \mbox{ on } X\setminus A \right\}.
$$
If every node in $\mathbullet{\partial}A$ is connected to a finite number of nodes in $A$, then for every $u \in \dom\left(\Delta_{|A}\right)$ and $x \in X\setminus A$
$$
\sum_{y \in X}w(x,y)\left|u(y)\right|=\sum_{y \in \mathring{\partial}A}w(x,y)\left|u(y)\right|< \infty.
$$
Therefore,
$$
\dom\left(\Delta_{|A}\right) \subseteq  \dom\left(\Delta\right) \cap \left\{u \in C(X) \mid u \equiv 0 \mbox{ on } X\setminus A \right\}
$$
so that the two domains are equal as claimed.
Furthermore, for every $u \in \dom\left(\Delta_{|A}\right)$, we can uniquely extend $\Delta_{|A} u$ to $X\setminus A$ so as to satisfy $\Delta_{|A} u= \Delta u$ by defining
$$
\Delta_{|A} u(x) = -\frac{1}{\mu(x)}\sum_{y \in \mathring{\partial}A}w(x,y)u(y) \quad \mbox{for } x \in X\setminus A.
$$

Observe now that
$$
\dom\left(\Delta_{|A} \boldsymbol{\mathfrak{i}}\right) = \left\{ v \in C(A) \mid \boldsymbol{\mathfrak{i}} v \in\dom\left(\Delta_{|A}\right)  \right\} \subseteq C(A)
$$
and if $v \in C(A)$, then
\begin{align*}
\sum_{y \in X}w(x,y)\left|\boldsymbol{\mathfrak{i}}v(y)\right| &=  \sum_{y \in A}w(x,y)\left|v(y)\right| \quad \mbox{for every }x \in A.
\end{align*}
Therefore, it is immediate to check that $\dom\left(\Deltadir\right)= \dom\left(\Delta_{|A}  \boldsymbol{\mathfrak{i}}\right)$.

Finally, if $v \in \dom\left(\Deltadir\right)$, then for every $x \in A$ we have
\begin{align*}
\Deltadir v(x) &= \frac{1}{\mu_{|A}(x)}\sum_{y \in A} w_{|A\times A}(x,y)\left(v(x) - v(y)\right) + \frac{\kappa_{\textnormal{dir}}(x)}{\mu_{|A}(x)} v(x)\\
&= \frac{1}{\mu_{|A}(x)}\sum_{y \in A} w_{|A\times A}(x,y)\left(v(x) - v(y)\right) + \frac{\kappa_{|A}(x)+b_{\textnormal{dir}}(x)}{\mu_{|A}(x)} v(x)\\
&= \frac{1}{\mu(x)}\sum_{y \in A} w(x,y)\left(v(x) - v(y)\right) +  \frac{\sum_{y \in  \mathbullet{\partial} A}w(x,y)}{\mu(x)}v(x)+\frac{\kappa(x)}{\mu(x)} v(x)\\
&= \frac{1}{\mu(x)}\sum_{y \in X} w(x,y)\left(\boldsymbol{\mathfrak{i}} v(x) - \boldsymbol{\mathfrak{i}}v(y)\right) + \frac{\kappa(x)}{\mu(x)} \boldsymbol{\mathfrak{i}}v(x)\\
&= \Delta\boldsymbol{\mathfrak{i}}v(x).
\end{align*}
This concludes the proof of diagram $D_1$. The proof of diagram $D_2$ is basically the same following suitable modifications.
\end{proof}

The next theorem is a comparison principle for a nonlinear operator. This result generalizes \cite[Theorem 2, Section 23.1]{collatz1966functional} and \cite[Theorem 8 and Proposition 3.1]{keller2012dirichlet}, see also the proof of Theorem 1.3.1 in \cite{wojciechowski2008stochastic}. In particular, we relax the assumptions on the function $u$ by letting it not attain a minimum or maximum on $X$ if the graph $G$ does not have any infinite paths. We recall that for us a path is a walk without any repeated nodes.

\begin{thm}[Comparison principle]\label{thm:min_principle}
Let $G=\left(X, w, \kappa, \mu\right)$ be a connected graph. Let $\lambda>0$ and $v \in \dom\left(\Delta\right)$. Assuming that $\psi \colon \R \to \R$ is strictly monotone increasing and $\psi(0)\leq 0$ we consider three cases:

\emph{\textbf{Case 1)}} %$G$ has an infinite path and
There exists $x_0 \in X$ such that $u$ attains a minimum at $x_0$, i.e.,
$$v(x_{0})=\inf_{x\in X} \{v(x)\} > -\infty.$$

\emph{\textbf{Case 2)}} $G$ does not contain any infinite path.

\emph{\textbf{Case 3)}} $G$ has an infinite path and for every infinite path $\{x_{n}\}$ we have $\sum_{x_{n}} \mu(x_{n})=\infty$ and there exists $p>0$ such that $\sum_{x_{n}}|v(x_{n})|^p\mu(x_{n})<\infty$ .

 In all the three cases, if $\left( \Psi + \lambda\Delta\right)v \geq 0$, then $v \geq 0$.

 \smallskip

Assuming instead $\psi(0)\geq 0$ and substituting $v(x_0)=\sup_{x\in X} \{v(x)\} <\infty$ for $v(x_0)=\inf_{x\in X} \{v(x)\} > -\infty$ in Case 1, if $\left( \Psi + \lambda\Delta\right)v \leq 0$, then $v \leq 0$. Moreover, in any case, if $v(x)=0$ for some $x\in X$, then $v\equiv 0$.
\end{thm}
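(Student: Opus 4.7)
\medskip

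\noindent\textbf{Proof proposal.}
The strategy is a contradiction argument based on the elementary observation that, at any node $x_0$ at which $v$ attains a minimum (either locally, over its neighbors, or globally), one has $v(x_0)-v(y)\le 0$ for every $y\sim x_0$, hence
\[
\Delta v(x_0)=\frac{1}{\mu(x_0)}\sum_{y\in X}w(x_0,y)\bigl(v(x_0)-v(y)\bigr)+\frac{\kappa(x_0)}{\mu(x_0)}v(x_0)\le 0,
\]
the second summand being nonpositive whenever $v(x_0)<0$ because $\kappa\ge 0$. Combined with $\psi$ strictly increasing and $\psi(0)\le 0$, which yields $\Psi v(x_0)=\psi(v(x_0))<\psi(0)\le 0$, we would obtain $(\Psi+\lambda\Delta)v(x_0)<0$, contradicting the hypothesis $(\Psi+\lambda\Delta)v\ge 0$. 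So suppose for contradiction that there exists $x^*\in X$ with $v(x^*)<0$.

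\smallskip

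In Case~1, the global infimum is attained at some $x_0$, and if $v\not\ge 0$ then $v(x_0)\le v(x^*)<0$; the observation above applies verbatim at $x_0$ to produce a contradiction. In Cases~2 and~3 the plan is to mimic this by constructing greedily a descending path. Setting $x_1\coloneqq x^*$, given $x_1,\dots,x_k$ with $v(x_k)<0$, if there exists $y\sim x_k$ with $v(y)<v(x_k)$ we pick one such $y$ and set $x_{k+1}\coloneqq y$; otherwise the construction stops at $x_k$. Because the values $v(x_k)$ are strictly decreasing, the walk has no repeated nodes and is therefore a path, all of whose nodes satisfy $v(x_k)\le v(x^*)<0$. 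If the procedure terminates at some $x_n$, then $v(x_n)-v(y)\le 0$ for every $y\sim x_n$, and the minimum argument at $x_n$ gives the contradiction $(\Psi+\lambda\Delta)v(x_n)<0$.

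\smallskip

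The remaining possibility is that the path $\{x_k\}$ is infinite. In Case~2 this is ruled out by assumption, so we are done. In Case~3, since $v(x_k)\le v(x_1)<0$ for all $k$, we have $|v(x_k)|^p\ge |v(x_1)|^p>0$, and therefore
\[
\sum_{k}|v(x_k)|^p\mu(x_k)\ \ge\ |v(x_1)|^p\sum_{k}\mu(x_k)\ =\ +\infty,
\]
contradicting the summability assumption in Case~3. The symmetric statement (with $\psi(0)\ge 0$ and $(\Psi+\lambda\Delta)v\le 0$ implying $v\le 0$) is obtained by the same argument applied to supremum-seeking ascending paths.

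\smallskip

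For the final ``moreover'' claim, once $v\ge 0$ has been established, suppose $v(x_0)=0$ for some $x_0\in X$. Evaluating the hypothesis at $x_0$ yields
\[
0\ \le\ (\Psi+\lambda\Delta)v(x_0)\ =\ \psi(0)-\frac{\lambda}{\mu(x_0)}\sum_{y\in X}w(x_0,y)\,v(y),
\]
and both $\psi(0)\le 0$ and $\sum_{y}w(x_0,y)v(y)\ge 0$ force the equality $\sum_{y}w(x_0,y)v(y)=0$, which together with $w\ge 0$ and $v\ge 0$ forces $v(y)=0$ for every $y\sim x_0$. The zero set of $v$ is therefore closed under the neighborhood relation, and connectedness of $G$ propagates it to all of $X$. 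The main obstacle I anticipate is ensuring in Case~3 that the greedy construction produces a genuine path (no repetitions) and an actually usable lower bound $|v(x_k)|\ge |v(x_1)|$: both follow from strict monotonicity of the selection rule, which is why I require a strict inequality $v(y)<v(x_k)$ rather than just $\le$ when choosing the next node.
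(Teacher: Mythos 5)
Your proposal is correct and follows essentially the same route as the paper: assume $v(x^*)<0$, observe that a negative node which minimizes $v$ among its neighbors forces $(\Psi+\lambda\Delta)v<0$ there (using $\psi(v(x^*))<\psi(0)\le 0$ and $\kappa\ge 0$), hence build a strictly descending path and rule it out in each of the three cases exactly as the paper does, with the same treatment of the ``moreover'' claim via connectedness. The only cosmetic difference is that the paper derives the existence of a strictly smaller neighbor directly from adding the two inequalities, whereas you phrase it contrapositively as ``either descend or reach a local minimizer,'' which is the same step.
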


\begin{proof}
Let $v \in \dom\left(\Delta\right)$ be such that $\left( \Psi + \lambda\Delta\right)v \geq 0$ for $\psi(0)\leq 0$ strictly monotone increasing. If $v\geq0$, then there is nothing to prove.
Hence, we assume that there exists $x_{0}\in X$ such that $v(x_{0}) <0$. %, so that $c\coloneqq\inf_{x\in X}\{u(x)\}\in [-\infty,0)$.
We will show that this leads to a contradiction in all three cases.

Since $\psi(0)\leq0$ and $\psi$ is strictly monotone increasing,
\begin{equation}\label{eq:min_1}
	\psi(v(x_{0}))+ \lambda\frac{\kappa(x_{0})}{\mu(x_{0})}v(x_{0}) < 0.
\end{equation}
Furthermore, as $\left( \Psi + \lambda\Delta\right)v(x_0) \geq 0$,
\begin{equation}\label{eq:min_2}
	0\leq  \psi(v(x_{0}))+\frac{\lambda}{\mu(x_{0})}\sum_{y\in X}w(x_{0},y)\left(v(x_{0})-v(y)\right) + \lambda\frac{\kappa(x_{0})}{\mu(x_{0})}v(x_{0}).
\end{equation}
Combining the above inequalities \eqref{eq:min_1} and \eqref{eq:min_2}, we get
\begin{equation*}
	0<-\left[ \psi(v(x_{0})) +\lambda\frac{\kappa(x_{0})}{\mu(x_{0})}v(x_{0}) \right]\leq \frac{\lambda}{\mu(x_{0})}\sum_{y\in X}w(x_{0},y)\left(v(x_{0})-v(y)\right)
\end{equation*}
and because $w(\cdot,\cdot)\geq 0$ and $G$ is connected, there exists $y=x_{1}\sim x_{0}$ such that $v(x_{1})< v(x_{0})$. In particular, $v(x_1)<0$.

Hence, we see that every node where $v$ is negative is connected to a node where $v$ is strictly smaller. This is the basic observation that will be used in all three cases.

\vspace{0.5cm}
\noindent\textbf{Case 1)} From the discussion above, it is clear that $v$ cannot achieve a negative minimum.

\vspace{0.5cm}

\noindent\textbf{Case 2)} Iterating the procedure above, we find a sequence of distinct nodes $\{x_{k}\}_{k=0}^n$ such that $x_{0}\sim x_{1}\sim \cdots \sim x_{n}$ and
$$
v(x_{n})< v(x_{n-1}) < \ldots < v(x_{0}) <0.
$$
Since $G$ does not have any infinite path this sequence must end which leads to a contradiction.

\vspace{0.5cm}

\noindent\textbf{Case 3)} 
In this case, we can obtain an infinite sequence $\{v(x_{n})\}_n$ such that $\{x_{n}\}_n$  is an infinite path and
$$
\ldots < v(x_{n})< v(x_{n-1}) < \ldots < v(x_{0}) <0.
$$
It follows that $|v(x_{n})|>|v(x_{0})|>0$, for every $n$, and therefore $$\sum_{{n}}|v(x_{n})|^p\mu(x_{n})>|v(x_{0})|^p\sum_{n}\mu(x_{n})=\infty$$ for every $p>0$ which gives a contradiction.
\vspace{0.5cm}

Hence, we have established that $v \geq 0$ in all three cases. Now, if there exists $x_0 \in X$ such that $v(x_0)=0$, then
$$0\leq  \Psi v(x_{0}) + \lambda\Delta v(x_{0}) =-\frac{\lambda}{\mu(x_{0})}\sum_{y\in X}w(x_{0},y)v(y) \leq 0 $$
and thus $v(y)=0$ for all $y\sim x_0$.
Using induction and the assumption that $G$ is connected we get $v\equiv 0$.

The proof that $\left( \Psi + \lambda\Delta\right) v \leq 0$ implies $v \leq 0$ when $\psi(0)\geq 0$ is completely analogous.
\end{proof}

\begin{remark}
\label{remark:strongmin}
The above proof also shows that if $v$ satisfies $\left( \Psi + \lambda\Delta\right)v \geq 0$,  on any connected graph, $\psi(0)\leq 0$ and $\min_{x \in X} v(x)\leq 0$, then $v\equiv 0$.
\end{remark}

\begin{remark}
\label{cor:comparison}
A closer look at the proof shows that the conclusion of the theorem also holds for the operator
\[
\sigma \Psi+\lambda \Delta
\]
where $\sigma\in C(X)$ is positive.
\end{remark}

\begin{corollary}\label{cor:min}
With the hypotheses of Theorem \ref{thm:min_principle}, let $\psi(0)= 0$ for  $\psi \colon \R \to \R$ is strictly monotone increasing. 
Let $v_1, v_2$ be solutions of
$$
\left(\Psi + \lambda\Delta \right) v_k = g_k, \qquad \lambda>0 \mbox{ and } g_k \in C(X) \mbox{ for } k=1,2.
$$
If $g_1\geq g_2$, then $v_1 \geq v_2$.
\end{corollary}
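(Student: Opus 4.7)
The plan is to subtract the two equations and linearize the nonlinear part via a pointwise difference quotient, reducing the problem to the extended comparison principle recorded in Remark~\ref{cor:comparison}.

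First I would set $w \coloneqq v_1 - v_2$. Since $v_1, v_2 \in \dom(\Delta)$ and $\Delta$ is linear, $w \in \dom(\Delta)$. Subtracting the two identities $(\Psi + \lambda\Delta)v_k = g_k$ yields
\[
\bigl(\Psi v_1(x) - \Psi v_2(x)\bigr) + \lambda \Delta w(x) = g_1(x) - g_2(x) \geq 0 \qquad \forall\, x \in X.
\]

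Next I would introduce the pointwise slope
\[
\sigma(x) \coloneqq \begin{cases} \dfrac{\psi(v_1(x)) - \psi(v_2(x))}{v_1(x) - v_2(x)} & \text{if } v_1(x) \neq v_2(x),\\[4pt] 1 & \text{if } v_1(x) = v_2(x).\end{cases}
\]
Because $\psi$ is strictly monotone increasing, the first branch is strictly positive, so $\sigma(x) > 0$ for every $x \in X$. By construction $\Psi v_1(x) - \Psi v_2(x) = \sigma(x) w(x)$, so the inequality above rewrites as
\[
\sigma(x)\, w(x) + \lambda\, \Delta w(x) \geq 0, \qquad x \in X.
\]
This is precisely the operator $\sigma\tilde{\Psi} + \lambda \Delta$ acting on $w$, with $\tilde{\psi}(s) \coloneqq s$ (which is strictly monotone increasing and satisfies $\tilde{\psi}(0)=0$) and positive multiplier $\sigma \in C(X)$. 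Invoking Remark~\ref{cor:comparison} therefore gives $w \geq 0$, i.e., $v_1 \geq v_2$.

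The only point that requires a sanity check is that $w$ falls under one of the three regularity cases of Theorem~\ref{thm:min_principle}: Case~2 depends only on $G$ and is inherited automatically; Case~3 passes from $v_1, v_2$ to $w$ via $|w|^p \leq 2^{p-1}(|v_1|^p + |v_2|^p)$; and Case~1 is inherited whenever the corollary is applied in a context where $v_1, v_2$ are defined on a finite set (which is exactly the situation in which it is invoked in the proof of Theorem~\ref{thm:main1}). The step I expect to be the most delicate is this verification of the regularity hypothesis for the \emph{difference} $w$, but it is routine once one identifies the specific case in use.
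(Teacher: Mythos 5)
Your proof is correct and follows essentially the same route as the paper: subtract the two equations, factor the nonlinear difference through a strictly positive multiplier $\sigma\in C(X)$, and invoke Remark~\ref{cor:comparison}. The only (immaterial) difference is the normalization of $\sigma$ — the paper writes $\psi(v_1(x))-\psi(v_2(x))=\sigma(x)\,\psi\bigl(v_1(x)-v_2(x)\bigr)$ so as to reuse the original $\Psi$, whereas you divide by $v_1(x)-v_2(x)$ and apply the remark with $\tilde\psi=\operatorname{id}$; both multipliers are positive by strict monotonicity of $\psi$, and your explicit check that $w=v_1-v_2$ still satisfies the relevant case of Theorem~\ref{thm:min_principle} is a point the paper leaves implicit.
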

\begin{proof}
Notice that, since $\psi$ is strictly increasing and $\psi(0)=0$, there exists a positive function $\sigma\in C(X)$ such that
\[
\psi(v_1(x))-\psi(v_2(x))=\sigma(x) \psi(v_1(x)-v_2(x))
\]
for all $x \in X$.
Indeed, we can define
\[
\sigma(x)=
\begin{cases}
1&\text{if } v_1(x)=v_2(x)\\
\frac{\psi(v_1(x))-\psi(v_2(x))}{\psi(v_1(x)-v_2(x))} &\text{otherwise}.
\end{cases}
\]
It follows that $v_1-v_2$ satisfies
\[
\sigma\Psi(v_1-v_2) + \lambda \Delta(v_1-v_2)= g_1-g_2\geq 0 \,\,\text{ on } X
\]
and the conclusion follows from Remark~\ref{cor:comparison}.
\end{proof}

\begin{remark}
\label{rmk:strongmin}
As in Remark \ref{remark:strongmin}, the proof shows that if $v_k$ satisfy $\left(\Psi + \lambda\Delta \right) v_k = g_k$, $g_1\geq g_2$ and $\min_{x \in X}\{v_1(x)-v_2(x)\}\leq 0$, then $v_1 \equiv v_2$ and $g_1 \equiv g_2$ on $X$
\end{remark}

Finally, in the following two lemmas we discuss how to exhaust the graph via finite subgraphs
which are nested and such that each subgraph is connected to the next subgraph. We note that as
we do not assume local finiteness, we have to take a little bit of care in how we choose the exhaustion.
Although this should certainly be well-known, for the convenience of the reader
we include a short proof.

Let $d$ denote the combinatorial graph metric, that is, the least number of edges
in a path connecting two nodes. Fix a node $x_1 \in X$ and let $S_r=S_r(x_1)$
denote the sphere of radius $r = 0, 1, 2, \ldots$ about $x_0$, that is,
$$S_r =\{ x \in X \mid d(x,x_0)=r \}.$$
For $x \in S_r$, we call $y \in X$ a \emph{forward neighbor} of $x$ if $y \sim x$
and $y \in S_{r+1}$. We will denote the set of forward neighbors of $x$ via $N_+(x)$, i.e.,
$$N_+(x) = \{ y \sim x \mid d(y,x_0) =d(x,x_0)+1 \}.$$
We now use the set of forward neighbors to inductively create our exhaustion sequence.

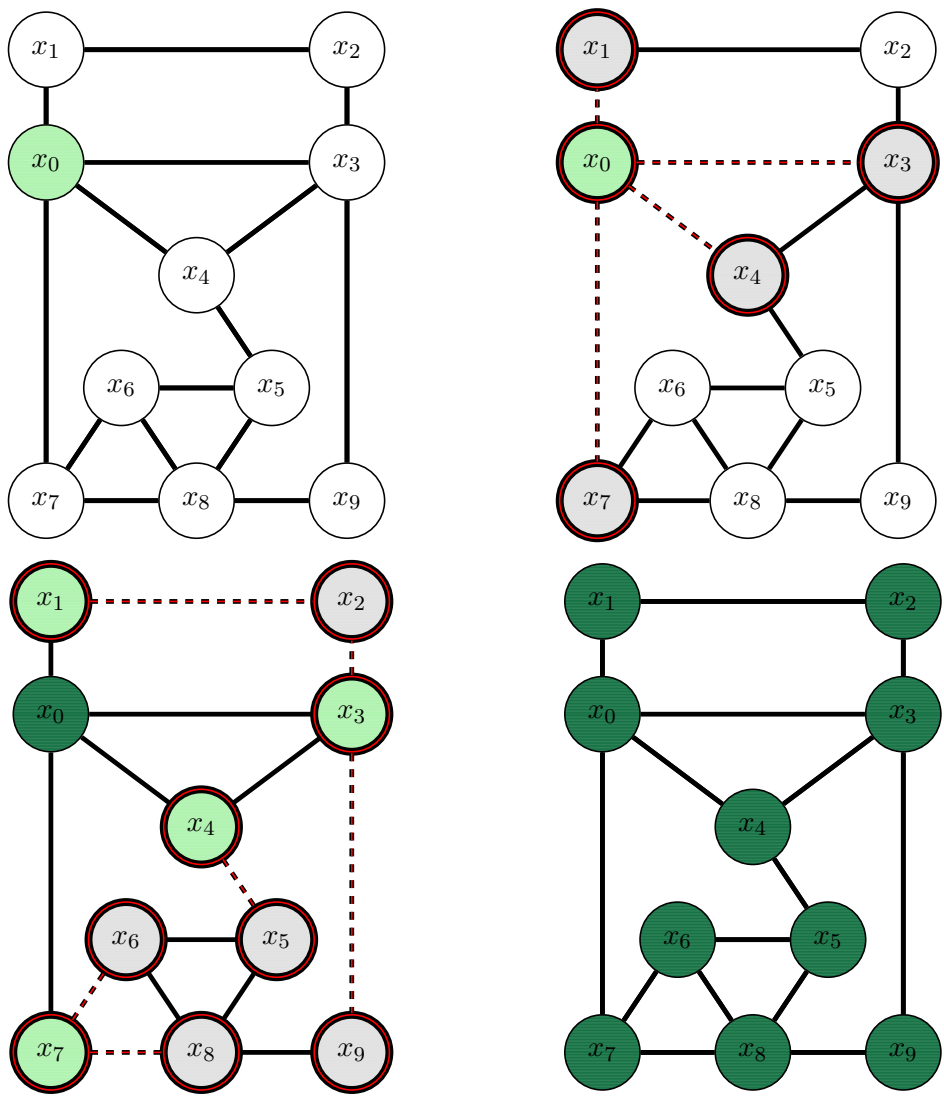
\begin{figure}[!bt]
	\centering
	\begin{minipage}{.45\textwidth}
		\begin {center}
		\begin {tikzpicture}[-latex ,auto ,node distance =1.5 cm and 1cm ,on grid ,
		semithick ,
		whitestyle/.style={circle,draw,fill=white,minimum size=1cm},
		blackstyle/.style ={ circle ,top color =black, bottom color = black,
			draw, white, minimum size =1cm},
		gray-orangestyle/.style ={ circle ,top color =white , bottom color = white ,
			draw, double=red,very thick, black, text=black, minimum size =1cm},
		green-redstyle/.style ={ circle ,top color = darkpastelgreen , bottom color =darkpastelgreen ,
			draw,double=red,very thick, black , text=black, minimum size=1cm},
		greenstyle/.style={circle ,top color =lightgreen!70, bottom color = lightgreen!70,
			draw,black , text=black ,minimum size=1cm},
		ghost/.style={circle,fill=white,minimum size=0.7cm}]
		%draw the nodes
		\node[whitestyle] (C){$x_8$};
		\node[whitestyle] (A) [above left=of C] {$x_6$};
		\node[whitestyle] (B) [above right =of C] {$x_5$};
		\node[whitestyle] (D) [below left=of A] {$x_7$};
		\node[whitestyle] (E) [below right=of B] {$x_9$};
		\node[whitestyle] (F) [above right=of A] {$x_{4}$};
		\node[ghost] (g1) [above =of D] {};
		\node[ghost] (g2) [above =of E] {};
		\node[ghost] (g3) [above =of g1] {};
		\node[ghost] (g4) [above =of g2] {};
		\node[whitestyle] (G) [above =of g4] {$x_3$};
		\node[greenstyle] (H) [above =of g3] {$x_0$};
		\node[whitestyle] (I) [above=of H] {$x_1$};
		\node[whitestyle] (L) [above=of G] {$x_2$};
		%draw the edges
		\path (C) edge [double=black,-] node[] {} (A);
		\path (A) edge [double=black,-] node[] {} (C);
		\path (A) edge [double=black,-] node[] {} (B);
		\path (B) edge [double=black,-] node[] {} (A);
		\path (C) edge [double=black,-] node[] {} (B);
		\path (B) edge [double=black,-] node[] {} (C);
		\path (B) edge [double=black,-] node[] {} (F);
		\path (F) edge [double=black,-] node[] {} (B);
		\path (F) edge [double=black,-] node[] {} (H);
		\path (H) edge [double=black,-] node[] {} (F);
		\path (H) edge [double=black,-] node[] {} (D);
		\path (D) edge [double=black,-] node[] {} (H);
		\path (D) edge [double=black,-] node[] {} (C);
		\path (C) edge [double=black,-] node[] {} (D);
		\path (D) edge [double=black,-] node[] {} (A);
		\path (A) edge [double=black,-] node[] {} (D);
		\path (C) edge [double=black,-] node[] {} (E);
		\path (E) edge [double=black,-] node[] {} (C);
		\path (E) edge[double=black,-] node[] {} (G);
		\path (G) edge[double=black,-] node[] {} (E);
		\path (G) edge[double=black,-] node[] {} (L);
		\path (L) edge[double=black,-] node[] {} (G);
		\path (I) edge[double=black,-] node[] {} (H);
		\path (H) edge[double=black,-] node[] {} (I);
		\path (I) edge[double=black,-] node[above] {} (L);
		\path (L) edge[double=black,-] node[above] {} (I);
		\path (F) edge[double=black,-] node[] {} (G);
		\path (G) edge[double=black,-] node[] {} (F);
		\draw[latex'-latex',double] (G) edge[double=black,-] node [left] {} (H);
	\end{tikzpicture}
\end{center}\end{minipage}
\begin{minipage}{.45\textwidth}
\centering
\begin {tikzpicture}[-latex ,auto ,node distance =1.5 cm and 1cm ,on grid ,
semithick ,
whitestyle/.style={circle,draw,fill=white,minimum size=1cm},
blackstyle/.style ={ circle ,top color =black, bottom color = black,
	draw, white, minimum size =1cm},
gray-orangestyle/.style ={ circle ,top color =lightgray!70 , bottom color = lightgray!70 ,
	draw, double=red,very thick, black, text=black, minimum size =1cm},
green-redstyle/.style ={ circle ,top color = lightgreen!70 , bottom color =lightgreen!70 ,
	draw,double=red,very thick, black , text=black, minimum size=1cm},
greenstyle/.style={circle ,top color =darkspringgreen , bottom color = darkspringgreen,
	draw,black , text=black ,minimum size=1cm},
ghost/.style={circle,fill=white,minimum size=0.7cm}]
%draw the nodes
\node[whitestyle] (C){$x_8$};
\node[whitestyle] (A) [above left=of C] {$x_6$};
\node[whitestyle] (B) [above right =of C] {$x_5$};
\node[gray-orangestyle] (D) [below left=of A] {$x_7$};
\node[whitestyle] (E) [below right=of B] {$x_9$};
\node[gray-orangestyle] (F) [above right=of A] {$x_{4}$};
\node[ghost] (g1) [above =of D] {};
\node[ghost] (g2) [above =of E] {};
\node[ghost] (g3) [above =of g1] {};
\node[ghost] (g4) [above =of g2] {};
\node[gray-orangestyle] (G) [above =of g4] {$x_3$};
\node[green-redstyle] (H) [above =of g3] {$x_0$};
\node[gray-orangestyle] (I) [above=of H] {$x_1$};
\node[whitestyle] (L) [above=of G] {$x_2$};
%draw the edges
\path (H) edge [double=red,  dashed,-]  (F);
\path (H) edge [double=red,  dashed,-]  (I);
\path (H) edge [double=red,  dashed,-]  (G);
\path (H) edge [double=red,  dashed,-]  (D);

\path (I) edge [double=black, -]  (L);
\path (G) edge [double=black, -]  (L);
\path (G) edge [double=black, -]  (E);
\path (G) edge [double=black, -]  (F);
\path (F) edge [double=black, -]  (B);
\path (D) edge [double=black, -]  (A);
\path (A) edge [double=black, -]  (C);
\path (D) edge [double=black, -]  (C);
\path (C) edge [double=black, -]  (E);
\path (C) edge [double=black, -]  (B);
\path (A) edge [double=black, -]  (B);
\end{tikzpicture}
\end{minipage}
\vspace{0.2cm}

\begin{minipage}{.45\textwidth}
\begin {center}
\begin {tikzpicture}[-latex ,auto ,node distance =1.5 cm and 1cm ,on grid ,
semithick ,
whitestyle/.style={circle,draw,fill=white,minimum size=1cm},
blackstyle/.style ={ circle ,top color =black, bottom color = black,
draw, white, minimum size =1cm},
gray-orangestyle/.style ={ circle ,top color =lightgray!70 , bottom color = lightgray!70 ,
draw, double=red,very thick, black, text=black, minimum size =1cm},
green-redstyle/.style ={ circle ,top color = lightgreen!70 , bottom color =lightgreen!70 ,
draw,double=red,very thick, black , text=black, minimum size=1cm},
greenstyle/.style={circle ,top color =darkspringgreen , bottom color = darkspringgreen,
draw,black , text=black ,minimum size=1cm},
ghost/.style={circle,fill=white,minimum size=0.7cm}]
%draw the nodes
\node[gray-orangestyle] (C){$x_8$};
\node[gray-orangestyle] (A) [above left=of C] {$x_6$};
\node[gray-orangestyle] (B) [above right =of C] {$x_5$};
\node[green-redstyle] (D) [below left=of A] {$x_7$};
\node[gray-orangestyle] (E) [below right=of B] {$x_9$};
\node[green-redstyle] (F) [above right=of A] {$x_{4}$};
\node[ghost] (g1) [above =of D] {};
\node[ghost] (g2) [above =of E] {};
\node[ghost] (g3) [above =of g1] {};
\node[ghost] (g4) [above =of g2] {};
\node[green-redstyle] (G) [above =of g4] {$x_3$};
\node[greenstyle] (H) [above =of g3] {$x_0$};
\node[green-redstyle] (I) [above=of H] {$x_1$};
\node[gray-orangestyle] (L) [above=of G] {$x_2$};
%draw the edges
\path (I) edge [double=red, dashed,-]  (L);
\path (G) edge [double=red, dashed,-]  (L);
\path (G) edge [double=red, dashed,-]  (E);
\path (F) edge [double=red, dashed,-]  (B);
\path (D) edge [double=red, dashed,-]  (A);
\path (D) edge [double=red, dashed,-]  (C);

\path (H) edge [double=black, -]  (F);
\path (H) edge [double=black, -]  (I);
\path (H) edge [double=black, -]  (G);
\path (H) edge [double=black, -]  (D);
\path (F) edge [double=black, -]  (G);
\path (A) edge [double=black, -]  (B);
\path (A) edge [double=black, -]  (C);
\path (C) edge [double=black, -]  (B);
\path (C) edge [double=black, -]  (E);
\end{tikzpicture}
\end{center}\end{minipage}
\begin{minipage}{.45\textwidth}
\centering
\begin {tikzpicture}[-latex ,auto ,node distance =1.5 cm and 1cm ,on grid ,
semithick ,
whitestyle/.style={circle,draw,fill=white!40,minimum size=1cm},
blackstyle/.style ={ circle ,top color =black, bottom color = black,
draw, white, minimum size =1cm},
gray-orangestyle/.style ={ circle ,top color =white , bottom color = white ,
draw, double=red,very thick, black, text=black, minimum size =1cm},
green-redstyle/.style ={ circle ,top color = darkspringgreen , bottom color =darkspringgreen ,
draw,double=red,very thick, black , text=black, minimum size=1cm},
greenstyle/.style={circle ,top color =darkspringgreen , bottom color = darkspringgreen,
draw,black , text=black ,minimum size=1cm},
ghost/.style={circle,fill=white,minimum size=0.7cm}]
%draw the nodes
\node[greenstyle] (C){$x_8$};
\node[greenstyle] (A) [above left=of C] {$x_6$};
\node[greenstyle] (B) [above right =of C] {$x_5$};
\node[greenstyle] (D) [below left=of A] {$x_7$};
\node[greenstyle] (E) [below right=of B] {$x_9$};
\node[greenstyle] (F) [above right=of A] {$x_{4}$};
\node[ghost] (g1) [above =of D] {};
\node[ghost] (g2) [above =of E] {};
\node[ghost] (g3) [above =of g1] {};
\node[ghost] (g4) [above =of g2] {};
\node[greenstyle] (G) [above =of g4] {$x_3$};
\node[greenstyle] (H) [above =of g3] {$x_0$};
\node[greenstyle] (I) [above=of H] {$x_1$};
\node[greenstyle] (L) [above=of G] {$x_2$};
%draw the edges
\path (C) edge [double=black,-] node[] {} (A);
\path (A) edge [double=black,-] node[] {} (C);
\path (A) edge [double=black,-] node[] {} (B);
\path (B) edge [double=black,-] node[] {} (A);
\path (C) edge [double=black,-] node[] {} (B);
\path (B) edge [double=black,-] node[] {} (C);
\path (B) edge [double=black,-] node[] {} (F);
\path (F) edge [double=black,-] node[] {} (B);
\path (F) edge [double=black,-] node[] {} (H);
\path (H) edge [double=black,-] node[] {} (F);
\path (H) edge [double=black,-] node[] {} (D);
\path (D) edge [double=black,-] node[] {} (H);
\path (D) edge [double=black,-] node[] {} (C);
\path (C) edge [double=black,-] node[] {} (D);
\path (D) edge [double=black,-] node[] {} (A);
\path (A) edge [double=black,-] node[] {} (D);
\path (C) edge [double=black,-] node[] {} (E);
\path (E) edge [double=black,-] node[] {} (C);
\path (E) edge[double=black,-] node[] {} (G);
\path (G) edge[double=black,-] node[] {} (E);
\path (G) edge[double=black,-] node[] {} (L);
\path (L) edge[double=black,-] node[] {} (G);
\path (I) edge[double=black,-] node[] {} (H);
\path (H) edge[double=black,-] node[] {} (I);
\path (I) edge[double=black,-] node[above] {} (L);
\path (L) edge[double=black,-] node[above] {} (I);
\path (F) edge[double=black,-] node[] {} (G);
\path (G) edge[double=black,-] node[] {} (F);
\draw[latex'-latex',double] (G) edge[double=black,-] node [left] {} (H);
\end{tikzpicture}
\end{minipage}
\caption{Exhaustion of a locally finite graph $G$ by a chain of Dirichlet subgraphs $\{G_{\textnormal{dir},n}\}_n$. The figure is read from left to right, from top to bottom. The chain $\{G_{\textnormal{dir},n}\}_n$ is built starting from an inner node $x_0$ by applying the recursive procedure described in the proof of Lemma \ref{lem:chain2}. At each step $n=1,2,3,\ldots$, the interior nodes in $\mathring{X}_n$ are green while the inner boundary nodes in $\mathring{\partial}X_n$ are light green. The nodes belonging to the exterior boundary $\mathbullet{\partial}X_n \subseteq X\setminus X_n$ are colored in light gray. We can visually see how every subgraph $G_{\textnormal{dir},n}$ is still ``chained'' to the supergraph $G$ by the Dirichlet killing term $\kappa_{\textnormal{dir},n}$ which is depicted by red rings and red dashed lines. In particular, each $G_{\textnormal{dir},n}$ is a Dirichlet subgraph of $G_{\textnormal{dir},n+1}$.}\label{fig:exhaustion}
\end{figure}

\begin{lemma}\label{lem:chain1}
	Let $G=(X,w,\kappa,\mu)$ be a connected and infinite graph.
	Then there exists a sequence of connected and finite induced subgraphs $G_n=(X_n, w_n, \kappa_n, \mu_n)$
	with $X_n\subset X_{n+1}$, $\bigcup_{n=1}^\infty X_n= X$ and
	$$
	\{ x \in X_n \mid x\sim y \mbox{ for some } y \in X_{n+1}\setminus X_n \}\neq \emptyset
	$$
	for all $n\in \mathbb{N}_0.$
\end{lemma}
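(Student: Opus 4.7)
The idea is to build the exhaustion $\{X_n\}$ one node (or one short walk) at a time, using an enumeration of $X$ as a target list and exploiting only the connectedness of $G$; no local finiteness is needed. First I enumerate $X=\{y_1,y_2,\ldots\}$, which is possible since $X$ is countable, and set $X_1\coloneqq\{y_1\}$. Suppose inductively that $X_n$ has been constructed as a finite subset of $X$ whose induced subgraph is connected and which contains $y_1,\ldots,y_n$. I then define $X_{n+1}$ in two cases.

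If $y_{n+1}\notin X_n$, the connectedness of $G$ furnishes a finite walk $y_{n+1}=z_0\sim z_1\sim\cdots\sim z_k$ with $z_k\in X_n$, and I set $X_{n+1}\coloneqq X_n\cup\{z_0,\ldots,z_k\}$. If instead $y_{n+1}\in X_n$, I still must enlarge $X_n$; here I use that since $G$ is connected and infinite while $X_n$ is finite, the exterior boundary $\mathbullet{\partial}X_n$ must be nonempty (any finite walk from a node of $X_n$ to a node of $X\setminus X_n$ has to cross it), so I pick any $z\in\mathbullet{\partial}X_n$ and let $X_{n+1}\coloneqq X_n\cup\{z\}$. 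In either case $X_{n+1}$ is finite and strictly contains $X_n$.

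\textbf{Verification.} By construction each $X_{n+1}$ is a finite connected subset of $X$: in the first case the added nodes form a walk from $y_{n+1}$ to $X_n$, in the second case $z$ is adjacent to some $x\in X_n$, so every new node remains walk-connected to $X_n$. Letting $j$ be the smallest index with $z_j\in X_n$ in the first case (respectively $x\in X_n$ with $x\sim z$ in the second case) exhibits a node of $X_n$ with a neighbor in $X_{n+1}\setminus X_n$, proving
\[
\{x\in X_n \mid x\sim y \text{ for some } y\in X_{n+1}\setminus X_n\}\neq\emptyset.
\]
Since $y_n\in X_n$ for every $n$, the enumeration gives $\bigcup_{n\geq 1}X_n=X$. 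Taking $G_n$ to be any induced subgraph on $X_n$ (for instance the canonical one with $\kappa_n\coloneqq\kappa_{|X_n}$) completes the construction.

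\textbf{Main obstacle.} The naive choice $X_n=B_n(x_0)$, the combinatorial ball of radius $n$ about a fixed node, is tempting but collapses in the absence of local finiteness because such a ball can already be infinite. The only real subtlety is therefore to handle the case $y_{n+1}\in X_n$ (where the target node does not by itself force an enlargement) while preserving strict growth and connectedness; this is resolved by observing that $\mathbullet{\partial}X_n\neq\emptyset$ whenever $X_n$ is finite in an infinite connected graph, and by always adding a \emph{finite} walk rather than a metric neighbourhood.
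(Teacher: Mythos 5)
Your proof is correct, but it takes a genuinely different route from the paper's. The paper fixes a root $x_0$, works with the combinatorial spheres $S_r(x_0)$ and the sets of \emph{forward neighbors} $N_+(x)$, and grows the exhaustion by adding, at each step and for each node already present, the first forward neighbor (in a fixed enumeration of each $N_+(x)$) not yet included; covering $X$ then requires an argument that every node is eventually swept up along a chain of forward neighbors. You instead enumerate $X=\{y_1,y_2,\dots\}$ and drive the construction by the target list: when $y_{n+1}$ is missing you adjoin a finite connecting walk, and when it is already present you adjoin a single node of $\mathbullet{\partial}X_n$ (nonempty because $X_n$ is finite while $G$ is infinite and connected) to keep the growth strict and the boundary condition satisfied. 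Both constructions correctly avoid any appeal to local finiteness, and you rightly flag that balls $B_n(x_0)$ would fail here since they can be infinite -- this is precisely why the paper reserves the ball exhaustion for the locally finite case in the subsequent lemma. Your version makes $\bigcup_n X_n = X$ immediate (since $y_n\in X_n$ by construction) at the cost of a case split; the paper's version is slightly more uniform and sets up the sphere/forward-neighbor language reused later, at the cost of a short extra argument for the covering property. All the verification steps you give (connectedness via the walk or via $z\sim x\in X_n$, strict inclusion since $z_0=y_{n+1}\notin X_n$ in the first case, and the witness $z_j\in X_n$ with $z_{j-1}\in X_{n+1}\setminus X_n$ for the boundary condition) are sound.
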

\begin{proof}We arrange the forward neighbors of each node in a sequence.
	Let $X_1 = \{x_0\}$. For $X_2$ choose the first forward neighbor of $x_0$ and add it to $X_0$,
	that is, $X_2=\{x_0, x_1 \}$ where $N_+(x_0)=\{x_1,x_2, \dots\}$. We note that $N_+(x_0) \neq \emptyset$
	as the graph is infinite and connected.
 
	Now, proceed inductively as follows: Given $X_n$
	let $X_{n+1}$ consist of $X_n$ and, for every node  $x $ in $X_n$ we add to $X_n$ the first forward neighbor in $N_+(x)$ which is not included in $X_n$ to get $X_{n+1}$. Let $G_n$ denote the induced subgraph.
	
	As we only add at most a single forward neighbor for each node at every step, it follows that each $X_n$ is finite
	with $|X_n| \leq 2^{n}$. It is clear by construction that $G_n$ is connected.
	Furthermore, as the graph is infinite and connected, it follows that
	$\{ x \in X_n \mid x\sim y \mbox{ for some } y \in X_{n+1}\setminus X_n \}\neq \emptyset$
	for each $n \in \mathbb{N}$. Finally, to show that the union of the $X_n$ is the entire node set, let
	$x \in X$. Then, $x \in S_r$ for some $r$ which means that there exists a sequence
	$\{y_k\}_{k=0}^r$ with $y_0=x_0$, $y_r=x$ and $y_k \in S_k$ such that $y_{k+1} \in N_{+}(y_k)$.
	As each node $y_k$ will then be included in some set of the exhaustion $X_n$, it follows that
	$x \in \bigcup_{n=1}^\infty X_n$. This completes the proof.
\end{proof}

In the locally finite case, the above can be simplified by just using balls for our exhaustion sets. See Figure \ref{fig:exhaustion} for a visual representation. In this case, it is also possible to exhaust in such a way that we have an inclusion between the interiors of the exhaustion sets.

\begin{lemma}\label{lem:chain2}
Let $G=(X,w,\kappa,\mu)$ be connected, infinite and locally finite. Then there exists a sequence of connected and finite induced subgraphs $G_n=(X_n, w_n, \kappa_n, \mu_n)$
with $\mathring{X}_n\subset \mathring{X}_{n+1}$, $X_n\subset X_{n+1}$,  $\bigcup_{n=1}^\infty \mathring{X}_n= X$ and
$$
\{ x \in X_n \mid x\sim y \mbox{ for some } y \in X_{n+1}\setminus X_n \}\neq \emptyset
$$
for all $n\in \mathbb{N}$.
\end{lemma}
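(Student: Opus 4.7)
The natural candidate for the exhaustion in the locally finite case is the sequence of closed metric balls. Fix any node $x_0 \in X$, let $d$ denote the combinatorial graph distance, and set $X_n \coloneqq B_n(x_0) = \{x \in X \mid d(x,x_0) \le n\}$, with $G_n$ the induced subgraph on $X_n$. This construction is much simpler than the one in Lemma \ref{lem:chain1} and works precisely because local finiteness is available now.

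The plan is first to verify the elementary properties. By induction on $n$, using $B_{n+1}(x_0) = B_n(x_0) \cup \bigcup_{x \in B_n(x_0)} N(x)$ and the fact that each neighbourhood $N(x)$ is finite, each $X_n$ is finite. Each $X_n$ is connected because any shortest path from $x \in X_n$ to $x_0$ has all its nodes at distance $\le n$ from $x_0$, hence lies in $X_n$. Since $G$ is connected, $\bigcup_n X_n = X$, and since $G$ is infinite and locally finite, each sphere $S_n(x_0) = X_n \setminus X_{n-1}$ is non-empty (if some $S_n$ were empty, then by connectedness $X = X_{n-1}$, contradicting $|X| = \infty$), giving $X_n \subsetneq X_{n+1}$ strictly. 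The last condition of the lemma is then immediate: any $z \in S_{n+1}(x_0)$ has, by definition of the distance, some neighbour $y \in S_n(x_0) \subseteq X_n$, so $\{x \in X_n \mid x \sim y \text{ for some } y \in X_{n+1}\setminus X_n\} \neq \emptyset$.

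The one step that needs a small argument is the strict chain of interiors. I would first observe that $B_{n-1}(x_0) \subseteq \mathring{X}_n$: if $d(x,x_0) \le n-1$, then every neighbour $y$ of $x$ satisfies $d(y,x_0) \le n$, so $y \in X_n$ and thus $x \in \mathring{X}_n$. This immediately yields $\bigcup_n \mathring{X}_n \supseteq \bigcup_n X_{n-1} = X$. For the strict inclusion $\mathring{X}_n \subsetneq \mathring{X}_{n+1}$, take the node $y \in S_n(x_0)$ constructed above, which has a neighbour $z \in S_{n+1}(x_0)$. Then $y \in X_n \subseteq \mathring{X}_{n+1}$ since all neighbours of $y$ have distance at most $n+1$ from $x_0$, but $y \notin \mathring{X}_n$ because $z \in X \setminus X_n$ is a neighbour of $y$. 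Hence $y \in \mathring{X}_{n+1} \setminus \mathring{X}_n$, completing the proof.

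The main obstacle, which is rather mild, is exactly this strict inclusion of interiors; it rests on two purely combinatorial facts, namely that every node at distance $n+1$ from $x_0$ possesses a neighbour at distance $n$ (a feature of the graph metric), and that infinitude and connectedness force all spheres to be non-empty. Local finiteness enters only to guarantee that the $X_n$ are themselves finite.
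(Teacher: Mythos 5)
Your proof is correct and follows essentially the same route as the paper: both exhaust the graph by combinatorial balls $B_n(x_0)$ around a fixed node (the paper indexes them as $X_{n+1}=B_n(x_0)$, a shift that changes nothing). Your write-up is somewhat more detailed than the paper's, in particular in spelling out why $X_n\subseteq\mathring{X}_{n+1}$ and why the inclusion of interiors is strict, but no new idea is involved.
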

\begin{proof}
We modify the construction of the previous lemma: We take $X_1=\{x_0\}$ and, having constructed $X_n$, we  add to it all forward neighbors of nodes in $X_n$ to get $X_{n+1}$. Thus, $X_{n+1}=B_n(x_0)=\{x \mid d(x,x_0)\leq n\}.$  Since the graph is locally finite and connected it is clear that $\cup_n X_n=X$ and, since $G$ is infinite, for every $n$ at least one node in $X_n$ has a forward neighbor so that
$$
\{ x \in X_n \mid x\sim y \mbox{ for some } y \in X_{n+1}\setminus X_n \}\neq \emptyset.
$$
Finally, since a node $x_n$ in $X_n$ has no forward neighbors if and only if it belong to $\mathring{X}_n$ is follows that $\mathring{X}_n\subset \mathring{X}_{n+1}$.
\end{proof}

\section{Accretivity}\label{sec:appendix2}
In this appendix we prove that there exists a dense subset $\Omega$ of $\dom(\mathcal{L})$ where $\mathcal{L}$ is accretive. This subset is of particular importance because every solution that is constructed while carrying out the proof of Theorem \ref{thm:main1} belongs to $\Omega$. 

From now on, if $G$ is infinite, then we fix an exhaustion $\{X_n\}_{n=1}^\infty$ of $X$, i.e., a sequence of subsets $X_n$ of $X$ such that $X_n \subseteq X_{n+1}$
and $X = \cup_{n=1}^\infty X_n$, where we additionally assume that each $X_n$ is finite.  We denote by $\boldsymbol{\mathfrak{i}}_{n,\infty}$ the canonical embedding and by $\boldsymbol{\pi}_n$  the canonical projection for each $X_n$:
\begin{align*}
&\boldsymbol{\mathfrak{i}}_{n,\infty} \colon C(X_n)\to C(X) \quad &\boldsymbol{\mathfrak{i}}_{n,\infty}u(x) \coloneqq \begin{cases}
u(x) & \mbox{if } x \in X_n,\\
0   & \mbox{if }  x \in X\setminus X_n;
\end{cases}\\
&\boldsymbol{\pi}_n \colon C(X) \to C(X_n) \quad &\boldsymbol{\pi}_nu(x) \coloneqq u(x) \mbox{ for every } x \in X_n.
\end{align*}
We remark that, for the purpose of the results collected here, the exhaustion $\{X_n\}_{n=1}^\infty$ is not required to satisfy any additional properties other than that each $X_n$ is finite.

We recall that on a graph $G=(X,w,\kappa,\mu)$ the operator $\mathcal{L} \colon \dom\left( \mathcal{L} \right)\subseteq \ell^{1}\left(X,\mu\right) \to \ell^{1}\left(X,\mu\right)$ is given by
\begin{align*}
		&\dom\left( \mathcal{L} \right)\coloneqq\left\{ u \in  \ell^{1}\left(X,\mu\right) \mid  \Phi u\in \dom\left(\Delta\right), \Delta\Phi u \in  \ell^{1}\left(X,\mu\right)  \right\}\\
		&\mathcal{L}u\coloneqq \Delta\Phi u.
\end{align*}
%\begin{subequations}
%\begin{equation}
%	\dom\left( \mathcal{L} \right)\coloneqq\left\{ u \in  \ell^{1}\left(X,\mu\right) \mid  \Phi u\in \dom\left(\Delta\right), \Delta\Phi u \in  \ell^{1}\left(X,\mu\right)  \right\}\label{L1}
%\end{equation}
%\begin{equation}
%	\mathcal{L}u\coloneqq \Delta\Phi u.\label{L2}
%\end{equation}
%\end{subequations}
For a subset $\Omega \subseteq \dom\left( \mathcal{L} \right)$, we write $\mathcal{L}_{|\Omega}$
for the restriction of $\mathcal{L}$ to $\Omega$.

We first introduce a sequence of operators $\Lmin$ whose purpose is to `nicely approximate' the operator $\mathcal{L}$.%and which also mimic the procedure used to construct the solutions in Theorem~\ref{thm:main1}. 
\begin{definition}[The operators $\mathcal{L}_n$]\label{def:Lmin}
	Let $G=(X,w,\kappa,\mu)$ be a graph. We define 
	$$
	\Lmin \colon \dom\left( \Lmin \right)\subseteq \ell^{1}\left(X,\mu\right) \to \ell^{1}\left(X,\mu\right)
	$$
	by
	\begin{align*}
		\dom\left( \Lmin \right)\coloneqq C_c(X), \quad \Lmin u\coloneqq \boldsymbol{\mathfrak{i}}_{n,\infty}\Deltadirn\Phi\boldsymbol{\pi}_{n} u
	\end{align*}
	where $\Deltadirn$ is the graph Laplacian associated to the Dirichlet subgraph $G_{\textnormal{dir},n} \subseteq G$ on the node set $X_n$.
\end{definition}

We are going to prove that $\Lmin$ is accretive for every $n$. This result will be a consequence of the next proposition
for finite graphs.
\begin{proposition}\label{prop:nonnegativity}
	Let $G=(X,w,\kappa,\mu)$ be a  finite graph. Then,
	\begin{equation*}
		\sum_{\substack{x\in X\colon\\u(x)\neq v(x)}} \left(\Delta\Phi u(x)-\Delta \Phi v(x)\right)\operatorname{sgn}(u(x)-v(x))\mu(x) \geq 0 \quad \forall\; u, v \in C(X).
	\end{equation*}
\end{proposition}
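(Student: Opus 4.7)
The plan is to rewrite the sum as one over all of $X$, exploit the symmetry of the edge-weight function $w$, and finally appeal to the monotonicity of $\phi$ to conclude that each remaining term is nonnegative.

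First, I would observe that since $\phi$ is strictly monotone increasing, we have $\phi(u(x)) = \phi(v(x))$ if and only if $u(x) = v(x)$, and moreover $\operatorname{sgn}(\phi(u(x)) - \phi(v(x))) = \operatorname{sgn}(u(x) - v(x))$. Consequently, setting $a(x) \coloneqq \Phi u(x) - \Phi v(x)$ and $\sigma(x) \coloneqq \operatorname{sgn}(u(x) - v(x))$, the summand vanishes on the set $\{u = v\}$, so the sum in the statement equals $\sum_{x \in X} (\Delta\Phi u(x) - \Delta\Phi v(x))\sigma(x)\mu(x)$, which is a finite sum by the finiteness of $G$.

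Next, I would plug in the definition of $\Delta$ to split this into an edge-weight piece
\[
S_w \coloneqq \sum_{x \in X} \sigma(x)\sum_{y \in X} w(x,y)(a(x) - a(y))
\]
and a killing-term piece $S_\kappa \coloneqq \sum_{x \in X} \kappa(x)\sigma(x)a(x)$. The piece $S_\kappa$ is immediately nonnegative because $\sigma(x)a(x) = |a(x)| \geq 0$ by the observation on signs above, and $\kappa \geq 0$.

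The main content is handling $S_w$. Using the symmetry $w(x,y) = w(y,x)$, a swap of summation indices gives $S_w = -\sum_{x,y} w(x,y)\sigma(y)(a(x) - a(y))$, and averaging the two expressions yields the symmetrized form
\[
S_w = \frac{1}{2}\sum_{x,y \in X} w(x,y)(\sigma(x) - \sigma(y))(a(x) - a(y)).
\]
Here the main step — really a short case analysis — is to verify that $(\sigma(x) - \sigma(y))(a(x) - a(y)) \geq 0$ for every pair $x,y$. This follows from monotonicity of $\phi$: if $\sigma(x) > \sigma(y)$, then in each of the cases $(\sigma(x),\sigma(y)) \in \{(1,0),(1,-1),(0,-1)\}$ one checks directly that $a(x) > a(y)$ (for instance, if $\sigma(x) = 1$ and $\sigma(y) = -1$, then $u(x) > v(x)$ gives $a(x) > 0$ while $u(y) < v(y)$ gives $a(y) < 0$), and symmetrically for $\sigma(x) < \sigma(y)$; in the remaining case $\sigma(x) = \sigma(y)$ the product is zero. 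Since $w \geq 0$, this shows $S_w \geq 0$, and combining with $S_\kappa \geq 0$ concludes the proof. The only mild subtlety is the sign case analysis, which is where the strict monotonicity of $\phi$ is genuinely needed.
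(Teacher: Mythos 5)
Your proof is correct and follows essentially the same route as the paper's: reduce to $\sum_x \Delta h(x)\operatorname{sgn}(h(x))\mu(x)$ with $h=\Phi u-\Phi v$, separate the killing term, symmetrize the edge sum (you do the index swap by hand where the paper invokes Green's identity), and finish with the same sign case analysis. No gaps.
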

\begin{proof}
	Define $h\coloneqq \Phi  u-\Phi  v \in C(X)$.
	Since $\phi$ is strictly monotone increasing and $\phi(0)=0$
	\begin{equation}\label{eq:thm1_0}
	\operatorname{sgn}( u(x)- v(x))= \operatorname{sgn}(\phi( u(x))-\phi( v(x)))=\operatorname{sgn}(h(x))
	\end{equation}
	and, therefore,
	\begin{equation}\label{eq:thm1_1}
		(\phi(u(x))-\phi(v(x))) \operatorname{sgn}(u(x)-v(x)) = h(x)  \operatorname{sgn}(h(x))= \left| h(x) \right|\geq 0 \quad \forall\, x \in X.
	\end{equation}
	By the linearity of $\Delta$ and \eqref{eq:thm1_0} we get%definition of $h$ and $\operatorname{sgn}$, it is immediate to check that
	\begin{equation*}\label{eq:appB1}
		\sum_{\substack{x\in X\colon\\u(x)\neq v(x)}} \left(\Delta\Phi u(x)-\Delta  \Phi v(x)\right)\operatorname{sgn}(u(x)-v(x))\mu(x)=\sum_{x\in X} \Delta h(x)\operatorname{sgn}(h(x))\mu(x).
	\end{equation*}
	Since $G$ is finite, from the Green's identity, see \cite{haeseler2011generalized, keller2021graphs}, we get
	\begin{align*}
		\sum_{x\in X} \Delta h(x)\operatorname{sgn}(h(x))\mu(x)&=  \frac{1}{2}\sum_{x,y \in X}  w(x,y)\left(\operatorname{sgn}(h(x)) - \operatorname{sgn}(h(y)) \right)\left(h(x) - h(y)\right)\\
		&+ \sum_{x\in X} \kappa(x)	h(x) \operatorname{sgn}(h(x)).
	\end{align*}
	Combining the above identity with \eqref{eq:thm1_1}, we obtain 
	\begin{equation*}
		\sum_{x\in X} \Delta h(x)\operatorname{sgn}(h(x))\mu(x)\geq \frac{1}{2}\sum_{x,y \in X}  w(x,y)\left(\operatorname{sgn}(h(x)) - \operatorname{sgn}(h(y)) \right)\left(h(x) - h(y)\right).
	\end{equation*}
	
	Setting, for ease of notation,
	$$
	\Gamma(x,y)\coloneqq w(x,y)\left(\operatorname{sgn}(h(x))- \operatorname{sgn}(h(y))\right)\left(h(x) - h(y)\right)
	$$
	%Therefore,
	%$$
	%\sum_{\substack{x\in X\colon\\u(x)\neq v(x)}} \left(\Delta\Phi u(x)-\Delta \Phi v(x)\right)\operatorname{sgn}(u(x)-v(x))\mu(x) \geq  \frac{1}{2}\sum_{x,y \in X} \Gamma(x,y).
	%$$
	%Now observe that,
	we have:
	\begin{enumerate}[i)]
		\item If $\operatorname{sgn}(h(x))=0$, then $\Gamma(x,y)=w(x,y)|h(y)|\geq 0$;
		\item If $\operatorname{sgn}(h(y))=0$, then $\Gamma(x,y)=w(x,y)|h(x)|\geq 0$;
		\item If $\operatorname{sgn}(h(x))=\operatorname{sgn}(h(y))$, then $\Gamma(x,y)=0$;
		\item If $\operatorname{sgn}(h(y))=-\operatorname{sgn}(h(x))$,  then $\Gamma(x,y)= 2w(x,y)\left(|h(x)| + |h(y)|\right)\geq 0$.
	\end{enumerate}
	We obtain $\Gamma(x,y)\geq 0$ for every $x,y \in X$ and the required conclusion follows.
\end{proof}

We next show that $\mathcal{L}$ is accretive on finite graphs.

\begin{corollary}\label{cor:accretivityL-finite}
	Let $G=(X,w,\kappa,\mu)$ be a finite graph. Then, $\mathcal{L}$ is accretive.
\end{corollary}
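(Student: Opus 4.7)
The plan is to deduce accretivity on $\ell^1(X,\mu)$ directly from the semi-inner product characterization in Definition \ref{def:m-accretivity}\ref{m-accretivity2}, using Proposition \ref{prop:nonnegativity} which already contains the essential combinatorial inequality. Since $G$ is finite, we have $\dom(\mathcal{L})=C(X)$, so nothing needs to be said about the domain.

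First, I would reduce the task to showing $\langle \mathcal{L}u-\mathcal{L}v,\,u-v\rangle_+\ge 0$ for every $u,v\in C(X)$, which is equivalent to condition \ref{m-accretivity1} by Definition \ref{def:m-accretivity}. Then I would apply formula \eqref{accretivity_for_lp} of Remark \ref{rem:accretivity_l^2} in the case $p=1$ to the pair $z\coloneqq\mathcal{L}u-\mathcal{L}v=\Delta\Phi u-\Delta\Phi v$ and $k\coloneqq u-v$. If $u=v$, the inequality is trivial; otherwise $\|u-v\|_1>0$ and it suffices to check that
\begin{equation*}
\sum_{\substack{x\in X\colon\\ u(x)=v(x)}}\bigl|\Delta\Phi u(x)-\Delta\Phi v(x)\bigr|\mu(x)
\;+\;\sum_{\substack{x\in X\colon\\ u(x)\neq v(x)}}\bigl(\Delta\Phi u(x)-\Delta\Phi v(x)\bigr)\operatorname{sgn}(u(x)-v(x))\mu(x)\;\geq\;0.
\end{equation*}

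The first sum is manifestly nonnegative, and the second sum is exactly the quantity shown to be nonnegative in Proposition \ref{prop:nonnegativity}. Hence the total is nonnegative, $\langle \mathcal{L}u-\mathcal{L}v,u-v\rangle_+\geq 0$, and the accretivity of $\mathcal{L}$ follows.

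In effect, there is no real obstacle left: Proposition \ref{prop:nonnegativity} already isolates the delicate ingredient (handling the sign function together with the Green identity on the finite graph), and what remains for this corollary is only to recognize that the $\ell^1$ semi-inner product splits precisely into the ``$u=v$'' piece (trivially nonnegative) and the ``$u\neq v$'' piece (covered by the proposition). The only care required is to keep track of the convention $\operatorname{sgn}(0)=0$ in \eqref{eq:sgn}, which is what makes the two sums in \eqref{accretivity_for_lp} match the decomposition used in Proposition \ref{prop:nonnegativity}.
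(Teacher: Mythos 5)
Your argument is correct and is essentially identical to the paper's proof: both reduce accretivity to the semi-inner product condition, invoke the $p=1$ case of \eqref{accretivity_for_lp}, discard the manifestly nonnegative sum over $\{x : u(x)=v(x)\}$, and conclude by Proposition \ref{prop:nonnegativity}. No further comment is needed.
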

\begin{proof}
	By condition \ref{m-accretivity2} in Definition \ref{def:m-accretivity}, an operator $\mathcal{L}$ is accretive if $\langle \mathcal{L} u -\mathcal{L} v, u - v \rangle_+ \geq 0$ for every $u,v \in \dom\left(\mathcal{L}\right)$. From \eqref{accretivity_for_lp} in Remark \ref{rem:accretivity_l^2}, in the case of the $\ell^1$-norm we have
	\begin{align*}
		\langle z, k \rangle_+  &=
		\|k\|_1\left( \sum\limits_{\substack{x\in X\colon\\ k(x)= 0}}|z(x)|\mu(x) + \sum\limits_{\substack{x\in X\colon\\ k(x)\neq 0}} z(x)\operatorname{sgn}(k(x))\mu(x) \right)\\
		&\geq \|k\|_1\sum\limits_{\substack{x\in X\colon\\ k(x)\neq 0}} z(x)\operatorname{sgn}(k(x))\mu(x) \quad \forall\, z,k \in \ell^1(X,\mu).
	\end{align*}
	Therefore, to prove that $\mathcal{L}$ is accretive on $\ell^1(X,\mu)$, it is sufficient to prove that
	\begin{equation}\label{eq:m_accretivity}
		\sum_{\substack{x\in X\colon\\u(x)\neq v(x)}} \left(\mathcal{L} u(x)- \mathcal{L} v(x)\right)\operatorname{sgn}(u(x)-v(x))\mu(x) \geq 0 \quad \forall\; u, v \in \dom(\mathcal{L}),
	\end{equation}
	that is,
	\begin{equation*}\label{eq:appB2}
		\sum_{\substack{x\in X\colon\\u(x)\neq v(x)}} \left(\Delta\Phi u(x)-\Delta \Phi v(x)\right)\operatorname{sgn}(u(x)-v(x))\mu(x) \geq 0 \quad \forall\; u, v \in C(X).
	\end{equation*}
	Since $G$ is finite, we conclude the proof by Proposition \ref{prop:nonnegativity}.
\end{proof}

We next establish that the operators $\Lmin$ are accretive.
\begin{corollary}\label{cor:accretivity_lmin}
Let $G=(X,w,\kappa,\mu)$ be a graph.	Then, $\Lmin$ is accretive for every $n$.
\end{corollary}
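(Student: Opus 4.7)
The plan is to reduce the accretivity of $\mathcal{L}_n$ on the infinite graph $G$ to the accretivity already established for finite graphs (via Proposition \ref{prop:nonnegativity}) applied to the finite Dirichlet subgraph $G_{\textnormal{dir},n}$. The key structural observation is that, by Definition \ref{def:Lmin}, the function $\mathcal{L}_n u$ is always supported inside the finite set $X_n$, regardless of the support of $u \in C_c(X)$.

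Fix $u,v\in \dom(\mathcal{L}_n)=C_c(X)$. To verify accretivity it suffices to check condition \ref{m-accretivity2} of Definition \ref{def:m-accretivity}, namely $\langle \mathcal{L}_n u - \mathcal{L}_n v,\, u-v\rangle_+\geq 0$. Using the explicit expression \eqref{accretivity_for_lp} for $\langle \cdot,\cdot\rangle_+$ in the $\ell^1$ case, and setting $z\coloneqq \mathcal{L}_n u-\mathcal{L}_n v$ and $k\coloneqq u-v$, I need to show that
\[
\sum_{\substack{x\in X\colon\\ k(x)=0}}|z(x)|\mu(x)+\sum_{\substack{x\in X\colon\\ k(x)\neq 0}}z(x)\operatorname{sgn}(k(x))\mu(x)\ \geq\ 0.
\]
The first sum is manifestly nonnegative, so the whole work is in the second sum.

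Now I exploit the fact that $z=\boldsymbol{\mathfrak{i}}_{n,\infty}(\Delta_{\textnormal{dir},n}\Phi\boldsymbol{\pi}_n u-\Delta_{\textnormal{dir},n}\Phi\boldsymbol{\pi}_n v)$ vanishes on $X\setminus X_n$. Therefore the contribution to both sums coming from $x\notin X_n$ is zero, and the second sum collapses to
\[
\sum_{\substack{x\in X_n\colon\\ u(x)\neq v(x)}}\bigl(\Delta_{\textnormal{dir},n}\Phi\boldsymbol{\pi}_n u(x)-\Delta_{\textnormal{dir},n}\Phi\boldsymbol{\pi}_n v(x)\bigr)\operatorname{sgn}\bigl(\boldsymbol{\pi}_n u(x)-\boldsymbol{\pi}_n v(x)\bigr)\mu(x),
\]
since $\boldsymbol{\pi}_n$ is just the restriction to $X_n$ and $\mu=\mu_n$ on $X_n$.

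The last displayed sum is exactly the quantity appearing in Proposition \ref{prop:nonnegativity} applied to the \emph{finite} graph $G_{\textnormal{dir},n}=(X_n,w_n,\kappa_{\textnormal{dir},n},\mu_n)$ with the functions $\boldsymbol{\pi}_n u,\boldsymbol{\pi}_n v\in C(X_n)$. Since that proposition is valid on any finite graph (and $G_{\textnormal{dir},n}$ is finite by construction of the exhaustion), the sum is nonnegative. Combining with the nonnegativity of the first sum yields $\langle \mathcal{L}_n u-\mathcal{L}_n v,\,u-v\rangle_+\geq 0$, proving that $\mathcal{L}_n$ is accretive.

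There is no real obstacle: the only care needed is the bookkeeping between the projection/embedding operators $\boldsymbol{\pi}_n,\boldsymbol{\mathfrak{i}}_{n,\infty}$ and splitting the $\ell^1$ pairing sum into contributions inside and outside $X_n$, which is straightforward once one notes that $\mathcal{L}_n u$ has support in $X_n$.
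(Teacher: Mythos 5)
Your proof is correct and follows essentially the same route as the paper: reduce to the signed sum over $\{x: u(x)\neq v(x)\}$ via the $\ell^1$ formula \eqref{accretivity_for_lp}, observe that $\Lmin u-\Lmin v$ is supported in $X_n$ so the sum collapses to one over $X_n$, and invoke Proposition \ref{prop:nonnegativity} on the finite graph $G_{\textnormal{dir},n}$. The paper packages the first reduction as inequality \eqref{eq:m_accretivity} from Corollary \ref{cor:accretivityL-finite}, but the content is identical.
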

\begin{proof}
By \eqref{eq:m_accretivity} in Corollary \ref{cor:accretivityL-finite}, it suffices to show that  
	\begin{equation*}
		\sum_{\substack{x\in X\colon\\u(x)\neq v(x)}} \left(\Lmin u(x)- \Lmin v(x)\right)\operatorname{sgn}(u(x)-v(x))\mu(x) \geq 0 \quad \forall\; u, v \in \dom(\Lmin),
	\end{equation*}
	that is,
	\begin{equation*}%\label{eq:m_accretivity3}
		\sum_{\substack{x\in X\colon\\u(x)\neq v(x)}} \left(\boldsymbol{\mathfrak{i}}_{n,\infty}\Deltadirn\Phi\boldsymbol{\pi}_{n} u(x)-\boldsymbol{\mathfrak{i}}_{n,\infty}\Deltadirn \Phi\boldsymbol{\pi}_{n} v(x)\right)\operatorname{sgn}(u(x)-v(x))\mu(x) \geq 0 \quad \forall\; u, v \in C_c(X).
	\end{equation*}
	Let us observe that the left-hand side of the above % \eqref{eq:m_accretivity3} 
	is equal to
	\begin{align*}
		\sum_{\substack{x\in X_n\colon\\\boldsymbol{\pi}_{n}u(x)\neq \boldsymbol{\pi}_{n}v(x)}} \left(\Deltadirn\Phi\boldsymbol{\pi}_{n} u(x)-\Deltadirn \Phi\boldsymbol{\pi}_{n} v(x)\right)\operatorname{sgn}(\boldsymbol{\pi}_{n}u(x)-\boldsymbol{\pi}_{n}v(x))\mu(x).
	\end{align*}
	Since $\boldsymbol{\pi}_{n} u, \boldsymbol{\pi}_{n} v \in C(X_n)$ and $\Deltadirn$ is the graph Laplacian associated to the finite graph $G_{\textnormal{dir},n}$ with node set $X_n$, by Proposition \ref{prop:nonnegativity} we conclude that $\Lmin$ is accretive.
\end{proof}

The sequence of operators $\Lmin$ defines a subset $\Omega$ of $\dom(\mathcal{L})$. As we will see below, $\Omega$ is dense in $\dom(\mathcal{L})$ and $\mathcal{L}$ restricted to $\Omega$ is accretive. Let us introduce the following notation for the support of a function: Given $u \in C(X)$ we let
\begin{equation*}
\operatorname{supp}u \coloneqq \{x\in X \mid u(x)\neq 0\}.
\end{equation*}

We start by defining the subset of the domain of interest. 
\begin{definition}[The set $\Omega$]\label{def:Omega}
Let $G=(X,w,\kappa,\mu)$ be a graph. We define $\Omega \subseteq \dom(\mathcal{L})$ by letting
\begin{equation*}\label{eq:omega1}
	\Omega \coloneqq \dom(\mathcal{L}) = C(X)
\end{equation*}
if $G$ is finite and
	\begin{equation*}\label{eq:omega2}
		\Omega\coloneqq \{u \in \dom(\mathcal{L}) \mid \exists\, \{u_n\}_n \mbox{ s.t. } \operatorname{supp}u_n\subseteq X_n,\; \lim_{n\to \infty}\|u_n -u \|=0,\; \lim_{n\to \infty}\|\Lmin u_n -\mathcal{L}u \|=0 \}
	\end{equation*}
if $G$ is infinite. 
\end{definition}

While the definition of $\Omega$ depends on the choice of the exhaustion,
this set always contains all finitely supported functions as will be shown in Lemma \ref{lem:FC}. In order to establish this, we first prove that the finitely supported functions are contained in the domain of $\mathcal{L}$.

\begin{lemma}\label{lem:FC0}
Let $G=(X,w,\kappa,\mu)$ be a graph. Then, $C_c(X) \subseteq \dom(\mathcal{L})$.
\end{lemma}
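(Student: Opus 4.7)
The plan is straightforward: for $u \in C_c(X)$ we verify in turn each of the three conditions defining $\dom(\mathcal{L})$: membership in $\ell^1(X,\mu)$, membership of $\Phi u$ in $\dom(\Delta)$, and $\ell^1$-integrability of $\Delta\Phi u$. The key observation is that $\phi(0)=0$ forces $\Phi u$ to have the same (finite) support $F \coloneqq \operatorname{supp} u$ as $u$, which makes the first two conditions immediate: $u \in \ell^1(X,\mu)$ trivially, and for every $x\in X$ the series $\sum_{y\in X} w(x,y)|\Phi u(y)| = \sum_{y \in F} w(x,y)|\Phi u(y)|$ is a finite sum, so $\Phi u \in \dom(\Delta)$.

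The only step requiring a computation is checking that $\Delta\Phi u \in \ell^{1}(X,\mu)$. I would split the sum $\sum_{x\in X} |\Delta\Phi u(x)|\mu(x)$ into $x \in F$ and $x \in X\setminus F$. The sum over $x\in F$ is a finite sum of finite numbers by what was just proved. For $x \notin F$, the formula for the graph Laplacian simplifies to
$$\Delta\Phi u(x) = -\frac{1}{\mu(x)}\sum_{y \in F} w(x,y)\Phi u(y),$$
and therefore
$$\sum_{x \notin F} |\Delta\Phi u(x)|\mu(x) \leq \sum_{x \notin F}\sum_{y \in F} w(x,y)|\Phi u(y)| = \sum_{y \in F} |\Phi u(y)|\sum_{x \notin F} w(x,y).$$

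The main (mild) obstacle, which is easily dispatched, is controlling the inner sum $\sum_{x\notin F} w(x,y)$ for fixed $y\in F$: by the symmetry assumption \ref{assumption:symmetry} we have $w(x,y)=w(y,x)$, and by the finite sum assumption \ref{assumption:degree} we know $\sum_{x\in X} w(y,x) < \infty$. Hence each inner sum is finite and, since $F$ is finite, the entire double sum is finite, concluding that $\Delta\Phi u \in \ell^1(X,\mu)$ and thus $u \in \dom(\mathcal{L})$.
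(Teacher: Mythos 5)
Your proof is correct and follows essentially the same route as the paper: the paper merely reduces to the delta functions $\delta_z$ by linearity before carrying out the identical estimate, namely bounding the off-support contribution by $\sum_{x}w(x,z)$, which is finite by the symmetry assumption \ref{assumption:symmetry} together with \ref{assumption:degree}. Your direct splitting of the sum over $x\in F$ and $x\notin F$ is just a repackaging of the same computation, and all the auxiliary observations (finite support of $\Phi u$ via $\phi(0)=0$, the interchange of the two finite/nonnegative sums) are valid.
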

\begin{proof}
Let $u \in C_c(X)$. Then, $u(x) = \sum_{j=1}^n \alpha_j \delta_{x_j}(x)$ where $\alpha_j \in \R$ and 
$$
\delta_{x_j}(x)=\begin{cases}
	1 & \mbox{if } x=x_j,\\
	0 &\mbox{otherwise}.
\end{cases}
$$
Therefore, by linearity, $\Delta(C_c(X))\subseteq \ell^1(X,\mu)$ if and only if $\Delta\delta_{z} \in  \ell^1(X,\mu)$ for every $z\in X$. Fix $z \in X$ and observe that
\begin{align*}
	\sum_{x\in X}|\Delta \delta_{z}(x)|\mu(x) &\leq \sum_{x\in X}\Deg(x)\delta_{z}(x)\mu(x) + \sum_{x\in X}\sum_{y\in X}w(x,y)|\delta_{z}(y)|\\
	&= \Deg(z)\mu(z) + \sum_{x\in X}w(x,z) < \infty
\end{align*}
so that $\Delta(C_c(X)) \subseteq \ell^1(X,\mu)$.
Since $\Phi u \in C_c(X)$ for every $u \in C_c(X)$, it follows that $\Phi u\in \dom(\Delta)$ and $\Delta\Phi u \in \ell^1(X,\mu)$, that is, $u \in \dom(\mathcal{L}$).
\end{proof}

We now show that the set $\Omega$ contains the finitely supported functions.

\begin{lemma}\label{lem:FC}
Let $G=(X,w,\kappa,\mu)$ be a graph. Then, $C_c(X) \subseteq \Omega$. In particular, $$\overline{\Omega}= \overline{\dom(\mathcal{L})}=\ell^1(X,\mu).$$	 
\end{lemma}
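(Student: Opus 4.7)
The finite case is immediate from the definition: $\Omega = \dom(\mathcal{L}) = C(X) \supseteq C_c(X)$. So I will focus on the infinite case. Fix $u \in C_c(X)$; by Lemma~\ref{lem:FC0}, $u \in \dom(\mathcal{L})$. Since $\{X_n\}$ is an exhaustion by finite sets, $\operatorname{supp} u \subseteq X_N$ for some $N$. My plan is to take the ``eventually constant'' approximating sequence: set $u_n := 0$ for $n < N$ and $u_n := u$ for $n \geq N$. Then $\operatorname{supp} u_n \subseteq X_n$ for all $n$, and $\|u_n - u\| = 0$ for $n \geq N$, so two of the three conditions required to place $u$ in $\Omega$ are trivially satisfied.

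The remaining point is to verify $\lim_{n\to\infty}\|\Lmin u_n - \mathcal{L}u\| = 0$. For $n \geq N$, I will combine two ingredients. First, since $\phi(0)=0$ and $\operatorname{supp} u \subseteq X_n$, the identity $\boldsymbol{\mathfrak{i}}_{n,\infty} \Phi \boldsymbol{\pi}_n u = \Phi u$ holds pointwise on the whole of $X$. Second, Lemma~\ref{lem:A1} applied to the Dirichlet subgraph $G_{\textnormal{dir},n}$ gives, for every $x \in X_n$,
\[
\Deltadirn \Phi \boldsymbol{\pi}_n u(x) \;=\; \Delta \boldsymbol{\mathfrak{i}}_{n,\infty} \Phi \boldsymbol{\pi}_n u(x) \;=\; \Delta \Phi u(x) \;=\; \mathcal{L}u(x).
\]
Consequently, $\Lmin u_n = \boldsymbol{\mathfrak{i}}_{n,\infty}\Deltadirn\Phi\boldsymbol{\pi}_n u_n$ coincides with $\mathcal{L}u$ on $X_n$ and vanishes on $X \setminus X_n$, so
\[
\|\Lmin u_n - \mathcal{L}u\| \;=\; \sum_{x \notin X_n} |\mathcal{L}u(x)|\,\mu(x) \;\longrightarrow\; 0
\]
as $n \to \infty$, because $\mathcal{L}u \in \ell^1(X,\mu)$ and this is the tail of an absolutely convergent series. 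Hence $u \in \Omega$, which proves the inclusion $C_c(X) \subseteq \Omega$.

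The ``in particular'' statement then follows by taking closures in $\ell^1(X,\mu)$. One has the chain of inclusions $C_c(X) \subseteq \Omega \subseteq \dom(\mathcal{L}) \subseteq \ell^1(X,\mu)$, and $C_c(X)$ is dense in $\ell^1(X,\mu)$ by a standard truncation argument (approximate any $u \in \ell^1(X,\mu)$ by the sequence $u \cdot \mathds{1}_{X_n}$ and invoke dominated convergence). Taking closures throughout yields $\overline{\Omega} = \overline{\dom(\mathcal{L})} = \ell^1(X,\mu)$. There is no serious obstacle in this argument; the only subtlety is unwinding the definition of $\Lmin$ and keeping track of where the canonical embeddings and projections land, which is precisely what Lemma~\ref{lem:A1} is designed to handle.
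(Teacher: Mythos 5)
Your proof is correct and follows essentially the same route as the paper's: the same eventually-constant truncation sequence (the paper uses $u_n=\boldsymbol{\mathfrak{i}}_{n,\infty}\boldsymbol{\pi}_n u$, which coincides with yours for $n\geq N$), the same appeal to Lemma~\ref{lem:A1} to identify $\Lmin u_n$ with $\mathcal{L}u$ on $X_n$, and the same tail-of-an-$\ell^1$-series argument for convergence. No gaps.
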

\begin{proof}
Let us fix $u \in C_c(X)$. From Lemma \ref{lem:FC0}, we know that $u\in \dom(\mathcal{L})$. Define 
	$$
	u_n(x)\coloneqq \boldsymbol{\mathfrak{i}}_{n,\infty}\boldsymbol{\pi}_n u(x)= \begin{cases}
		u(x) & \mbox{if } x \in X_n,\\
		0 &\mbox{otherwise}.
	\end{cases}
	$$
	Clearly, $\operatorname{supp}u_n \subseteq X_n$ and $\lim_{n\to \infty}\|u_n -u \|=0$. Since, $u\in C_c(X)$, there exists an $N>0$ such that $u(x)=0$ for every $x\in X\setminus X_N$. In particular, $\Phi u_n(x)=0$ for every $x\in X\setminus X_n$, $n\geq N$ and $\Phi u_n=\Phi u$ for every $n\geq N$. 
	
	By Lemma \ref{lem:A1}, we have $\Phi u_n \in \dom(\Delta_{|X_n})$ for every $n\geq N$ and then 
	\begin{equation*}
		\Deltadirn \boldsymbol{\pi}_n \Phi u_n (x) =  \Delta\Phi u_n (x) =  \Delta\Phi u (x) \quad \forall\; x\in X_n,\; n\geq N,
	\end{equation*}
	that is,
	\begin{equation*}
		\Deltadirn \boldsymbol{\pi}_n \Phi u_n = \boldsymbol{\pi}_n\Delta \Phi u \quad \forall\; n\geq N.
	\end{equation*}
	Therefore, using the trivial fact that $\Phi\boldsymbol{\pi}_n=\boldsymbol{\pi}_n\Phi$,
	\begin{equation*}
		\Lmin u_n = \boldsymbol{\mathfrak{i}}_{n,\infty}\Deltadirn \Phi \boldsymbol{\pi}_n u_n=\boldsymbol{\mathfrak{i}}_{n,\infty}\Deltadirn \boldsymbol{\pi}_n \Phi  u_n= \boldsymbol{\mathfrak{i}}_{n,\infty}\boldsymbol{\pi}_n\Delta \Phi u = \boldsymbol{\mathfrak{i}}_{n,\infty}\boldsymbol{\pi}_n\mathcal{L} u, \quad \forall\; n\geq N
	\end{equation*}
	and, since $\mathcal{L}u \in \ell^1(X,\mu)$, it follows that $\lim_{n\to \infty}\| \Lmin u_n - \mathcal{L} u\|=0$
	by dominated convergence.
\end{proof}

%\begin{lemma}\label{lem:liminf-accretivity}
%Given a sequence $\{\mathcal{A}_n\}_n$ of accretive operators, then $\mathcal{A}_\infty$ is accretive. 
%\end{lemma}
%\begin{proof}
%The proof can be found in \cite[Proposition 2.18]{benilan1988evolution}, but for the convenience of the reader, we provide here the details. Let $(u,g),(v,f) \in \mathcal{G}(\mathcal{A}_\infty)$, and let $(u_n,g_n), (v_n,f_n)\in \mathcal{G}(\mathcal{A}_n)$ such that $(u_n,g_n) \to (u,g), (v_n,f_n) \to (v,f)$ as $n\to \infty$ in the norm topology of $E\times E$. Then, by the accretivity of every $\mathcal{A}_n$, it follows readily that
%\begin{equation*}
%\left\|(u - v) + \lambda \left(g - f\right) \right\|= \lim_{n\to \infty} \left\|(u_n - v_n) + \lambda \left(g_n - f_n\right) \right\|\geq \lim_{n\to \infty}\|u_n - v_n\|= \|u - v\|,
%\end{equation*}
%for every fixed $\lambda>0$.
%\end{proof}

To conclude this appendix, we prove that  $\mathcal{L}_{|\Omega}$ is accretive.

\begin{lemma}\label{lem:L_Omega_accretive}
Let $G=(X,w,\kappa,\mu)$ be a graph. Then, $\mathcal{L}_{|\Omega}$ is accretive.
\end{lemma}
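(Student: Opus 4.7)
The plan is to reduce the accretivity of $\mathcal{L}_{|\Omega}$ directly to the accretivity of the approximating operators $\Lmin$, which has already been established in Corollary~\ref{cor:accretivity_lmin}, together with the definition of $\Omega$ which is tailor-made for exactly this passage to the limit.

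First, dispose of the trivial case: if $G$ is finite, then $\Omega=\dom(\mathcal{L})=C(X)$ by Definition~\ref{def:Omega}, and Corollary~\ref{cor:accretivityL-finite} already asserts that $\mathcal{L}$ is accretive on $\ell^1(X,\mu)$, so there is nothing more to do. Assume henceforth that $G$ is infinite, so that
$$\Omega=\{u\in\dom(\mathcal{L})\mid \exists\,\{u_n\}_n\text{ with }\operatorname{supp}u_n\subseteq X_n,\; \|u_n-u\|\to 0,\;\|\Lmin u_n-\mathcal{L}u\|\to 0\}.$$

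Now fix $u,v\in\Omega$ and $\lambda>0$. By definition of $\Omega$, choose sequences $\{u_n\}_n$ and $\{v_n\}_n$ in $C_c(X)\subseteq\dom(\Lmin)$ with $\operatorname{supp}u_n,\operatorname{supp}v_n\subseteq X_n$, such that $u_n\to u$, $v_n\to v$ in $\ell^1(X,\mu)$ and $\Lmin u_n\to \mathcal{L}u$, $\Lmin v_n\to\mathcal{L}v$ in $\ell^1(X,\mu)$. Since each $\Lmin$ is accretive by Corollary~\ref{cor:accretivity_lmin}, applying condition~\ref{m-accretivity1} of Definition~\ref{def:m-accretivity} to $\Lmin$ yields
$$\|(u_n-v_n)+\lambda(\Lmin u_n-\Lmin v_n)\|\geq \|u_n-v_n\|\qquad\forall\,n\in\N.$$

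Finally, by the triangle inequality and the four convergences above, the left-hand side converges to $\|(u-v)+\lambda(\mathcal{L}u-\mathcal{L}v)\|$ while the right-hand side converges to $\|u-v\|$. Passing to the limit we obtain
$$\|(u-v)+\lambda(\mathcal{L}u-\mathcal{L}v)\|\geq\|u-v\|,$$
which is precisely condition~\ref{m-accretivity1} of Definition~\ref{def:m-accretivity} applied to $\mathcal{L}_{|\Omega}$. Since $u,v\in\Omega$ and $\lambda>0$ were arbitrary, $\mathcal{L}_{|\Omega}$ is accretive. No real obstacle arises: the whole content sits in Corollary~\ref{cor:accretivity_lmin} and the stability of the accretivity inequality under $\ell^1$-limits, which in turn is just the continuity of the norm.
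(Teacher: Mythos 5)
Your proof is correct and follows essentially the same route as the paper's: dispose of the finite case via Corollary~\ref{cor:accretivityL-finite}, then for infinite $G$ use the approximating sequences built into the definition of $\Omega$, apply the accretivity of $\Lmin$ from Corollary~\ref{cor:accretivity_lmin}, and pass to the limit using the continuity of the norm. No gaps.
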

\begin{proof}
If $G$ is finite, then $\Omega = \dom(\mathcal{L})$ and $\mathcal{L}$ is accretive by Corollary \ref{cor:accretivityL-finite}. If $G$ is infinite, let $u,v \in \Omega$. Then, by the definition of $\Omega$, there exists $\{u_n\}_n, \{v_n\}_n$ such that 
$$
\lim_{n\to \infty}\|u_n-u\|=\lim_{n\to \infty}\|v_n-v\|=0, \qquad \lim_{n\to \infty}\|\Lmin u_n-\mathcal{L}u\|=\lim_{n\to \infty}\|\Lmin v_n-\mathcal{L}v\|=0.
$$
By the accretivity of $\Lmin$ established in Corollary \ref{cor:accretivity_lmin} above, it follows readily that
\begin{align*}
	\left\|(u - v) + \lambda \left(\mathcal{L}u - \mathcal{L}v\right) \right\|= \lim_{n\to \infty} \left\|(u_n - v_n) + \lambda \left(\Lmin u_n - \Lmin v_n\right) \right\|\geq \lim_{n\to \infty}\|u_n - v_n\|= \|u - v\|.
\end{align*}
This completes the proof.
\end{proof}

\begin{remark}\label{rem:Omega_Lmin}
	One might be tempted to identify $\Omega$ with 
	$$
	\Omega'=\{u\in \dom(\mathcal{L})\mid \exists\, \{u_n\}_n \mbox{ s.t. } u_n\in C_c(X),\; \lim_{n\to \infty}\|u_n -u \|=0,\; \lim_{n\to \infty}\|\mathcal{L}_{\operatorname{min}} u_n -\mathcal{L}u \|=0 \}
	$$
	where $\mathcal{L}_{\operatorname{min}}$ is the minimal operator, that is, $\mathcal{L}_{\operatorname{min}}\coloneqq\mathcal{L}_{|C_c(X)}$. It is possible to show that $\mathcal{L}$ is accretive on $\Omega'$ but unfortunately $\Omega$ is not equal to $\Omega'$. In particular, the solutions that are constructed in the proof of Theorem \ref{thm:main1} may not belong to $\Omega'$. The main problem is that 
	$$
	| (\operatorname{id} + \lambda \mathcal{L}_{\operatorname{min}})u_n(x) - g(x)|=\begin{cases}
		0 &\mbox{if } x\in X_n,\\
		\sum_{{y\in X_n}}w(x,y)u_n(y) + g(x) &\mbox{if } x \in X\setminus X_n
	\end{cases}
	$$
	and then $\|(\operatorname{id} + \lambda \mathcal{L}_{\operatorname{min}})u_n - g\|$ does not necessarily tend to $0$. As a consequence, we cannot infer that $\lim_{n\to \infty}\|\mathcal{L}_{\operatorname{min}} u_n -\mathcal{L}u \|=0$.
\end{remark}

	%%%%%%%%%%%%%%%%%%%%%%%%%%%%%%%%%%%%%%%%%%%%
	\section*{Acknowledgments}
	Part of this work was carried out by the first and third authors while they were at the Department of Science and High Technology of the University of Insubria in Como, Italy.
The third author is financially supported by PSC-CUNY Awards, jointly funded by the Professional Staff Congress and the City University of New York, and the Collaboration Grant for Mathematicians, funded by the Simons Foundation. The authors would like to thank Delio Mugnolo for helpful comments and for pointing out references.
	%%%%%%%%%%%%%%%%%%%%%%%%%%%%%%%%%%%%%%%%%%%%%%%%%%%%%%%%%%%%%%%%%%%%%%%%%%%%%

\printbibliography
\end{document}